\newtheorem{lem}{Lemma}[section]
\newtheorem{prop}{Proposition}[section]
\newtheorem{cor}{Corollary}[section]
\newtheorem{thm}{Theorem}[section]
\theoremstyle{definition}
\newtheorem{definition}{Definition}[section]
\theoremstyle{remark}
\theoremstyle{remark}
\newtheorem{remark}{Remark}[section]
\numberwithin{equation}{section}
\newcommand{\C}{{\mathbb C}}
\newcommand{\N}{{\mathbb N}}
\newcommand{\R}{{\mathbb R}}
\begin{document}

\title{On dipolar quantum gases in the unstable regime}

\author[Jacopo Bellazzini]{Jacopo Bellazzini}
\address{Jacopo Bellazzini
\newline\indent
Universit\` a di Sassari
\newline\indent
Via Piandanna 4, 07100 Sassari, Italy}
\email{jbellazzini@uniss.it}

\author[Louis Jeanjean]{Louis Jeanjean}
\address{Louis Jeanjean
\newline\indent
Laboratoire de Math\'ematiques (UMR 6623)
\newline\indent
Universit\'{e} de Franche-Comt\'{e}
\newline\indent
16, Route de Gray 25030 Besan\c{c}on Cedex, France}
\email{louis.jeanjean@univ-fcomte.fr}

\begin{abstract}
We study the nonlinear Schr\"odinger equation arising in dipolar Bose-Einstein condensate in the  unstable regime.
Two cases are studied: the first when the system is free, the second when  
gradually a trapping potential  is added. In both cases we first focus on the existence and  stability/ instability properties  of standing waves. Our approach leads to the search of critical points of a constrained functional which is unbounded from below on the constraint. In the free case, by showing that the constrained functional has a so-called {\it mountain pass geometry}, we prove the existence of standing states with least energy, the ground states,  and show that any ground state is orbitally unstable. Moreover, when the system is free, we show that  small data in the energy space scatter  in all regimes,  stable and unstable.  In the second case, if  the trapping potential is small, we prove that two different kind of standing waves appears: one corresponds to a {\it topological local minimizer} of the constrained energy functional and it consists in ground states, the other is again of {\it mountain pass type} but now corresponds to excited states. We also prove that any ground state is a {\it topological local minimizer}. Despite the problem is mass supercritical and the functional unbounded from below, the standing waves associated to the set of ground states turn to be orbitally stable. Actually, from the physical point of view, the introduction of the trapping potential  stabilizes the system initially unstable. Related to this we observe that it also creates a gap in the ground  state energy level of the system.  In addition when the trapping potential is active  the presence of standing waves with arbitrary small norm does not permit  small data scattering. Eventually some asymptotic results are also given.
\end{abstract}

\maketitle

\section{Introduction}

In the recent years the so-called dipolar Bose-Einstein condensate, i.e a condensate made out of particles possessing
a permanent electric or magnetic dipole moment, have attracted much attention, see e.g. \cite{BaCa,BaCaWa,GlMaStHaMa, LaMeSaLePf, NaPeSa, PeSa, SSZL}. At temperature much smaller than the critical temperature it is well described by the wave function $\psi(x,t)$ whose evolution is governed by the three-dimensional (3D) Gross-Pitaevskii equation (GPE), see e.g.  \cite{BaCa,BaCaWa, SSZL, YiLo1, YiLo2},
\begin{equation}
\label{eq:evolution}
i h \frac{\partial \psi(x,t)}{\partial t} = - \frac{h^2}{2m}\nabla^2 \psi + W(x) \psi + U_0|\psi|^2 \psi + (V_{dip} \star |\psi|^2) \psi, \quad x \in \R^3, \quad t>0
\end{equation}
where $t$ is time, $x = (x_1,x_2,x_3)^T \in \R^3 $ is the Cartesian coordinates, $\star$ denotes the convolution, $h$ is the Planck constant, $m$ is the mass of a dipolar particle and $W(x)$ is an external trapping potential. In this paper we  shall consider a harmonic potential
$$W(x) = \frac{m}{2}\, a^2\, |x|^2$$
where $a$ is the trapping frequency. $U_0 = 4 \pi h^2 a_s /m$ describes the local interaction between dipoles in the condensate with $a_s$ the $s-$wave scattering length (positive for repulsive interaction and negative for attractive interaction). \medskip

The long-range dipolar interaction potential between two dipoles is given by
\begin{equation}
\label{eq:dipole}
V_{dip}(x) = \frac{\mu_0 \mu^2_{dip}}{4 \pi} \,  \frac{1 - 3 cos^2 (\theta)}{|x|^3}, \quad x \in \R^3
\end{equation}
where $\mu_0$ is the vacuum magnetic permeability, $\mu_{dip}$ is the permanent magnetic dipole moment and $\theta$ is the angle between the dipole axis and the vector $x$.  For simplicity we fix the dipole axis as the vector $(0,0,1)$.
The wave function is normalized according to
\begin{equation}
\label{eq:normalized}
\int_{\R^3} |\psi(x,t)|^2 dx = N
\end{equation}
where $N$ is the total number of dipolar particles in the dipolar BEC. \medskip \newline
This aim of this paper is twofold: first  to study the existence of stationary states for \eqref{eq:evolution} satisfying \eqref{eq:normalized} and their stability properties, second to understand how the presence of the external trapping potential influences  the dynamics of the system.  In order to simplify the mathematical analysis 
we rescale (\ref{eq:evolution}) into the following dimensionless GPE,
\begin{equation}
\label{eq:evolutionbis}
i  \frac{\partial \psi(x,t)}{\partial t} = - \frac{1}{2}\nabla^2 \psi + \frac{a^2}{2} |x|^2 \psi + \lambda_1 |\psi|^2 \psi + \lambda_2 (K \star |\psi|^2) \psi, \quad x \in \R^3, \quad t>0.
\end{equation}
The dimensionless long-range dipolar interaction potential $K(x)$ is given by
$$
K(x) =  \frac{1- 3 cos^2(\theta)}{|x|^3}, \quad x \in \R^3.
$$
The corresponding normalization is now
\begin{equation}
\label{eq:normalizedbis}
N(\psi(\cdot, t)):= ||\psi(\cdot, t)||^2_2 = \int_{\R^3}|\psi(x,t)|^2 dx = \int_{\R^3}|\psi(x,0)|^2 dx = 1
\end{equation}
and the physical parameters $(\lambda_1, \lambda_2)$, which describes the strengh of the two nonlinearities, are given in (\ref{eq:parameters}).
Note that despite the kernel $K$ is highly singular it defines a smooth operator. More precisely the operator $ u \rightarrow K \star u$ can be extended as a continuous operator on $L^p(\R^3)$ for all $1 < p < \infty$, see \cite[Lemma 2.1]{CMS}. Actually the local existence and uniqueness of solutions to  \eqref{eq:evolutionbis} has been proved in \cite{CMS}.\medskip

From now on we deal with (\ref{eq:evolutionbis}) under the condition (\ref{eq:normalizedbis}) and we focus on the case when $\lambda_1$ and $\lambda_2$ fulfills the following conditions
\begin{equation}\left\{\begin{matrix} \label{AS}
\lambda_1-\frac 43 \pi \lambda_2<0,\ &\mbox{ if }& \lambda_2>0;   \\
\lambda_1+\frac 83 \pi \lambda_2<0,\ &\mbox{ if }& \lambda_2<0.  \\
\end{matrix}\right.
\end{equation}
These conditions which, following the terminology introduced in \cite{CMS}, define the {\it unstable regime} corresponds to the Figure \ref{fig2}.

\begin{figure}\label{fig2}
	\begin{center}
	{\includegraphics[width=8cm,height=6cm]{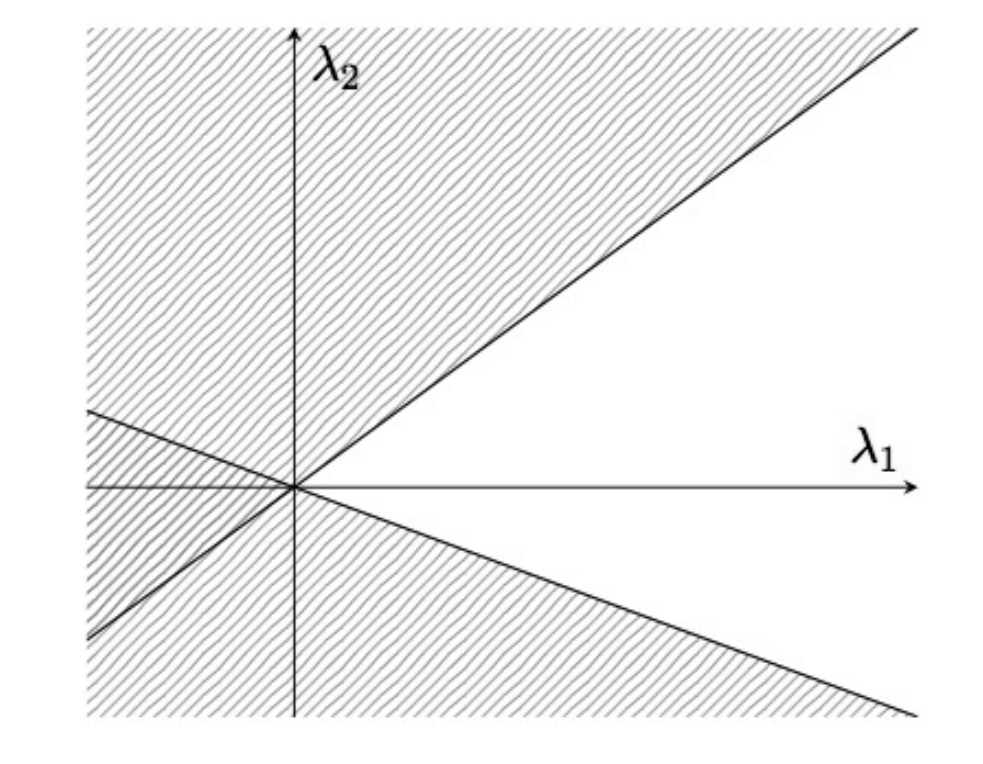}}
	\end{center}
\caption{The unstable regime given by \eqref{AS} is the dark region ouside the cone.}
\end{figure}	
\medskip

To find stationary states we make the ansatz 
\begin{equation}\label{ansatz}
\psi(x,t) = e^{-i \mu  t}u(x), \quad x \in \R^3
\end{equation}
where $\mu \in \R$ is the chemical potential and $u(x)$ is a time-independent function. Plugging (\ref{ansatz}) into (\ref{eq:evolutionbis}) we obtain the stationary equation
\begin{equation}
\label{eq:maina}
- \frac{1}{2}\Delta u + \frac{a^2}{2} |x|^2 u + \lambda_1 |u|^2 u + \lambda_2 (K \star  |u|^2) u + \mu u =0
\end{equation}
and the corresponding constraint $u \in S(1)$ where
\begin{equation}\label{constraint1}
S(1) = \{ u \in H^1(\R^3, \C) \ s.t. \ ||u||_2^2 =1\}.
\end{equation}
In the first part of the paper we consider the situation where the trapping potential is not active, namely when $a=0$. The corresponding stationary equation is then just
\begin{equation}\label{eq:main}
-\frac 12 \Delta u +\lambda_1|u|^2u+\lambda_2(K\star u^2)u+\mu u=0, \quad u \in H^1(\R^3, \C).
\end{equation}
We recall, see \cite{AS}, that the energy functional associated with \eqref{eq:main} is given by
\begin{equation}
\label{functional}
E(u):= \frac{1}{2}||\nabla u||_2^2  + \frac{\lambda_1}{2}||u||_4^4 + \frac{\lambda_2}{2} \int_{\R^3} (K\star |u|^2)|u|^2 dx.
\end{equation}
Any critical point of $E(u)$ constrained to $S(1)$ corresponds to a solution of \eqref{eq:main} satisfying \eqref{constraint1}. The parameter $\mu \in \R$ being then found as the Lagrange multiplier. \medskip

As we shall prove, under assumption \eqref{AS}, the functional $E(u)$ is unbounded from below on $S(1)$. Actually when \eqref{AS} is not satisfied, one speaks of the {\it stable regime}, the functional $E(u)$ is bounded from below on $S(1)$ and coercive, see \cite{BaCaWa, CaHa}. In that case one can prove that no standing waves exists, see Remark \ref{Jacopo}.
\medskip

The problem of finding solutions to (\ref{eq:main}) was first considered in \cite{AS}. In \cite[Theorem 1.1]{AS}, assuming (\ref{AS}), Antonelli and Sparber obtain, for any $\mu >0$, the existence of a real positive solution of (\ref{eq:main}), along with some symmetry, regularity and decay properties. To overcome the fact that $E(u)$ is unbounded from below on $S(1)$ they developped an approach in the spirit of 
Weinstein \cite{W}. Namely their solutions are obtained as minimizers of the following 
scaling invariant functional
\begin{equation}\label{def:w}
J(v):=\frac{||\nabla v||^3_2||v||_2}{-\lambda_1||v||_4^4-\lambda_2\int_{\R^3} (K\star |v|^2)|v|^2}.
\end{equation}
In \cite{AS} it is also shown that (\ref{AS}) are necessary and sufficient conditions to obtain a solution of (\ref{eq:main}). \medskip

In this paper we propose an alternative approach. We directly work with $E(u)$ restricted to $S(1)$. We obtain our solution as a {\it mountain pass} critical point, Despite the fact the energy is unbounded from below on $S(1)$, if we restrict to states  satisfying \eqref{constraint1}  that are stationary for the evolution equation \eqref{eq:evolutionbis}, then the energy is bounded from below by a  positive constant. We then show that this constant, corresponding to the mountain pass level, is reached and this will prove the existence of least energy states, also called ground states. As a direct consequence of this variational characterization and using a virial approach we manage to show that the associated standing waves are orbitally unstable. \medskip

 
Denoting the Fourier transform of $u$ by $\mathcal{F}(u):=\int_{\R^3} u(x)e^{-i x \cdot \xi}dx$, the Fourier transform of $K$ is given  by $$\hat K(\xi)=\frac{4}{3}\pi (\frac{2\xi_3^2-\xi_1^2-\xi_2^2}{|\xi|^2})\in [-\frac{4}{3}\pi, \frac{8}{3}\pi],$$
see \cite[Lemma 2.3]{CMS}. Then, thanks to 
the Plancherel identity, see for example \cite[Theorem 1.25]{BaChDa}, one gets 
\begin{equation}\label{def:Plancherel}
\lambda_1||v||_4^4+\lambda_2\int_{\R^3} (K\star |v|^2)|v|^2=\frac{1}{(2\pi)^3}\int_{\R^3} (\lambda_1+\lambda_2 \hat K(\xi))|\hat {v^2}|^2d\xi.
\end{equation}
Thus we can rewrite $E(u)$ as 
\begin{equation}
E(u)=\frac{1}{2}\int_{\R^3} |\nabla u|^2dx+\frac{1}{2}\frac{1}{(2\pi)^3}\int_{\R^3} (\lambda_1+\lambda_2 \hat K(\xi))|\hat {u^2}|^2d\xi.
\end{equation}
In order to  simplify the notation we define
$$A(u):=\int_{\R^3} |\nabla u|^2dx, \ \ B(u):= \frac{1}{(2\pi)^3}\int_{\R^3} (\lambda_1+\lambda_2 \hat K(\xi))|\hat {u^2}|^2d\xi.$$
$$Q(u):=\int_{\R^3} |\nabla u|^2dx+\frac{3}{2}\frac{1}{(2\pi)^3}\int_{\R^3} (\lambda_1+\lambda_2 \hat K(\xi))|\hat {u^2}|^2d\xi.$$
We also set $H:= H^1(\R^N, \C)$ and denotes by $||\cdot||$ the corresponding usual norm. \medskip

Despite the fact that we are primarily interested in solutions satisfying \eqref{constraint1}, for the mathematical treatment of the problem it is convenient to consider $E(u)$ on the set of constraints
$$S(c)=\left\{ u \in H \ s.t. \ ||u||_2^2=c\right\}.$$ 
Here $c>0$ and the case $c=1$ corresponds to the normalization \eqref{constraint1}. Given $c>0$ we shall prove that $E(u)$ has a mountain pass geometry on $S(c)$, see \cite{Gh} for a  definition.  More precisely we prove that there exists a $k>0$ such that 
\begin{equation}\label{gamma}
\gamma(c) := \inf_{g \in \Gamma(c)} \max_{t\in [0,1]}E(g(t)) > \max \{\max_{g \in \Gamma(c)}E(g(0)), \max_{g \in \Gamma(c)}E(g(1))\}
\end{equation}
holds in the set
\begin{equation}\label{Gamma}
\Gamma(c) =\{g \in C([0,1],S(c))  \ s.t. \ g(0) \in A_{k},E(g(1))<0\},
\end{equation}
where $$A_{k}= \{u \in S(c) \ s.t. \ \left \| \triangledown u \right \|_2^2\leq k\}.$$


It it standard, see for example \cite[Theorem 3.2]{Gh}, that the mountain pass geometry induces the existence of a Palais-Smale sequence at the level $\gamma(c)$. Namely a sequence $(u_n) \subset S(c)$ such that
$$E(u_n)=\gamma(c)+o(1), \ \ \ ||E'|_{S(c)}(u_n)||_{H^{-1}}=o(1).$$
If one can show in addition the compactness of $(u_n)$, namely that up to a subsequence,  $u_n \rightarrow u$ in $H$, then a critical point is found at the level $\gamma(c)$. Actually under the assumptions of \cite{AS}, in the unstable regime, we are able to prove the following 

\begin{thm}\label{thm:standing}
Let $c>0$ and assume that \eqref{AS} holds. Then $E(u)$ has a {\it mountain pass geometry} on $S(c)$ and there exists a couple $(u_c, \mu_c)\in H \times \R^{+}$ solution of \eqref{eq:main} with 
$||u_c||_2^2=c$ and $E(u_c)=\gamma(c)$. In addition $u_c \in S(c)$ is a ground state. 
\end{thm}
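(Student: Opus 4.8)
The plan is to verify that $E$ has the mountain pass geometry \eqref{gamma} on $S(c)$, to identify the level $\gamma(c)$ with a minimization over the set $\{Q=0\}$, to produce a bounded Palais--Smale sequence carrying the extra information $Q(u_n)\to0$, and finally to recover strong compactness by excluding vanishing and dichotomy. Throughout I use the $L^2$--preserving dilation $u^t(x):=t^{3/2}u(tx)$, for which $\|u^t\|_2=\|u\|_2$, $A(u^t)=t^2A(u)$ and $B(u^t)=t^3B(u)$, so that $E(u^t)=\tfrac12 t^2A(u)+\tfrac12 t^3B(u)$ and $\tfrac{d}{dt}E(u^t)=t^{-1}Q(u^t)$. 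For the geometry, Gagliardo--Nirenberg and the bound $|\lambda_1+\lambda_2\hat K(\xi)|\le M$ give $|B(u)|\le M\|u\|_4^4\le C\sqrt c\,A(u)^{3/2}$ on $S(c)$, whence $E(u)\ge\tfrac12 A(u)-\tfrac{C\sqrt c}{2}A(u)^{3/2}$, which is positive and bounded away from $0$ on $\{A(u)=k\}$ for $k$ small, so $E>0$ on $A_k$. On the other hand \eqref{AS} forces $\lambda_1+\lambda_2\hat K(\xi)<0$ on a set of positive measure, so there is $u\in S(c)$ with $B(u)<0$; then $E(u^t)\to-\infty$ as $t\to\infty$, producing paths in $\Gamma(c)$. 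Together these give \eqref{gamma} and $\gamma(c)>0$.

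Next I would show $\gamma(c)=d(c):=\inf\{E(u)\ :\ u\in S(c),\ Q(u)=0\}$. Any $g\in\Gamma(c)$ starts in $A_k$, where $Q(u)=A(u)+\tfrac32 B(u)>0$ for $k$ small, and ends where $E(g(1))<0$ forces $B<-A$ and hence $Q<0$; by continuity $Q(g(t_\ast))=0$ for some $t_\ast$, and there $E=\tfrac16 A\ge d(c)$, giving $\gamma(c)\ge d(c)$. Conversely, if $Q(u)=0$ then $B(u)=-\tfrac23 A(u)<0$, the curve $t\mapsto E(u^t)$ attains its maximum at $t=1$ with value $E(u)=\tfrac16 A(u)$, and $E(u^t)\to0^+$ as $t\to0$, $E(u^t)\to-\infty$ as $t\to\infty$; reparametrising yields a path in $\Gamma(c)$ of maximal energy $E(u)$, so $\gamma(c)\le d(c)$. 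The mass--dilation $v(x)=c^{-1}u(c^{-1}x)$ maps $S(1)$ onto $S(c)$ with $E(v)=c^{-1}E(u)$ and $Q(v)=c^{-1}Q(u)$, so $\gamma(c)=\gamma(1)/c$; in particular $\gamma$ is strictly decreasing, $\gamma(c)<\gamma(m)$ for $0<m<c$.

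To obtain a good Palais--Smale sequence I would apply the minimax principle to the augmented functional $\tilde E(u,s):=E(u^{e^s})$ on $S(c)\times\R$, whose mountain pass geometry is inherited from that of $E$ and whose level is again $\gamma(c)$; the standard deformation then yields, after applying the dilation, a sequence $(u_n)\subset S(c)$ with $E(u_n)\to\gamma(c)$, $\|E'|_{S(c)}(u_n)\|_{H^{-1}}\to0$ and, crucially, $Q(u_n)\to0$. Since $\tfrac12 A(u_n)=3E(u_n)-Q(u_n)\to3\gamma(c)$, the sequence is bounded in $H$.

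The core difficulty is compactness, as \eqref{eq:main} is translation invariant. From $B(u_n)=2E(u_n)-A(u_n)\to-4\gamma(c)<0$ and $|B(u_n)|\le M\|u_n\|_4^4$ I get $\|u_n\|_4$ bounded below, so Lions' lemma rules out vanishing and, after translation, $u_n\rightharpoonup u_c\ne0$. Passing to the limit (the multipliers $\mu_n$ being bounded) shows $u_c$ solves \eqref{eq:main} with some $\mu_c$, and testing with $u_c$ together with its Pohozaev identity $Q(u_c)=0$ gives $\mu_c=A(u_c)/(6\|u_c\|_2^2)>0$. Writing $r_n:=u_n-u_c$, the main technical step — and the part I expect to be delicate — is a Brezis--Lieb splitting $A(u_n)=A(u_c)+A(r_n)+o(1)$ and $B(u_n)=B(u_c)+B(r_n)+o(1)$; for the nonlocal term I would prove it from the Fourier representation of $B$ by showing the cross terms $\int(\lambda_1+\lambda_2\hat K)\,\mathrm{Re}\big(\widehat{u_c^2}\,\overline{\widehat{2u_cr_n+r_n^2}}\big)$ vanish along $r_n\rightharpoonup0$. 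Granting this, with $m:=\|u_c\|_2^2$ one gets $Q(r_n)\to0$, hence $E(r_n)=\tfrac16 A(r_n)+\tfrac13 Q(r_n)\to\tfrac16\lim A(r_n)\ge0$, and since $E(u_c)\ge\gamma(m)$ the splitting yields $\gamma(c)=E(u_c)+\lim E(r_n)\ge\gamma(m)=\gamma(1)/m$; strict monotonicity of $\gamma$ then forbids $0<m<c$, forcing $m=c$. Consequently $\|r_n\|_2\to0$, so $B(r_n)\to0$ and $A(r_n)=Q(r_n)-\tfrac32 B(r_n)\to0$, i.e. $u_n\to u_c$ in $H$, giving $u_c\in S(c)$, $E(u_c)=\gamma(c)$ and $\mu_c>0$. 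Finally $u_c$ is a ground state: every solution of \eqref{eq:main} on $S(c)$ satisfies $Q=0$ and hence has energy $\ge d(c)=\gamma(c)$, which $u_c$ attains.
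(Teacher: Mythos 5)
Your overall strategy coincides with the paper's: mountain pass geometry, identification of $\gamma(c)$ with $\inf\{E(u): u\in S(c),\,Q(u)=0\}$, a Palais--Smale sequence with the extra information $Q(u_n)\to 0$, and compactness via splitting plus strict monotonicity of $c\mapsto\gamma(c)$. Two of your ingredients differ in a worthwhile way. First, for the monotonicity you observe that the dilation $v(x)=c^{-1}u(c^{-1}x)$ maps $S(1)$ onto $S(c)$ with $E(v)=c^{-1}E(u)$ and $Q(v)=c^{-1}Q(u)$ (this computation is correct, using the degree-zero homogeneity of $\hat K$), so that $\gamma(c)=\gamma(1)/c$ exactly; the paper instead proves only the strict inequality $\gamma(c_1)>\gamma(c)$ for $c_1<c$ via the formula $\max_{t>0}E(u^t)=\tfrac{2}{27}A(u)^3/B(u)^2$ and the scaling $u_\theta(x)=\theta^{-1/2}u(x/\theta)$. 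Your exact scaling law is cleaner and gives the same conclusion. Second, you produce the good Palais--Smale sequence through the augmented functional $\widetilde E(u,s)=E(u^{e^s})$ on $S(c)\times\R$; the paper instead localizes the sequence around $V(c)$ using Ghoussoub's minimax theorem, and explicitly remarks (in the Appendix, where it uses your approach for $E_a$) that either method works. The compactness argument you sketch — nonvanishing from $\|u_n\|_4^4\gtrsim -B(u_n)\to 4\gamma(c)>0$, weak limit solving the equation with $Q(u_c)=0$ and $\mu_c>0$, Brezis--Lieb splitting of $A$ and $B$, and exclusion of $m<c$ by monotonicity — is the same as the paper's Step~5.

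There is one concrete gap, in the geometry step. From \eqref{AS} you infer that $\lambda_1+\lambda_2\hat K(\xi)<0$ on a set of positive measure and conclude ``so there is $u\in S(c)$ with $B(u)<0$.'' This implication is not automatic: $\widehat{u^2}$ is not an arbitrary $L^2$ function that you can support inside the cone where the symbol is negative, since $u^2\ge 0$ forces $\widehat{u^2}(0)=\|u\|_2^2>0$ and $|\widehat{u^2}(\xi)|\le\widehat{u^2}(0)$ everywhere. One must actually concentrate the mass of $|\widehat{u^2}|^2$ in the cone $\{2\xi_3^2<\xi_1^2+\xi_2^2\}$ (when $\lambda_2>0$). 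The paper does this with the anisotropic, $L^2$-preserving dilation $v^t(x)=t^{5/4}v(tx_1,tx_2,t^{1/2}x_3)$, for which $\widehat{(v^t)^2}(\xi)=\widehat{v^2}(\xi_1/t,\xi_2/t,\xi_3/\sqrt t)$ and dominated convergence gives $E(v^t)\to-\infty$ as $t\to\infty$ (with the roles of the variables exchanged when $\lambda_2<0$). Without this or an equivalent construction, the non-emptiness of $\Gamma(c)$ — and hence the whole mountain pass setup — is not established. Once you insert this construction, your isotropic dilation $u^t$ then gives $E(u^t)\to-\infty$ from $B(u)<0$ exactly as you state, and the rest of your argument goes through.
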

Since our definition of ground states does not seem to be completely standard we now precise it.
\begin{definition}\label{ground-state}
Let $c>0$ be arbitrary, we say that $u_c \in S(c)$ is a ground state if
$$E(u_c) = \inf \{E(u) \ s.t. \ u \in S(c), E'|_{S(c)}(u) =0\}.$$
\end{definition}
Namely a solution $u_c \in S(c)$ of \eqref{eq:main} is a ground state if it minimize the energy functional $E(u)$ among all the solutions of \eqref{eq:main} which belong to $S(c)$. We point out that with Definition \ref{ground-state} a ground state may exists even if $E(u)$ is unbounded from below on $S(c)$.

\begin{remark}
To prove that a Palais-Smale sequence converges a first step is to show that it is bounded and this is not given for free for $E(u)$. Note also that, due to the dipolar term, our functional is not invariant by rotations. This lack of symmetry also make delicate to prove the compactness of the  sequences.   To overcome both difficulties we shall prove the existence of one specific Palais-Smale sequence that fulfill $Q(u_n)=o(1).$ This localization property which follows the original ideas of \cite{Gh}, provides a direct proof of the  $H$ boundedness
of the sequence and also, after some work,  of its compactness.
\end{remark}

\begin{remark}\label{comparaison}
Theorem \ref{thm:standing} is in the spirit of some recent works \cite{BJL,JeLuWa} in which constrained critical points are obtained for functionals unbounded from below on the constraint. We also refer to \cite{NoTaVe} for a closely related problem.
\end{remark}
To prove Theorem \ref{thm:standing} we establish that
$$\gamma(c) = \inf_{u \in V(c)}E(u)$$
where 
$$V(c)=\left\{ u \in S(c) \ s.t. \ Q(u)=0  \right\}.$$
As we shall see  $V(c)$ contains all the critical points of $E(u)$ restricted to $S(c)$. Actually we also have
\begin{lem}\label{naturalconstraint}
Let $c>0$ be arbitrary, then $V(c)$ is a natural constraint, i.e  each critical point of $E_{|_{V(c)}}$ is a critical point of $E_{|_{S(c)}}$.
\end{lem}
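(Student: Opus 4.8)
The plan is to argue by Lagrange multipliers and to show that the multiplier attached to the constraint $Q=0$ must vanish; the whole argument is driven by the $L^2$-preserving dilation. For $u\in H$ and $t>0$ set $u_t(x):=t^{3/2}u(tx)$, so that $\|u_t\|_2^2=\|u\|_2^2$ and the curve $t\mapsto u_t$ stays on $S(c)$. A direct change of variables, using that the kernel $K$ is homogeneous of degree $-3$, gives the scaling laws
\[ A(u_t)=t^2A(u),\qquad B(u_t)=t^3B(u), \]
hence $E(u_t)=\tfrac12 t^2A(u)+\tfrac12 t^3B(u)$ and $Q(u_t)=t^2A(u)+\tfrac32 t^3B(u)$. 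The key observation is that $Q$ is exactly the generator of this flow: differentiating at $t=1$ yields $\frac{d}{dt}E(u_t)\big|_{t=1}=A(u)+\tfrac32 B(u)=Q(u)$. Writing $\phi:=\frac{d}{dt}u_t\big|_{t=1}=\tfrac32 u+x\cdot\nabla u$ for the infinitesimal dilation, this reads $\langle E'(u),\phi\rangle=Q(u)$, and likewise $\langle \Psi'(u),\phi\rangle=0$ with $\Psi(u):=\|u\|_2^2$ (because the flow preserves the mass), while $\langle Q'(u),\phi\rangle=\frac{d}{dt}Q(u_t)\big|_{t=1}=2A(u)+\tfrac92 B(u)$.

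Now let $u\in V(c)$ be a critical point of $E|_{V(c)}$. The set $V(c)$ is cut out of $H$ by the two constraints $\Psi=c$ and $Q=0$; I would first check that their differentials $\Psi'(u)=2u$ and $Q'(u)$ are linearly independent, so that $V(c)$ is a $C^1$ manifold near $u$ and the Lagrange multiplier rule applies. Independence is itself a byproduct of the computation above: $\langle \Psi'(u),\phi\rangle=0$ while, as shown below, $\langle Q'(u),\phi\rangle\neq 0$, so $Q'(u)$ cannot be proportional to $\Psi'(u)$. The multiplier rule then furnishes $\mu,\nu\in\R$ with
\[ E'(u)=\mu\,\Psi'(u)+\nu\,Q'(u). \]

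It remains to show $\nu=0$, and testing this identity against the dilation $\phi$ does exactly that. On the left, $\langle E'(u),\phi\rangle=Q(u)=0$ since $u\in V(c)$. On the right, $\langle \Psi'(u),\phi\rangle=0$, so the identity collapses to $0=\nu\,\langle Q'(u),\phi\rangle$. Finally $\langle Q'(u),\phi\rangle=2A(u)+\tfrac92 B(u)$, and the constraint $Q(u)=A(u)+\tfrac32 B(u)=0$ gives $A(u)=-\tfrac32 B(u)$, whence
\[ \langle Q'(u),\phi\rangle=2A(u)+\tfrac92 B(u)=\tfrac32 B(u). \]
Since $A(u)=\|\nabla u\|_2^2>0$ forces $B(u)=-\tfrac23A(u)<0$, this quantity is strictly negative, in particular nonzero. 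Therefore $\nu=0$, and $E'(u)=\mu\,\Psi'(u)$, i.e. $u$ is a critical point of $E|_{S(c)}$, as claimed.

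The main obstacle is not this algebra but its rigorous justification, since for a general $u\in H^1(\R^3)$ the dilation direction $\phi=\tfrac32 u+x\cdot\nabla u$ need not belong to $H$ (it requires $|x|\,\nabla u\in L^2$). I would circumvent this by noting that all the pairings used above are defined intrinsically as the $t$-derivatives at $t=1$ of the explicit polynomials $E(u_t),\Psi(u_t),Q(u_t)$, and by upgrading the regularity and decay of the constrained critical point $u$ — via elliptic regularity applied to the Euler--Lagrange equation produced by the multiplier rule, in the spirit of \cite{AS} — so that the virial identity encoded in $\langle E'(u),\phi\rangle=Q(u)$ holds in the classical sense. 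Verifying the constraint-gradient independence and this regularity step are the delicate points; the vanishing of $\nu$ then follows from the sign of $B(u)$ exactly as above.
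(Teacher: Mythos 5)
Your proof is correct and is essentially the paper's argument in different clothing: the paper also applies the Lagrange multiplier rule and then eliminates the multiplier in front of $Q'$ by combining the identity obtained from multiplying the Euler--Lagrange equation by $u$ with the Pohozaev identity — and that particular linear combination is exactly your pairing against the dilation generator $\tfrac32 u + x\cdot\nabla u$, leading to the same conclusion $\mu_1 A(u)=0$ (equivalently $\nu\,\tfrac32 B(u)=0$) with $A(u)>0$. The technical caveat you raise about $\phi$ not lying in $H$ is resolved in the paper precisely by invoking the Pohozaev identity for the Euler--Lagrange equation rather than testing against $\phi$ directly, which is the fix you propose.
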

Let us denote the set of minimizers of $E(u)$ on $V(c)$ as
\begin{eqnarray}\label{minimizerset}
\mathcal{M}_c := \{u_c\in V(c) \ s.t. \ \ E(u_c)=\inf_{u\in V(c)}E(u)\}.
\end{eqnarray}
\begin{lem}\label{description}
Let  $c>0$ be arbitrary, then
\begin{enumerate}
  \item [(i)] If $u_c \in \mathcal{M}_c$ then also $|u_c| \in \mathcal{M}_c$ .
  \item [(ii)] Any minimizer $u_c \in\mathcal{M}_c$ has the form $e^{i\theta}|u_c|$ for some $\theta \in \mathbb{S}^1$ and $|u_c(x)| >0 $ a.e. on $\R^3$.
\end{enumerate}
\end{lem}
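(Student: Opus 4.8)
The plan is to exploit two structural features of the functional: the quadratic form $B$ depends on $u$ only through $|u|$, whereas the kinetic term can only decrease under $u\mapsto|u|$. Since $B(u)=\lambda_1\|u\|_4^4+\lambda_2\int_{\R^3}(K\star|u|^2)|u|^2\,dx$ is a functional of $|u|^2$, we have $B(|u|)=B(u)$ for every $u\in H$; and by the diamagnetic inequality $|\nabla|u||\le|\nabla u|$ a.e., so $A(|u|)\le A(u)$. Moreover, on $V(c)$ the constraint $Q(u)=A(u)+\tfrac32 B(u)=0$ gives $B(u)=-\tfrac23 A(u)$, hence
\[
E(u)=\tfrac12 A(u)+\tfrac12 B(u)=\tfrac16 A(u)\qquad\text{for all }u\in V(c),
\]
with $A(u)>0$ (else $u\equiv0\notin S(c)$) and therefore $B(u)<0$ on $V(c)$. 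In particular minimizing $E$ over $V(c)$ is the same as minimizing the kinetic energy $A$.

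For (i), let $u_c\in\mathcal{M}_c$ and put $w=|u_c|$. Then $w\in S(c)$, $B(w)=B(u_c)<0$, and using $A(w)\le A(u_c)$ together with $Q(u_c)=0$ (so $\tfrac32 B(u_c)=-A(u_c)$) we get $Q(w)=A(w)+\tfrac32 B(u_c)=A(w)-A(u_c)\le0$. I claim equality holds. Consider the $L^2$-preserving scaling $w_t(x)=t^{3/2}w(tx)$, for which $A(w_t)=t^2A(w)$, $B(w_t)=t^3B(w)$, and $Q(w_t)=t^2\bigl(A(w)+\tfrac32 t\,B(w)\bigr)$. If $Q(w)<0$, then $A(w)+\tfrac32 t\,B(w)$ tends to $A(w)>0$ as $t\to0^+$ and equals $Q(w)<0$ at $t=1$, so there is $t^\ast\in(0,1)$ with $w_{t^\ast}\in V(c)$; but then $E(w_{t^\ast})=\tfrac16(t^\ast)^2A(w)<\tfrac16 A(w)\le\tfrac16 A(u_c)=E(u_c)=\inf_{V(c)}E$, a contradiction. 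Hence $Q(w)=0$, i.e. $w\in V(c)$, and $Q(w)=A(w)-A(u_c)=0$ forces $A(|u_c|)=A(u_c)$ and $E(|u_c|)=\tfrac16 A(|u_c|)=E(u_c)$. Thus $|u_c|\in\mathcal{M}_c$, and along the way the equality $A(|u_c|)=A(u_c)$ in the diamagnetic inequality is recorded for later use.

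For (ii), I first promote $|u_c|$ to a genuine solution: being a minimizer it is a critical point of $E_{|_{V(c)}}$, hence by Lemma \ref{naturalconstraint} a critical point of $E_{|_{S(c)}}$, so $|u_c|\ge0$ solves \eqref{eq:main} for some $\mu\in\R$, namely $-\tfrac12\Delta|u_c|+P\,|u_c|=0$ with $P:=\lambda_1|u_c|^2+\lambda_2(K\star|u_c|^2)+\mu$. Since $|u_c|\in H^1\hookrightarrow L^6$ one has $P\in L^3_{\mathrm{loc}}$, so elliptic regularity renders $|u_c|$ continuous and, as the potential sits in $L^p_{\mathrm{loc}}$ with $p=3>3/2$, the strong maximum principle (Harnack) applies to the nonnegative nontrivial solution $|u_c|$ and yields $|u_c|(x)>0$ for all $x$. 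Finally, writing $u_c=|u_c|e^{i\phi}$ — licit because $|u_c|$ is continuous, strictly positive, and $\R^3$ is simply connected — the pointwise identity $|\nabla u_c|^2=|\nabla|u_c||^2+|u_c|^2|\nabla\phi|^2$, integrated over $\R^3$, reads $A(u_c)=A(|u_c|)+\int_{\R^3}|u_c|^2|\nabla\phi|^2$. The equality $A(u_c)=A(|u_c|)$ from (i) and $|u_c|>0$ then give $\nabla\phi\equiv0$, so $\phi\equiv\theta$ is constant and $u_c=e^{i\theta}|u_c|$.

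The main obstacle is part (ii): passing from the \emph{energy} equality $A(|u_c|)=A(u_c)$ to the \emph{pointwise} rigidity $u_c=e^{i\theta}|u_c|$. This requires both the analysis of the equality case in the diamagnetic inequality and the strict positivity of $|u_c|$ — without the latter the phase $\phi$ is not globally defined — which in turn rests on upgrading $|u_c|$ to an actual solution via Lemma \ref{naturalconstraint} and applying the strong maximum principle to an equation whose potential is only $L^3_{\mathrm{loc}}$.
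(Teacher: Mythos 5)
Your proof is correct and follows essentially the same route as the paper: part (i) is the same diamagnetic-inequality-plus-rescaling argument (the paper produces $t_0\in(0,1]$ with $Q(|u_c|^{t_0})=0$ and forces $t_0=1$, which is your contradiction rephrased), and part (ii) uses the same chain of Lemma \ref{naturalconstraint}, elliptic regularity, the strong maximum principle, and the equality case in the diamagnetic inequality. The only difference is that you write out the final polar-decomposition rigidity step explicitly, whereas the paper delegates it to Theorem 4.1 of \cite{HAST}.
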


In view of Lemma \ref{description} each element of $\mathcal{M}_c$ is a real positive function multiply by a constant complex factor.
\vspace{1mm}

Our next result connects the solutions found in \cite{AS} with the ones of Theorem \ref{thm:standing}.

\begin{thm}\label{thm:AS}
Let $v \in H$ be, for some $\mu >0$, the solution obtained in \cite[Theorem 1.1]{AS}. Then setting $c = ||v||_2^2$ we have that $E(v) = \gamma(c)$.
\end{thm}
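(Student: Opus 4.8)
The plan is to exploit the identity $\gamma(c) = \inf_{u \in V(c)} E(u)$ established in the proof of Theorem~\ref{thm:standing}, together with the scale invariance of the functional $J$ from \cite{AS}. Since $v$ solves \eqref{eq:main}, it is a constrained critical point of $E$ on $S(c)$ with $c = \|v\|_2^2$, and hence $v \in V(c)$ because $V(c)$ contains all such critical points; this already gives $E(v) \ge \gamma(c)$. The whole point is therefore to obtain the reverse inequality, and this will follow from a direct computation relating $E$ on $V(c)$ to $J$.

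First I would record the behaviour of $A$, $B$ and $Q$ under the $L^2$-preserving scaling $u_t(x) := t^{3/2} u(tx)$, for which $\|u_t\|_2 = \|u\|_2$, $A(u_t) = t^2 A(u)$ and, using that $\hat K$ is homogeneous of degree $0$ together with the Fourier scaling of $\widehat{u^2}$, $B(u_t) = t^3 B(u)$. Consequently $Q(u_t) = t^2 A(u) + \tfrac32 t^3 B(u)$, which vanishes at the unique value $t_0 = -2A(u)/(3B(u)) > 0$ precisely when $B(u) < 0$. On the constraint $V(c)$ one has $Q(u) = 0$, i.e.\ $B(u) = -\tfrac23 A(u)$, whence $E(u) = \tfrac12 A(u) + \tfrac12 B(u) = \tfrac16 A(u)$. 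Substituting $B(u) = -\tfrac23 A(u)$ into $J(u)^2 = A(u)^3 \|u\|_2^2 / B(u)^2$ then yields, for every $u \in V(c)$,
\begin{equation*}
E(u) = \frac16 A(u) = \frac{2}{27\,c}\, J(u)^2 .
\end{equation*}

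It remains to identify $\inf_{V(c)} J$. Because $J$ is invariant under both the scaling $u \mapsto u_t$ and scalar multiplication, and because any $u$ with $B(u) < 0$ can be sent into $V(c)$ by $u \mapsto u_{t_0}$ without changing $J$, the infimum of $J$ over $V(c)$ coincides with the global minimal value $J_{\min}$ attained by the Antonelli--Sparber minimizer. Taking the infimum in the displayed identity gives $\gamma(c) = \tfrac{2}{27c}\, J_{\min}^2$. Finally, since the scaling used in \cite{AS} to turn a minimizer of $J$ into the genuine solution $v$ leaves $J$ unchanged, we have $J(v) = J_{\min}$, and therefore $E(v) = \tfrac{2}{27c}\, J(v)^2 = \tfrac{2}{27c}\, J_{\min}^2 = \gamma(c)$, as claimed.

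The main obstacle I anticipate is the clean verification of the scaling laws $A(u_t) = t^2 A(u)$ and $B(u_t) = t^3 B(u)$---in particular handling the nonlocal term through $\hat K$---together with the identification $\inf_{V(c)} J = J_{\min}$, which hinges on the fact that the $J$-minimization and the constrained problem on $V(c)$ are genuinely the same problem once the scale invariance is accounted for. Once these are in place the energy identity makes the comparison $E(v) = \gamma(c)$ immediate.
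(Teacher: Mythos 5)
Your proof is correct and takes essentially the same route as the paper: both rest on the Pohozaev relations $A(u)=6E(u)$, $B(u)=-\tfrac{2}{3}A(u)$ on $V(c)$, which make $J$ a monotone function of $E$ there (the paper writes this as $J(u)=\tfrac14 6^{3/2}c^{1/2}E(u)^{1/2}$, equivalent to your $E(u)=\tfrac{2}{27c}J(u)^2$), combined with $\gamma(c)=\inf_{V(c)}E$ from Step 2 of Theorem \ref{thm:standing}. The only difference is organizational: the paper argues by contradiction against the minimizer $u_0\in V(c)$ furnished by Theorem \ref{thm:standing}, whereas you argue directly by using the scale invariance of $J$ to identify $\inf_{V(c)}J$ with its global minimum; both are sound.
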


\begin{remark}
Since we do not know if nonnegative solutions of \eqref{eq:main} are, up to translations, unique  it is not possible to directly identify the solutions of \cite{AS} with the ones at the mountain pass level. 
\end{remark}

Concerning the dynamics, under \eqref{AS} the global well posedness for \eqref{eq:evolutionbis} is not guaranteed in unstable regime. The problem is $L^2$ super-critical and energy estimates do not control the $H$ norm. Conditions for blow-up has been discussed in \cite{CMS}.  However we are able to prove the following global existence result in an open  nonempty set of $H$
that contains not only small  initial data.

\begin{thm}\label{thm:global}
Let   $u_0 \in H$ be an initial condition associated to \eqref{eq:evolutionbis} with $c=||u_0||_2^2$. If
$$Q(u_0)>0 \text{  and } E(u_0)<\gamma(c),$$
then  the solution of \eqref{eq:evolutionbis} with $a=0$ and  initial condition $u_0$ exists globally in times.
\end{thm}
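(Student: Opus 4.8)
The plan is to prove a global-existence result by showing that the set
$$\mathcal{G}(c) := \{u \in S(c) \ : \ Q(u) > 0, \ E(u) < \gamma(c)\}$$
is invariant under the flow of \eqref{eq:evolutionbis} with $a=0$, and that on this set the kinetic energy $A(u)$ stays bounded, which through the energy–subcritical local theory of \cite{CMS} forces the solution to be global. I would exploit the conservation laws: along the flow both the mass $N(\psi(\cdot,t)) = \|\psi\|_2^2 = c$ and the energy $E(\psi(\cdot,t)) = E(u_0) < \gamma(c)$ are preserved. So it suffices to control $A(\psi(\cdot,t)) = \|\nabla \psi\|_2^2$ uniformly in $t$ on the maximal interval of existence.

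First I would establish the dynamical invariance of the sign of $Q$. The key input is the variational characterization $\gamma(c) = \inf_{u \in V(c)} E(u)$ together with the fact, proved earlier, that $V(c) = \{u \in S(c) : Q(u) = 0\}$ separates $S(c)$. I claim that if $u_0$ satisfies $Q(u_0) > 0$ and $E(u_0) < \gamma(c)$, then $Q(\psi(\cdot,t)) > 0$ for all $t$ in the existence interval. Indeed, by continuity of $t \mapsto Q(\psi(\cdot,t))$, were $Q$ to vanish at some first time $t_0$, then $\psi(\cdot,t_0) \in V(c)$, whence $E(\psi(\cdot,t_0)) \geq \inf_{V(c)} E = \gamma(c)$, contradicting the conservation $E(\psi(\cdot,t_0)) = E(u_0) < \gamma(c)$. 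Thus $\psi(\cdot,t)$ remains in the component $\{Q > 0\}$ for all time.

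Next I would extract the a priori bound on $A(\psi(\cdot,t))$ from the combination $Q > 0$ and $E < \gamma(c)$. Using the definitions, $E(u) = \frac12 A(u) + \frac12 B(u)$ while $Q(u) = A(u) + \frac32 B(u)$, so one can eliminate $B$ and write, for instance, $E(u) = \frac13 Q(u) + \frac16 A(u)$, giving the identity $A(u) = 6E(u) - 2Q(u)$. On the invariant region this yields $A(\psi(\cdot,t)) = 6E(u_0) - 2Q(\psi(\cdot,t)) < 6E(u_0) < 6\gamma(c)$, using $Q > 0$; combined with $A \geq 0$ this pins the kinetic energy in a fixed bounded window, so $\|\psi(\cdot,t)\| = (\|\nabla \psi\|_2^2 + \|\psi\|_2^2)^{1/2}$ stays bounded by a constant depending only on $u_0$ and $c$. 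The precise coefficients in the linear relation among $E$, $Q$, and $A$ should be checked against the definitions of $A$, $B$, $Q$, but the structure — that $E$ and $Q$ are two independent linear combinations of $A$ and $B$, hence invertible to control $A$ — is what makes the argument work.

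Finally I would conclude global existence by a standard blow-up alternative. The local theory of \cite{CMS} provides a maximal existence time $T_{\max}$ with the dichotomy that either $T_{\max} = +\infty$ or $\|\nabla \psi(\cdot,t)\|_2 \to \infty$ as $t \uparrow T_{\max}$. Since we have just shown $\|\nabla \psi(\cdot,t)\|_2^2 = A(\psi(\cdot,t))$ remains bounded on $[0,T_{\max})$, blow-up in finite time is excluded and $T_{\max} = +\infty$; the same argument applies to negative times. The main obstacle, and the step requiring the most care, is the invariance argument in the second paragraph: one must ensure the continuity in $t$ of the relevant functionals (in particular of $Q$, which involves the nonlocal term $B$) along $H^1$ solutions, and verify that the topological separation of $S(c)$ by $V(c)$ is genuinely used — namely that $\{Q > 0\}$ and $\{Q < 0\}$ are open and that the solution cannot jump between them without crossing $V(c)$. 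This continuity is inherited from the $C([0,T_{\max}), H)$ regularity of the solution given by \cite{CMS} and the continuity of $A$, $B$ on $H$.
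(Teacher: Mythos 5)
Your proposal is correct and follows essentially the same route as the paper: both arguments rest on conservation of mass and energy, the identity $E(u)-\tfrac{1}{3}Q(u)=\tfrac{1}{6}A(u)$, the characterization $\gamma(c)=\inf_{V(c)}E$ to show $Q$ cannot vanish along the flow, and the blow-up alternative. The only difference is presentational (you phrase it as invariance of $\{Q>0,\ E<\gamma(c)\}$ plus the a priori bound $A\le 6E(u_0)$, while the paper argues by contradiction that blow-up would force $Q\to-\infty$ and hence a zero crossing), which is not a genuinely different argument.
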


For small data in the energy space we now show that scattering occurs independently of the values of $\lambda_1$ and $\lambda_2$. In particular it occurs in all regimes, stable and unstable.
\begin{thm}\label{thm:scat}
Let $\lambda_1, \lambda_2 \in \R\setminus\{0\}$. There exists $\delta>0$ such that if $||\psi_0||<\delta$ then the solution $\psi(t)$ of \eqref{eq:evolutionbis}  with $a=0$  scatters in $H.$ More precisely there exist $\psi_{\pm} \in H$
such that 
$$\lim_{t \rightarrow \pm \infty}||\psi(t)-e^{i t\frac{\Delta}{2}}\psi_{\pm}||=0.$$
\end{thm}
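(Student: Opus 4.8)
The plan is to run a standard small-data global existence and scattering argument based on the Strichartz estimates for the free group $e^{it\Delta/2}$ on $\R^3$, exploiting that the nonlinearity is cubic and hence $\dot H^{1/2}$-critical (and energy-subcritical). Writing $N(\psi) := \lambda_1|\psi|^2\psi + \lambda_2(K\star|\psi|^2)\psi$, I first record Duhamel's formula $\psi(t) = e^{it\Delta/2}\psi_0 - i\int_0^t e^{i(t-s)\Delta/2}N(\psi(s))\,ds$. The two pieces of $N$ are treated on the same footing: since $\hat K \in L^\infty$, the map $f \mapsto K\star f$ is a Fourier multiplier, bounded on every $L^p(\R^3)$ with $1<p<\infty$ (Lemma 2.1 of \cite{CMS}) and commuting with derivatives. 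Consequently $(K\star|\psi|^2)\psi$ obeys exactly the same multilinear Strichartz bounds as the local term $|\psi|^2\psi$, and the dipolar term introduces no genuinely new difficulty.

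Next I set up the functional framework. Because the cubic nonlinearity is critical at regularity $s_c = 1/2$, the correct scale-invariant space is a Strichartz norm $X(I)$ built on a $\dot H^{1/2}$-admissible pair $(q,r)$, i.e. $2/q + 3/r = 1$, while $N(\psi)$ will be measured in a dual norm $X'(I)$ associated with a $\dot H^{-1/2}$-admissible pair. The homogeneous and inhomogeneous Strichartz estimates then read, schematically, $\|e^{it\Delta/2}\psi_0\|_{X(\R)} \lesssim \|\psi_0\|_{\dot H^{1/2}}$ and $\|\int_0^t e^{i(t-s)\Delta/2}F\,ds\|_{X(I)} \lesssim \|F\|_{X'(I)}$. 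The decisive point, which forces the choice of the critical space, is the nonlinear estimate $\|N(\psi)\|_{X'(I)} \lesssim \|\psi\|_{X(I)}^3$: a H\"older computation in space and time shows that three factors in a $\dot H^{1/2}$-admissible norm land precisely in a $\dot H^{-1/2}$-admissible dual norm, so that the estimate is scale invariant and therefore closes \emph{globally} in time on $I=\R$, something that cannot be achieved at the non-critical $H^1$ scale. Smallness of the critical norm is then inherited from smallness in the energy space via interpolation, $\|\psi_0\|_{\dot H^{1/2}} \le \|\psi_0\|_2^{1/2}\|\nabla\psi_0\|_2^{1/2} \lesssim \|\psi_0\|$.

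With these estimates in hand, I run a contraction-mapping argument: for $\|\psi_0\| < \delta$ with $\delta$ small, the Duhamel map $\Phi(\psi)(t) = e^{it\Delta/2}\psi_0 - i\int_0^t e^{i(t-s)\Delta/2}N(\psi)\,ds$ maps a small ball of $X(\R)$ into itself and is a contraction there, producing a unique global solution with $\|\psi\|_{X(\R)}$ finite. To obtain a genuine energy-space solution and scattering in $H$, I upgrade regularity: once the global critical norm is small, the same estimates run with one derivative (using again that $K\star$ commutes with $\nabla$ and is $L^p$-bounded) yield a global bound on the $H^1$-valued Strichartz norms, in particular $\psi \in L^\infty(\R; H)$ with all relevant Strichartz norms finite on $\R$.

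Scattering then follows in the usual way. Setting $\psi_\pm := \psi_0 - i\int_0^{\pm\infty} e^{-is\Delta/2}N(\psi(s))\,ds$, the global finiteness of the Strichartz norms makes the improper integral converge in $H$ (the tails are Cauchy by the inhomogeneous Strichartz estimate applied on $[T,T']$), and unitarity of $e^{it\Delta/2}$ on $H$ gives $\|\psi(t) - e^{it\Delta/2}\psi_\pm\| = \|\int_t^{\pm\infty} e^{-is\Delta/2}N(\psi)\,ds\| \to 0$ as $t \to \pm\infty$. I expect the main obstacle to be the nonlinear Strichartz estimate at critical regularity, namely choosing admissible pairs for which the cubic term falls into an admissible dual space and the corresponding inhomogeneous (Foschi-type) Strichartz estimate is available, together with the persistence-of-regularity step needed to pass from the critical $\dot H^{1/2}$ theory to scattering in the full energy space $H$. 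The dipolar convolution, by contrast, is harmless thanks to the boundedness of $K\star$ on $L^p$.
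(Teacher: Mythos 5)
Your proposal is correct in outline, but it takes a genuinely different route from the paper. The paper never invokes the critical $\dot H^{1/2}$ theory: it works entirely at the $H^1$ level with the single admissible pair $(\frac83,4)$ (dual $(\frac85,\frac43)$) and no fractional derivatives. Its key input is instead \emph{variational}: Proposition \ref{prop:scat} shows that small $H^1$ data yields a global solution with a uniform-in-time $H^1$ bound, and this comes from Theorem \ref{thm:global} (solutions with $Q(u_0)>0$ and $E(u_0)<\gamma(c)$ are global, with $A(u(t))\le 6E(u_0)$ by the identity $E-\frac13Q=\frac16A$) combined with the fact that the mountain-pass level satisfies $\gamma(c)\to+\infty$ as $c\to0$, so small data automatically lies below the ground-state threshold. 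With that a priori $L^\infty_tH^1_x$ bound in hand, the interpolation $\|\psi\|_{L^8_tL^4_x}^2\lesssim\|\psi\|_{L^{8/3}_tL^4_x}^{2/3}\|\psi\|_{L^\infty_tH^1_x}^{4/3}$ turns the cubic nonlinearity into a power $\frac53$ of the bootstrap norm, and the inequality $y\le cb+cy^{5/3}$ closes by a continuity argument. Your purely perturbative critical-space argument buys independence from the variational structure (it would work verbatim in the stable regime and for any sign of $B$) at the cost of the heavier machinery of the $\dot H^{1/2}$ theory --- fractional Leibniz rules for $|\nabla|^{1/2}$ applied to $(K\star|\psi|^2)\psi$ and a persistence-of-regularity step --- none of which the paper needs. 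One small correction: your assertion that a global argument ``cannot be achieved at the non-critical $H^1$ scale'' is contradicted by the paper itself; because the data is small in the \emph{full} $H^1$ norm rather than merely in a scale-invariant norm, an $H^1$-level scheme does close, provided one controls $\|\psi\|_{L^\infty_tH^1_x}$ globally --- which the paper does through Proposition \ref{prop:scat} rather than through the contraction itself.
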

\begin{remark}
In case of cubic NLS the classical strategy to show small data scattering in $H$  is to prove that some $L^p_tW^{1,q}_x$ Strichartz admissible norm is uniformly bounded in time. In our case we follow the  same strategy recalling that the additional  nonlocal convolution term $K$ describing the dipolar interaction is a continuous operator in $L^p$
when $1<p<\infty$.  This  permits to prove the boundedness of  $L^{\frac 83}_{[0,\infty]}W^{1,4}_x$ and hence the scattering.
\end{remark}
We now prove that the standing waves associated to elements in $\mathcal{M}_c$ are unstable in the  following sense.
\begin{definition}
A standing wave $e^{i\omega t}v(x)$ is strongly unstable if for any $\varepsilon >0$ there exists $u_0 \in H$ such that $\left \| u_0-v \right \|_{H}< \varepsilon$ and the solution $u(t,\cdot)$ of the equation  \eqref{eq:evolutionbis} with $u(0, \cdot)=u_0$ blows up in finite time.
\end{definition}

\begin{thm}\label{thm:instability}
For any  $u \in \mathcal{M}_c$ the standing wave  $e^{-i \mu_c t}u$ where $\mu_c >0$ is the Lagrange multiplier, is strongly unstable.
\end{thm}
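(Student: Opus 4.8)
The plan is to run the classical variational--virial scheme of Berestycki--Cazenave, exploiting the scaling structure encoded in $Q$. Throughout write $u_\lambda(x) := \lambda^{3/2}u(\lambda x)$ for $\lambda>0$, the dilation that preserves $\|\cdot\|_2$. Using the homogeneity of $\hat K$ one checks $A(u_\lambda)=\lambda^2 A(u)$ and $B(u_\lambda)=\lambda^3 B(u)$, so that
\[
E(u_\lambda)=\tfrac12\lambda^2 A(u)+\tfrac12\lambda^3 B(u),\qquad Q(u_\lambda)=\lambda^2 A(u)+\tfrac32\lambda^3 B(u).
\]
Fix $u\in\mathcal{M}_c$. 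Since $u\in V(c)$ we have $Q(u)=0$, i.e. $B(u)=-\tfrac23 A(u)<0$; hence $g(\lambda):=E(u_\lambda)$ satisfies $g'(1)=Q(u)=0$ and $g''(1)=A(u)+3B(u)=-A(u)<0$, so $\lambda=1$ is a strict local maximum of $g$. I would take as perturbed data $u_0=u_\lambda$ with $\lambda>1$ close to $1$: then $\|u_\lambda-u\|_H\to0$ as $\lambda\downarrow1$, while $E(u_\lambda)<E(u)=\gamma(c)$ and, since $Q(u_\lambda)=\lambda g'(\lambda)<0$ for $\lambda>1$, also $Q(u_\lambda)<0$.

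Next I would prove the invariance of the set $\mathcal{B}:=\{w\in S(c):Q(w)<0,\ E(w)<\gamma(c)\}$, which contains all these $u_\lambda$. Since mass and energy are conserved along solutions of \eqref{eq:evolutionbis} with $a=0$, one has $E(\psi(t))<\gamma(c)$ and $\|\psi(t)\|_2^2=c$ throughout the maximal interval. If $t\mapsto Q(\psi(t))$ (continuous in $H$) vanished at some first time $t_0$, then $\psi(t_0)\in V(c)$ and therefore $E(\psi(t_0))\ge\inf_{V(c)}E=\gamma(c)$, contradicting $E(\psi(t_0))=E(\psi(0))<\gamma(c)$. Hence $Q(\psi(t))<0$ for all $t$, and $\mathcal{B}$ is invariant.

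The crucial quantitative step is a uniform negative upper bound for $Q$ along the trajectory. From $E=\tfrac12 A+\tfrac12 B$ and $Q=A+\tfrac32 B$ one reads off the identity $Q(w)=3E(w)-\tfrac12 A(w)$. For $w\in\mathcal{B}$, using $B(w)<0$, the map $\lambda\mapsto Q(w_\lambda)$ vanishes at the unique $\lambda_0=-2A(w)/(3B(w))$, which lies in $(0,1)$ precisely because $Q(w)<0$; since $w_{\lambda_0}\in V(c)$ and $E(w_{\lambda_0})=\tfrac16 A(w_{\lambda_0})=\tfrac16\lambda_0^2 A(w)\ge\gamma(c)$, we get $A(w)\ge 6\gamma(c)/\lambda_0^2\ge 6\gamma(c)$. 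Substituting into the identity yields $Q(w)\le 3\bigl(E(w)-\gamma(c)\bigr)$ on $\mathcal{B}$, hence along the solution $\psi$ issued from $u_\lambda$,
\[
Q(\psi(t))\le 3\bigl(E(\psi(0))-\gamma(c)\bigr)=:-3\delta<0 .
\]
I would then invoke the virial identity $\tfrac{d^2}{dt^2}\int_{\R^3}|x|^2|\psi(t)|^2\,dx=2Q(\psi(t))$, valid for finite-variance solutions, to obtain $\tfrac{d^2}{dt^2}\int_{\R^3}|x|^2|\psi(t)|^2\,dx\le-6\delta$; the nonnegative function $t\mapsto\int_{\R^3}|x|^2|\psi(t)|^2\,dx$ would then become negative in finite time, forcing blow-up. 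As $\|u_\lambda-u\|_H<\varepsilon$ for $\lambda$ near $1$, this establishes strong instability of $e^{-i\mu_c t}u$.

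The variational core is clean once the characterization $\gamma(c)=\inf_{V(c)}E$ is available. The genuine obstacle is the rigorous use of the virial identity: one needs (i) that the ground state $u$, and hence $u_\lambda$, have finite variance, which I would deduce from the decay estimates for solutions of \eqref{eq:main} in \cite{AS} together with $\mu_c>0$, and (ii) local well-posedness in $\Sigma=\{w\in H:|x|w\in L^2\}$ with propagation of finite variance, which should follow from the local theory of \cite{CMS} since $w\mapsto K\star w$ is bounded on every $L^p$, $1<p<\infty$. Should finite variance prove delicate, a localized virial identity of Ogawa--Tsutsumi type could replace the exact one, at the cost of estimating the truncation error terms.
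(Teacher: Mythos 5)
Your proposal is correct and follows essentially the same Berestycki--Cazenave/Glassey scheme as the paper: invariance of $\{Q<0,\ E<\gamma(c)\}$ via $\gamma(c)=\inf_{V(c)}E$, a uniform negative bound on $Q$ along the flow, and the virial identity (the paper likewise justifies finite variance through the decay estimates of \cite{AS}). The only cosmetic difference is that you derive the bound $Q\le 3\bigl(E-\gamma(c)\bigr)$ from the identity $Q=3E-\tfrac12A$ and the lower bound $A\ge 6\gamma(c)$, whereas the paper uses the concavity of $t\mapsto E(u^t)$ on $[t^*,\infty)$ to get $Q(v)\le E(v_0)-E(u_c)$ --- the same estimate in substance; note also that your choice $\lambda>1$ for the perturbed datum is the correct sign (the paper's ``$\lambda<1$'' is a typo, since $Q(u_c^\lambda)<0$ requires $\lambda>t^*(u_c)=1$).
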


In a second part of the paper we analyse what happens  to the system when one add, gradually, a confining potential. We are particularly interested in the existence of ground states and their stability but we shall also obtain the existence of excited states.  \medskip

When $a>0$ the  functional associated to (\ref{eq:maina}) becomes
\begin{equation}
\label{functional}
E_a(u):= \frac{1}{2}||\nabla u||_2^2 + \frac{a^2}{2} |||x|u||_2^2 + \frac{1}{2}\frac{1}{(2\pi)^3}\int_{\R^3} (\lambda_1+\lambda_2 \hat K(\xi))|\hat {u^2}|^2d\xi.
\end{equation}
This functional being now defined on the  space 
\begin{equation}\label{Sigma}
\Sigma=\left\{ u \in H \ s.t. \ \int |x|^2u^2dx<\infty \right\}.
\end{equation}
The associated norm is
$$||u||_{\Sigma}^2:=||u||_{H}^2+|||x|u||_2^2.$$
It is standard, see \cite{CaHa,CMS}, that $E_a(u)$ is of class $C^1$ on $\Sigma.$ Note that $\Sigma$ has strong compactness properties which will be essential in our analysis. In particular the embedding $\Sigma \hookrightarrow L^p(\R^3)$ is compact for $p \in [2,6)$, see for example \cite[Lemma 3.1]{Zh}.
\medskip

For simplicity we keep the notation $S(c)$ for the constraint which is now given by 
$$S(c)=\left\{ u \in \Sigma \ s.t. \ ||u||_2^2=c\right\}.$$ 

\begin{definition}\label{localminimizer}
For $c>0$ being given we say that $v \in S(c)$ is a topological local minimizer for $E_a(u)$ restricted to $S(c)$ if there exist an open subset $A \subset S(c)$ with $v \in A$, such that
\begin{equation}\label{carlocalminimizer}
E_a(v) = \inf_{u \in A}E_a(u) \quad \mbox{and} \quad E_a(v) < \inf_{u \in \partial A}E_a(u).
\end{equation}
Here the boundary is taken relatively to $S(c)$. If this occurs we say that $v$ is a topological local minimizer for $E_a(u)$ on $A$.
\end{definition}

\begin{thm}
\label{thm: mainn}
Let $c>0$ be given and assume that  $(\lambda_1, \lambda_2)$ satisfies \eqref{AS}. Then there exists a value $a_0 = a_0(\lambda_1, \lambda_2) >0$ such that for any $a \in (0, a_0],$
\begin{enumerate}
\item $E_a(u)$ restricted to $S(c)$ admits a ground state $u_a^1$ and there exists a $k>0$ such that $u_a^1$ is a topologial local minimizer for $E_a(u)$ on the set
$$B_{k}=\{u \in S(c)\ s.t. \ ||\nabla u||_2^2 < k\}.$$
In addition any ground state for $E_a(u)$ restricted to $S(c)$ is a topological local minimizer for $E_a(u)$ on $B_{k}$. \smallskip
\item $E_a(u)$ restricted to $S(c)$ admits a second critical point $u_a^2$ obtained at a mountain pass level and it corresponds to an excited state. \smallskip
\item The following properties hold 
\begin{enumerate}
\item $u_a^1$ and $u_a^2$ are real, non negative. \smallskip
\item For any $a \in (0, a_0]$, $0 < E_a(u_a^1) < E_a(u_a^2).$ \smallskip
\item Any ground state $u_a \in S(c)$ for $E_a(u)$ on $S(c)$ satisfies $A(u_a) \to 0$ and $E_a(u_a) \to 0$ as $a \to 0$. Also  $E_a(u_a^2) \to \gamma(c)$, where $\gamma(c)>0$ is the least energy level of $E(u)$, the functional without the trapping potential.
\end{enumerate}
\end{enumerate}
\end{thm}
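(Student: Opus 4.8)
The plan is to transfer the variational picture of the free problem to the trapped one by means of the mass-preserving scaling $u_t(x):=t^{3/2}u(tx)$, under which $A(u_t)=t^2A(u)$, $B(u_t)=t^3B(u)$ and $P(u_t)=t^{-2}P(u)$, where I write $P(u):=\||x|u\|_2^2$. Hence $E_a(u_t)=\tfrac12 t^2A(u)+\tfrac{a^2}{2}t^{-2}P(u)+\tfrac12 t^3B(u)$. Differentiating at $t=1$ produces the virial functional $Q_a(u):=A(u)-a^2P(u)+\tfrac32 B(u)$, and, exactly as in Lemma \ref{naturalconstraint}, every critical point of $E_a|_{S(c)}$ lies on the natural constraint $V_a(c):=\{u\in S(c):Q_a(u)=0\}$. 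The algebraic identity $E_a=\tfrac13 Q_a+\tfrac16 A+\tfrac56 a^2P$ then shows that every $v\in V_a(c)$ satisfies $E_a(v)=\tfrac16 A(v)+\tfrac56 a^2 P(v)>0$. A direct study of the scalar map $t\mapsto E_a(u_t)$, which tends to $+\infty$ as $t\to0$ and, whenever $B(u)<0$, to $-\infty$ as $t\to\infty$, shows that for $a$ small it has exactly one local minimum at small scale and one local maximum at large scale. This is the geometric origin of the two critical points $u_a^1$ and $u_a^2$.

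For part (1) I would first fix $k>0$ so small that, by the Gagliardo--Nirenberg bound $|B(u)|\le C\sqrt{c}\,A(u)^{3/2}$ valid on $S(c)$, one has $E_a(u)\ge \tfrac12 A(u)-\tfrac{C\sqrt c}{2}A(u)^{3/2}+\tfrac{a^2}{2}P(u)\ge \rho_0>0$ on $\partial B_k=\{A(u)=k\}$, uniformly in $a$. Scaling down a fixed $\bar u$ with $B(\bar u)<0$, the small-scale local-minimum value of $t\mapsto E_a(\bar u_t)$ tends to $0$ as $a\to0$, so for $a\le a_0$ there is a point of $B_k$ with energy below $\rho_0$, giving $m_a(c):=\inf_{B_k}E_a<\rho_0\le\inf_{\partial B_k}E_a$. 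A minimizing sequence has $A(u_n)\le k$ and, since $E_a(u_n)$ is bounded and $|B(u_n)|$ is controlled by Gagliardo--Nirenberg, $P(u_n)$ is bounded; thus $(u_n)$ is bounded in $\Sigma$, and the compact embedding $\Sigma\hookrightarrow L^p$, $p\in[2,6)$, together with weak lower semicontinuity, yields a minimizer $u_a^1$. The strict energy gap forces $u_a^1\in B_k$, so it is a constrained critical point and a topological local minimizer in the sense of Definition \ref{localminimizer}. To see that $u_a^1$ is a ground state I would use $E_a(v)=\tfrac16A(v)+\tfrac56 a^2P(v)$: it gives $A(u_a^1)\le 6m_a(c)\to0$, so $m_a(c)\to0$, and after shrinking $a_0$ one has $m_a(c)<\tfrac16 k$. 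Since any critical point outside $B_k$ obeys $E_a(v)\ge\tfrac16 A(v)\ge\tfrac16 k>m_a(c)$, the ground state energy equals $m_a(c)$; hence $u_a^1$ is a ground state, every ground state lies in $B_k$ and is therefore a topological local minimizer on $B_k$, and $A(u_a)\to0$, $E_a(u_a)\to0$, which is part of (3c).

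For part (2), $\partial B_k$ is an energy barrier ($E_a\ge\rho_0$) separating the valley around $u_a^1$ from the nonempty region $\{E_a<0\}$, which has large $A$ and is reached by scaling. Setting $\gamma_a(c):=\inf_{g\in\Gamma_a}\max_{[0,1]}E_a(g(t))$ over $g\in C([0,1],S(c))$ with $g(0)=u_a^1$ and $E_a(g(1))<0$, each path must cross $\partial B_k$, so $\gamma_a(c)\ge\rho_0>m_a(c)$ and $E_a$ has a mountain pass geometry on $S(c)$. As in the free case I would produce a Palais--Smale sequence at level $\gamma_a(c)$ carrying the extra localization $Q_a(u_n)\to0$ (following \cite{Gh}); then $\tfrac16 A(u_n)+\tfrac56 a^2P(u_n)=E_a(u_n)-\tfrac13 Q_a(u_n)\to\gamma_a(c)$ gives boundedness in $\Sigma$, and the compact embedding upgrades this to strong convergence, producing the excited state $u_a^2$ at level $\gamma_a(c)$. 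The chain $0<E_a(u_a^1)=m_a(c)<\rho_0\le\gamma_a(c)=E_a(u_a^2)$ is (3b); reality and nonnegativity (3a) follow by replacing $u$ with $|u|$, which leaves $P$ and $B$ unchanged and does not increase $A$, exactly as in Lemma \ref{description}. Finally $E_a(u_a^2)\to\gamma(c)$ in (3c) I would obtain by comparing min-max levels: the upper bound by inserting a near-optimal path for $\gamma(c)$, with endpoints glued to $u_a^1$ and to a negative-energy point, and using $E_a=E+\tfrac{a^2}{2}P\to E$; the lower bound from $E_a\ge E$.

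The main obstacle is the compactness of the mountain pass Palais--Smale sequence: as already stressed for the free functional, boundedness is not automatic for an energy unbounded from below, and the device that rescues the argument is the construction of a localized sequence with $Q_a(u_n)\to0$, after which the compactness of $\Sigma\hookrightarrow L^p$ does the rest. The second delicate point is the convergence $\gamma_a(c)\to\gamma(c)$ as $a\to0$: the admissible path classes for $E_a$ and $E$ differ, since the trapped paths are anchored at $u_a^1\in B_k$, so comparing the levels requires a careful gluing of quasi-optimal paths together with uniform control of the potential energy $P$ along them.
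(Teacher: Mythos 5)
Your overall architecture coincides with the paper's: the same mass-preserving scaling and virial functional $Q_a$, the identity $E_a-\tfrac13 Q_a=\tfrac16 A+\tfrac56 a^2\||x|\cdot\|_2^2$, the uniform-in-$a$ barrier on $\{A(u)=k\}$ obtained from Gagliardo--Nirenberg, direct minimization over $B_k$ using the compact embedding $\Sigma\hookrightarrow L^p$, the identification of the ground state via $E_a(v)\ge\tfrac16 A(v)$ for every constrained critical point, and the two-sided comparison $\gamma(c)\le\gamma_a(c)\le\gamma(c)+\tfrac{a^2}{2t_1^2}D(u_c)$ for the asymptotics. All of that is correct and is essentially what the paper does (the paper anchors the mountain pass paths at the fixed points $v^{t_1},v^{t_2}$ obtained by scaling the free ground state, rather than at $u_a^1$, which removes your worry about comparing path classes: with these endpoints one has $\Gamma_a(c)\subset\Gamma(c)$ directly).

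The one step that would fail as you describe it is the construction of the localized Palais--Smale sequence with $Q_a(u_n)\to 0$ at the level $\gamma_a(c)$ ``following \cite{Gh}''. The Ghoussoub dual-set argument used for $a=0$ rests on the identity $\gamma(c)=\inf_{V(c)}E$, i.e. on the fact that the natural constraint sits exactly at the mountain pass level. For $a>0$ this is false: the fibering map $t\mapsto E_a(u^t)$ now has \emph{two} critical points (a local minimum and a local maximum), so $V_a(c)=\{Q_a=0\}$ contains the local minimizer $u_a^1$ and $\inf_{V_a(c)}E_a\le E_a(u_a^1)<\gamma_a(c)$; hence $V_a(c)$ is not an admissible dual set and \cite[Theorem 4.1]{Gh} cannot be invoked with $F=V_a(c)$ (nor is it clear that any natural sublevel restriction of $V_a(c)$ is dual to the path class). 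The paper replaces this device by the augmented functional $\widetilde{E}_a(u,s)=E_a(e^{3s/2}u(e^s\cdot))$ on $S(c)\times\R$ together with Ekeland's variational principle (Appendix, Lemma \ref{lm2}), which produces a Palais--Smale sequence satisfying $Q_a(u_n)=\partial_s\widetilde E_a(w_n,s_n)\to0$ without any duality hypothesis; once that sequence is in hand, your boundedness and compactness arguments go through verbatim. A related, smaller point: for the excited state $u_a^2$ you cannot simply replace $u$ by $|u|$ a posteriori as in Lemma \ref{description}, because that lemma's fibering argument uses the single-critical-point structure of $t\mapsto E(u^t)$, which is lost for $E_a$; the paper instead symmetrizes the quasi-optimal paths ($|g_n|\in\Gamma_a(c)$ with $E_a(|g_n|)\le E_a(g_n)$) and builds the Palais--Smale sequence asymptotically close to real nonnegative functions.
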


\begin{remark}
The change of geometry of the constrained energy functional can be viewed as a consequence of the  Heisenberg uncertainty principle, see e.g. \cite{LS},
\begin{equation}\label{Heisenberg}
\left(\int_{\R^3} |\nabla u|^2dx\right)^{\frac 12}\left(\int_{\R^3} |x|^2| u|^2dx\right)^{\frac 12}\geq \frac 32 \left(\int_{\R^3} |u|^2 dx\right).
\end{equation}
Using \eqref{Heisenberg} the energy functional $E_a(u)$, thanks to Gagliardo-Nirenberg inequality,  fulfills 
$$E_a(u) \geq \frac 12 A(u)+\frac{9a^2c^2}{8 A(u)}+ \frac{1}{2}B(u)\geq \frac 12 A(u)+\frac{9a^2c^2}{8 A(u)} -CA(u)^{\frac 32}c^{\frac 12}$$
for some constant $C>0$.
The fact that $E_a(u)$ admits a topological local minimum is closely related to the previous inequality which implies in particular that
\begin{equation}\label{blow}
\lim_{k \rightarrow 0} \inf_{u \in A_k} E_a(u) =+\infty.
\end{equation}
A qualitative picture is given by Figure 2.
\end{remark}

\begin{figure}\label{fig}
	\begin{center}
          \subfigure[]
	{\includegraphics[width=6cm,height=4.5cm]{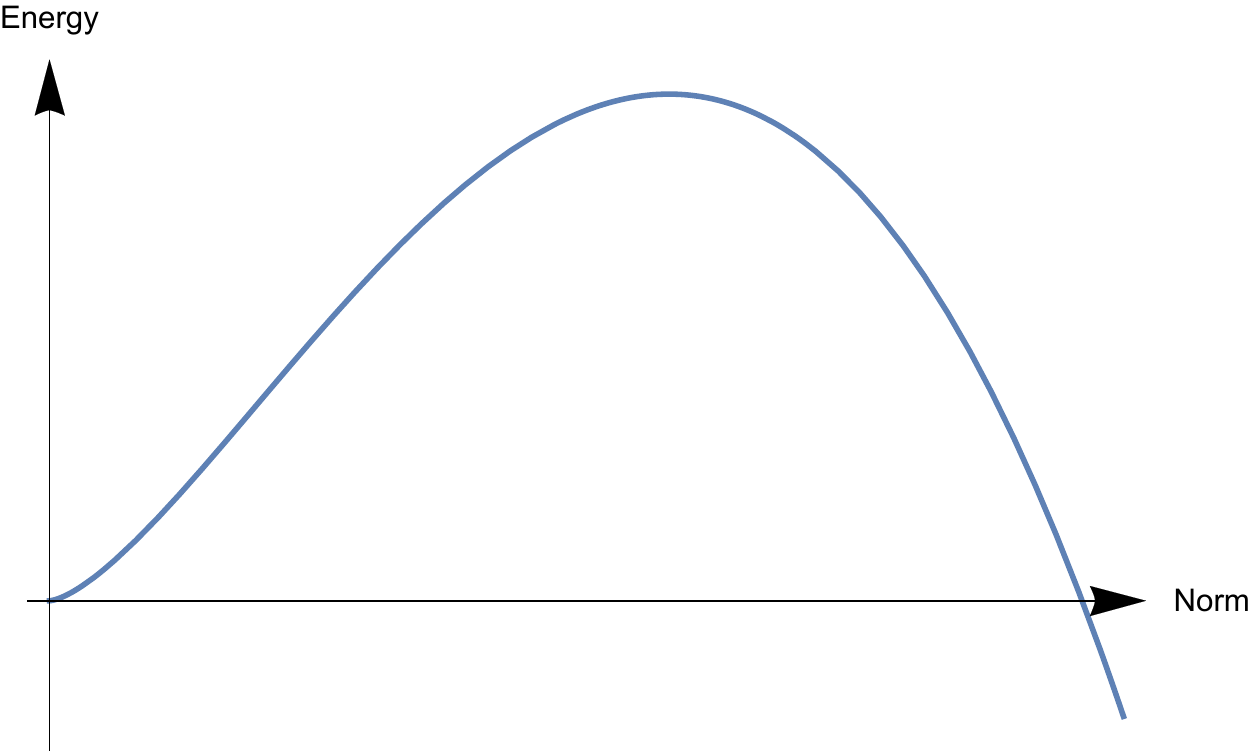}}
	 \subfigure[]
	{\includegraphics[width=6cm,height=4.5cm]{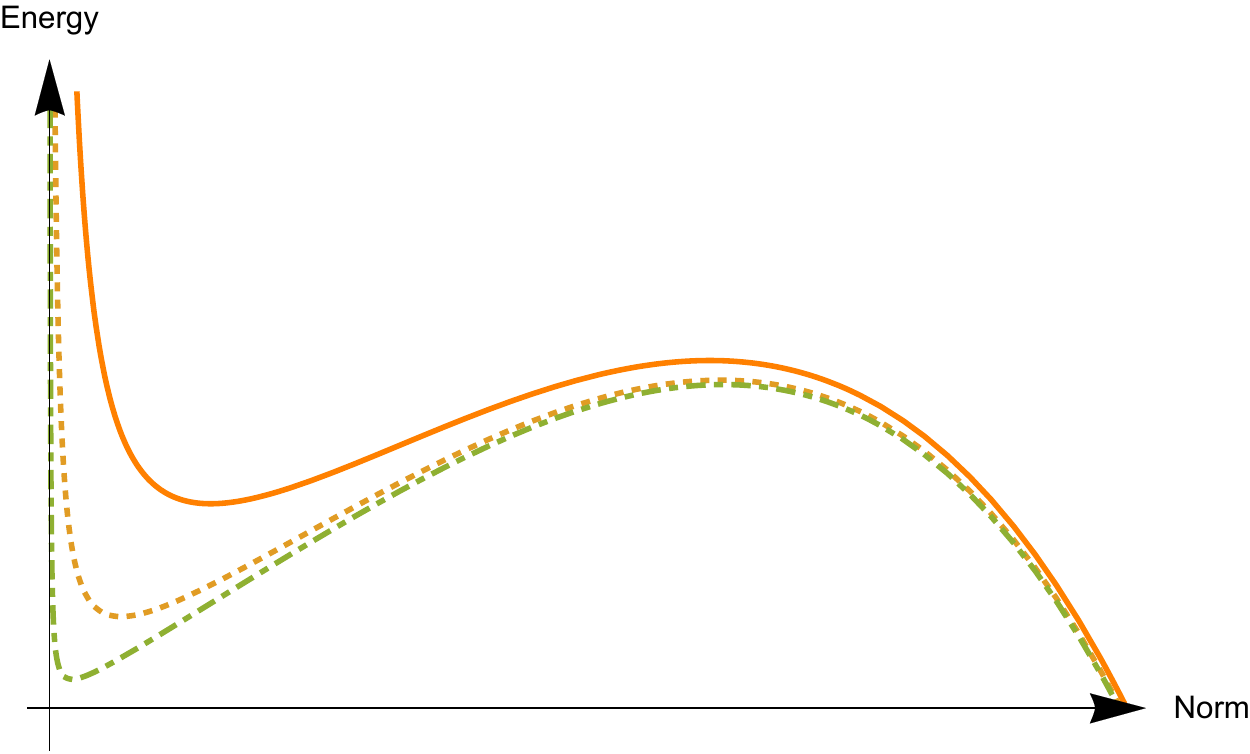}}
	\end{center}
\caption{Qualitative behavior of $E(u)$ (left) and $E_a(u)$ (right).  In figure (b) the three curves mimic the behavior $E_a(u)$ for three different values of  $a$.}
\end{figure}	

As a byproduct of Theorem \ref{thm: mainn} we are able to show that  topological local minimizers, taking $a>0$ fixed, fulfills $||u_a^1||_{\Sigma}\rightarrow 0$ when $c \rightarrow 0$. This fact implies 
\begin{cor}\label{Cor}
Under the assumption of Theorem \ref{thm: mainn} small data scattering cannot hold.
\end{cor}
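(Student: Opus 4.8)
The plan is to produce, for a fixed $a \in (0,a_0]$, a family of standing waves of \eqref{eq:evolutionbis} whose $H$-norm can be made arbitrarily small but which never scatter; this obstructs any small data scattering statement of the type of Theorem \ref{thm:scat}. The relevant standing waves are exactly the ground states $u_a^1 = u_a^1(c)$ provided by Theorem \ref{thm: mainn}, and the crucial ingredient is the announced byproduct: for fixed $a>0$ one has $\|u_a^1\|_\Sigma \to 0$ as $c \to 0$. Granting this, the corollary follows by a short contradiction argument.

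To establish the byproduct I would first bound the ground state energy from above. Fix a profile $\phi \in \Sigma$ with $\|\phi\|_2^2 = 1$ and test the infimum defining the local minimizer with $\sqrt c\,\phi \in S(c)$. Since $A(\sqrt c\,\phi) = c\,A(\phi) \to 0$, for $c$ small $\sqrt c\,\phi$ lies in the set $B_k$ over which $u_a^1$ minimizes $E_a$, and because $B$ is quartic one gets $E_a(\sqrt c\,\phi) = \tfrac{c}{2}\bigl(A(\phi)+a^2\||x|\phi\|_2^2\bigr) + \tfrac{c^2}{2}B(\phi)$, whence $E_a(u_a^1) \le C c$ for all small $c$. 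For the matching lower bound I would invoke the displayed Heisenberg plus Gagliardo--Nirenberg estimate
$$E_a(u) \ge \tfrac12 A(u) + \frac{9a^2c^2}{8\,A(u)} - C\, A(u)^{3/2} c^{1/2}.$$
Since $u_a^1 \in B_k$ gives $A(u_a^1)^{3/2} \le k^{1/2}A(u_a^1)$, the last term is $\ge -C k^{1/2}c^{1/2}A(u_a^1)$, so for $c$ small $E_a(u_a^1) \ge \tfrac14 A(u_a^1)$; combined with the upper bound this forces $A(u_a^1) \le 4C c \to 0$. Finally, writing $\tfrac{a^2}{2}\||x|u_a^1\|_2^2 = E_a(u_a^1) - \tfrac12 A(u_a^1) - \tfrac12 B(u_a^1) \le E_a(u_a^1) + \tfrac12|B(u_a^1)|$ and bounding $|B(u_a^1)| \le C A(u_a^1)^{3/2}c^{1/2}$ by Gagliardo--Nirenberg, the right-hand side tends to $0$; as $a>0$ is fixed this yields $\||x|u_a^1\|_2^2 \to 0$. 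Altogether $\|u_a^1\|_\Sigma^2 = A(u_a^1) + c + \||x|u_a^1\|_2^2 \to 0$.

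It remains to argue that the standing wave $e^{-i\mu_a^1 t}u_a^1$ never scatters. Its $L^4$-norm is constant in time and strictly positive, since $\|u_a^1\|_2^2 = c > 0$ forces $u_a^1 \neq 0$. If it scattered to some $e^{it\Delta/2}\psi_+$ in $H$, then by the embedding $H \hookrightarrow L^4(\R^3)$ we would have $\|e^{-i\mu_a^1 t}u_a^1 - e^{it\Delta/2}\psi_+\|_4 \to 0$, hence $\|u_a^1\|_4 = \lim_{t\to+\infty}\|e^{it\Delta/2}\psi_+\|_4$. But the standard dispersive decay of the free Schr\"odinger flow (via a density argument using that $\|\cdot\|_4 \lesssim \|\cdot\|_{H^{3/4}}$ and the unitarity of $e^{it\Delta/2}$ on $H^{3/4}$) gives $\|e^{it\Delta/2}\psi_+\|_4 \to 0$, contradicting $\|u_a^1\|_4 > 0$. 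Thus no ground state standing wave scatters.

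Combining the two facts proves the corollary: were small data scattering to hold, there would exist $\delta>0$ such that every datum with $H$-norm below $\delta$ scatters; but for $c$ small enough $\|u_a^1\| \le \|u_a^1\|_\Sigma < \delta$, and $u_a^1$ is a non-scattering datum, a contradiction. The main obstacle is the byproduct estimate, i.e.\ controlling simultaneously $A(u_a^1)$ and $\||x|u_a^1\|_2^2$ as $c\to 0$; the upper bound is elementary, but the lower bound must exploit the localization to $B_k$ so that the negative Gagliardo--Nirenberg term is absorbed by the kinetic term, which is precisely where the topological-local-minimizer structure of Theorem \ref{thm: mainn} enters.
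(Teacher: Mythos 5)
Your argument is correct and reaches the same reduction as the paper, namely that the corollary follows once one shows that, for fixed $a>0$, the topological local minimizers satisfy $\|u_a^1\|_{\Sigma}\to 0$ as $c\to 0$; but you prove this byproduct by a genuinely different route. The paper tests the local minimization problem with the $A$-preserving rescaling $u^c(x)=c^{-1/2}u_0(x/c)$, which sends $S(1)$ to $S(c^2)$ while keeping $A(u_0^c)=A(u_0)=\delta$ and making $B$ and $D$ small, and then compares the resulting upper bound on the minimal energy with the lower bound for $E_a$ on the sphere $C_{2\delta}$; since $\delta>0$ is arbitrary this traps $u_a^1$ in $A_{2\delta}$ and bounds $a^2D(u_a^1)$. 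You instead use the amplitude rescaling $\sqrt{c}\,\phi$ to obtain the sharper upper bound $E_a(u_a^1)\le Cc$, and then the coercivity estimate $E_a\ge \tfrac14 A$ on the set $\|\nabla u\|_2^2\le 2k$ (absorbing the Gagliardo--Nirenberg term into the kinetic term, which is legitimate precisely because the minimizer is localized there and $c$ is small) to conclude $A(u_a^1)=O(c)$, after which $D(u_a^1)\to 0$ follows from the energy identity. Your version is more quantitative and equally rigorous, and it correctly identifies the role of the topological-local-minimizer structure. You also supply the final step that the paper leaves implicit: a nontrivial standing wave has time-independent, strictly positive $L^4$ norm, whereas any free wave $e^{it\Delta/2}\psi_+$ decays in $L^4$, so arbitrarily small (in $\Sigma$, hence in $H$) non-scattering data exist. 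One small caveat: with the trap active the natural comparison dynamics would be $e^{-it(-\Delta/2+a^2|x|^2/2)}$ rather than the free flow, but the standing-wave obstruction rules out dispersion in either formulation, and since the paper does not make the notion of scattering precise in this setting, this does not affect the validity of your argument.
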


\begin{thm}
\label{stability}
Under the assumptions of Theorem \ref{thm: mainn} any ground state of $E_a(u)$ restricted to $S(c)$ is orbitally stable.
\end{thm}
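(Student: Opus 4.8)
The plan is to prove orbital stability of the \emph{set} of ground states by the standard local-minimization argument, the decisive new ingredient compared with the free case $a=0$ being the compactness of the embedding $\Sigma \hookrightarrow L^p(\R^3)$, $p \in [2,6)$. Write $\ell := E_a(u_a^1) = \inf_{u \in B_{k}} E_a(u)$ for the ground state level and, recalling \eqref{carlocalminimizer} in Definition \ref{localminimizer}, set the energy barrier $b := \inf_{u \in \partial B_{k}} E_a(u)$, so that Theorem \ref{thm: mainn} furnishes $\ell < b$. Let
$$\mathcal{G} := \{ v \in B_{k} \ s.t. \ \|v\|_2^2 = c, \ E_a(v) = \ell \}$$
denote the set of ground states; it is invariant under the phase action $v \mapsto e^{i\theta} v$. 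I will prove that $\mathcal{G}$ is orbitally stable in $\Sigma$, i.e. for every $\varepsilon > 0$ there is $\delta > 0$ with $\mathrm{dist}_\Sigma(\psi_0,\mathcal{G}) < \delta \Rightarrow \sup_{t} \mathrm{dist}_\Sigma(\psi(t),\mathcal{G}) < \varepsilon$; this is precisely the asserted orbital stability of the ground states.

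The technical core is the following compactness lemma: if $(v_n) \subset S(c)$ satisfies $\|\nabla v_n\|_2^2 \le k$ and $E_a(v_n) \to \ell$, then, along a subsequence, $v_n \to v$ strongly in $\Sigma$ with $v \in \mathcal{G}$. First I would observe that $\|v_n\|_2^2 = c$ with $\|\nabla v_n\|_2^2 \le k$ controls $B(v_n)$ by Gagliardo--Nirenberg, $|B(v_n)| \le C\|v_n\|_4^4 \le C'\|\nabla v_n\|_2^3\|v_n\|_2$, whence $a^2\||x|v_n\|_2^2 = 2E_a(v_n) - \|\nabla v_n\|_2^2 - B(v_n)$ is bounded and $(v_n)$ is bounded in $\Sigma$. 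The compact embedding then gives $v_n \rightharpoonup v$ in $\Sigma$ and $v_n \to v$ in $L^p$, $p \in [2,6)$; in particular $\|v\|_2^2 = c$, and since $|v_n|^2 \to |v|^2$ in $L^2$ while convolution by $K$ is bounded on $L^2$, one gets $B(v_n) \to B(v)$. Weak lower semicontinuity yields $E_a(v) \le \ell$ and $\|\nabla v\|_2^2 \le k$. Here the boundary must be excluded: if $\|\nabla v\|_2^2 = k$ then $v \in \partial B_{k}$ and $E_a(v) \ge b > \ell$, contradicting $E_a(v)\le\ell$; hence $\|\nabla v\|_2^2 < k$, so $v \in B_{k}$, $E_a(v) \ge \ell$, forcing $E_a(v) = \ell$ and $v \in \mathcal{G}$. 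Finally, from $B(v_n) \to B(v)$ and $E_a(v_n) \to E_a(v)$ the sum $A(v_n) + a^2\||x|v_n\|_2^2$ converges to $A(v) + a^2\||x|v\|_2^2$; combining this with the two separate lower-semicontinuity inequalities (a routine $\limsup$-splitting) gives $A(v_n) \to A(v)$ and $\||x|v_n\|_2^2 \to \||x|v\|_2^2$, hence $\|v_n\|_\Sigma \to \|v\|_\Sigma$. As $\Sigma$ is a Hilbert space, weak convergence together with convergence of norms yields strong convergence $v_n \to v$ in $\Sigma$. Applying this to sequences drawn from $\mathcal{G}$ shows $\mathcal{G}$ is compact, so that $\kappa := \sup_{v \in \mathcal{G}} \|\nabla v\|_2^2 < k$.

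Next I would establish invariance of the well and global existence. If $\mathrm{dist}_\Sigma(\psi_0,\mathcal{G})$ is small then $\|\nabla \psi_0\|_2^2 \le \kappa + o(1) < k$ and, by continuity of $E_a$ on $\Sigma$, $E_a(\psi_0) < b$. Along the $\Sigma$-solution of \eqref{eq:evolutionbis} (local well-posedness with conservation of mass and energy, see \cite{CMS,CaHa}) the map $t \mapsto \|\nabla \psi(t)\|_2^2$ is continuous; were it to reach $k$ at a first time $\tau$, then $\psi(\tau) \in \partial B_{k}$ would give $E_a(\psi(\tau)) \ge b$, contradicting $E_a(\psi(t)) = E_a(\psi_0) < b$. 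Thus $\|\nabla \psi(t)\|_2^2 < k$ for all $t$, which as above bounds $\||x|\psi(t)\|_2^2$ through the conserved energy, yielding a uniform $\Sigma$-bound and hence global existence by the blow-up alternative.

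The conclusion follows by contradiction. If $\mathcal{G}$ were not orbitally stable there would exist $\varepsilon_0 > 0$, initial data $\psi_{0,n}$ with $\mathrm{dist}_\Sigma(\psi_{0,n}, \mathcal{G}) \to 0$ (after rescaling to mass exactly $c$, the rescaling factor tending to $1$), and times $t_n$ with $\mathrm{dist}_\Sigma(\psi_n(t_n), \mathcal{G}) \ge \varepsilon_0$. By the previous paragraph the solutions are global, and $w_n := \psi_n(t_n)$ satisfies $\|w_n\|_2^2 = c$, $\|\nabla w_n\|_2^2 < k$ and $E_a(w_n) = E_a(\psi_{0,n}) \to \ell$, so the compactness lemma applies and a subsequence of $w_n$ converges in $\Sigma$ to some $v \in \mathcal{G}$, contradicting $\mathrm{dist}_\Sigma(w_n, \mathcal{G}) \ge \varepsilon_0$. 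I expect the main obstacle to be the compactness lemma, and within it the continuity of the nonlocal term $B$ under weak $\Sigma$-convergence and the exclusion of drift to $\partial B_{k}$; these are resolved, respectively, by the compact embedding $\Sigma \hookrightarrow L^4$ (which removes the vanishing/dichotomy obstruction present when $a=0$) and by the energy barrier $\ell < b$ supplied by the topological local minimizer structure of Theorem \ref{thm: mainn}.
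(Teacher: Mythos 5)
Your proposal is correct and follows essentially the same route as the paper: the Cazenave--Lions contradiction argument built on (i) the energy barrier $\ell < b$ from the topological local minimizer structure, which traps the trajectory in the well and gives global existence, and (ii) compactness of minimizing sequences in the well via the compact embedding $\Sigma \hookrightarrow L^p(\R^3)$, $p\in[2,6)$. The paper simply delegates the compactness step to Step 2 of the proof of Theorem \ref{thm: mainn}, whereas you spell it out (including the recovery of strong $\Sigma$-convergence from norm convergence), but the argument is the same.
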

The proof of  Theorem \ref{stability} is simple. By Theorem \ref{thm: mainn} we know that any ground state is a topological local minimizer for $E_a(u)$ on $B_{k}$. By conservation of the energy and of the mass, for any initial data in $B_{k}$ the trajectory remains in $B_{k}$ (and in particular we have global existence). As a consequence of this it is possible to directly apply  the classical arguments of Cazenave-Lions \cite{CL} which were developed to show the orbital stability of standing waves characterized as global minimizers. Note however that the energy $E_a(u)$ is unbounded from below on $S(c)$ for any
$a\geq 0$. 
\medskip

\begin{remark}\label{Holger}
 From the physical point of view, Theorems \ref{thm:standing}, \ref{thm: mainn} and \ref{stability} show that the introduction of a small trapping potential leads to a stabilization of a system which was originally unstable. Up to our knowledge such physical phenomena had not been observed previously  in laboratories or numerically. Note that such stabilizing effect is known to hold for lithium quantum gases (with a negative scattering lenght, attractive interactions), see \cite{BrSaToHu}. We conjecture that as the trapping potential increases the system ceases however to be stable.
\end{remark}
\begin{remark}
From Theorem \ref{thm: mainn} (1) we know that the ground state energy level corresponds to the one of the topological local minimizer $u_a^1$. Also from Theorem \ref{thm: mainn} (3) (c) we see that there is a discontinuity at $a=0$ in the energy level of the  ground state (which for $a=0$ corresponds to $\gamma(c)>0$). Thus the addition of a trapping potential, however small, create a {\it gap} in the ground state energy level of the system.  
\end{remark}

In contrast to the case $a=0$ where the Lagrange parameter $\mu \in \R$ (namely the chemical potential) associated to any solution is strictly positive, see Lemma \ref{lem:poho}, we now have when $a>0$,
\begin{thm}\label{thm:signmu}
Let $a \in (0, a_0]$ and $u$ be a ground state for $E_a(u)$ restricted to $S(c)$. Then if $a>0$ is sufficiently small $\mu \in \R$ as given in \eqref{eq:maina} satisfies $\mu<0.$
\end{thm}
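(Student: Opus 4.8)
The plan is to compute the Lagrange multiplier $\mu$ explicitly and reduce the claim $\mu<0$ to the comparison $A(u) < 7 a^{2}\,\| |x| u\|_2^2$. Since by Theorem \ref{thm: mainn} any ground state $u$ of $E_a$ on $S(c)$ is a topological local minimizer, hence a genuine constrained critical point, it solves \eqref{eq:maina} for some $\mu\in\R$. First I would test \eqref{eq:maina} against $u$ and integrate, which gives
\[
\tfrac12 A(u)+\tfrac{a^{2}}{2}\,\| |x| u\|_2^2+B(u)+\mu c=0 .
\]
Next I would record the Pohozaev identity attached to the dilation $u_t(x)=t^{3/2}u(tx)$, which preserves $S(c)$. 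Since $t\mapsto E_a(u_t)=\tfrac{t^{2}}{2}A(u)+\tfrac{a^{2}}{2}t^{-2}\| |x| u\|_2^2+\tfrac{t^{3}}{2}B(u)$ has a critical point at $t=1$ (the velocity of the path is tangent to $S(c)$, while $E_a'(u)$ is normal to it), differentiating at $t=1$ yields
\[
A(u)-a^{2}\| |x| u\|_2^2+\tfrac32 B(u)=0 .
\]

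Eliminating $B(u)$ between the two relations produces the pair of identities
\[
\mu c=\tfrac16\Big(A(u)-7a^{2}\| |x| u\|_2^2\Big),\qquad E_a(u)=\tfrac16 A(u)+\tfrac56\,a^{2}\| |x| u\|_2^2 ,
\]
so that $\mu<0$ is \emph{equivalent} to $A(u)<7a^{2}\| |x| u\|_2^2$. I want to stress that the exact energy identity on the right is the essential point: the crude bound $A(u)\le 6E_a(u)$ is far too lossy here, because $A(u)$ and $a^{2}\| |x| u\|_2^2$ turn out to be of the \emph{same} order as $a\to 0$, so one must keep track of their precise ratio rather than bound one by the total energy.

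To close the argument I would combine the exact identity $E_a(u)=\tfrac16 A(u)+\tfrac56 a^{2}\| |x| u\|_2^2$ with the lower bound used in the Remark following Theorem \ref{thm: mainn}, but \emph{before} invoking the Heisenberg inequality. Using the Gagliardo--Nirenberg inequality in $\R^{3}$ together with the $L^{2}$-continuity of $v\mapsto K\star v$ one has $|B(u)|\le C\,A(u)^{3/2}c^{1/2}$, whence
\[
E_a(u)\ \ge\ \tfrac12 A(u)+\tfrac{a^{2}}{2}\| |x| u\|_2^2-C\,A(u)^{3/2}c^{1/2}.
\]
Comparing this with the exact value of $E_a(u)$ and simplifying gives
\[
a^{2}\| |x| u\|_2^2\ \ge\ A(u)\Big(1-3C\,c^{1/2}A(u)^{1/2}\Big).
\]
By Theorem \ref{thm: mainn}(3)(c) one has $A(u_a)\to 0$ as $a\to 0$, so the factor in parentheses exceeds $1/7$ for all sufficiently small $a>0$; then $7a^{2}\| |x| u\|_2^2>A(u)$, i.e. $\mu<0$.

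The hard part is not any individual estimate but the structural observation that the sign of $\mu$ is controlled by the genuine competition between $A(u)$ and $a^{2}\| |x| u\|_2^2$, which forces the use of the exact Pohozaev energy identity; once that is in place, the quantitative input $A(u_a)\to 0$ from Theorem \ref{thm: mainn} makes the trapping term dominate and delivers $\mu<0$. A minor point to verify carefully is the admissibility of the scaling identity for the constrained critical point $u$, which follows since the dilation path stays on $S(c)$ and $E_a'(u)$ is proportional to $u$.
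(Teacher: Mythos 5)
Your argument is correct, and it takes a genuinely different route from the paper in the main case. Both proofs start from the same two first-order identities (the equation tested against $u$, and $Q_a(u)=A(u)-a^2D(u)+\tfrac32 B(u)=0$, with $D(u)=\||x|u\|_2^2$), and both ultimately rely on the quantitative input $A(u_a)\to 0$ as $a\to 0$ from Theorem \ref{thm: mainn}. But the paper then splits into the cases $B(u)\ge 0$ and $B(u)<0$: in the first case $Q_a(u)=0$ gives $A(u)\le a^2D(u)$ and the sign of $\mu$ follows at once, while in the second case the paper passes to the \emph{second} variation, showing that $\langle J_a''(u)u,u\rangle=2B(u)<0$ while a reduced Hessian $\langle \tilde E_a''(u)\varepsilon,\varepsilon\rangle$ is nonnegative once $\|u\|_3$ is small, and extracts $\mu<0$ from that comparison. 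You avoid both the case distinction and the Hessian entirely: from $Q_a(u)=0$ and the Gagliardo--Nirenberg/Plancherel bound $|B(u)|\le CA(u)^{3/2}c^{1/2}$ you get $A(u)-a^2D(u)=-\tfrac32 B(u)\le \tfrac32 CA(u)^{3/2}c^{1/2}$, and $A(u_a)\to 0$ then forces $a^2D(u)\ge A(u)\bigl(1-o(1)\bigr)$, which closes the argument via the exact multiplier formula. This is a purely first-order proof and is arguably cleaner; what the paper's second-variation argument buys in exchange is the more refined spectral information $\langle \tilde E_a''(u)\varepsilon,\varepsilon\rangle\ge 0$, which it reuses in the subsequent asymptotics for $\mu$. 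Two small remarks: your formula $\mu c=\tfrac16\bigl(A(u)-7a^2D(u)\bigr)$ is the correct one (the paper's \eqref{eq:pohomu} displays a coefficient $\tfrac56$ where the computation yields $\tfrac76$; this slip is harmless for the paper's conclusion but your constant $7$ is the right one for your equivalence), and your derivation of $Q_a(u)=0$ by differentiating $t\mapsto E_a(u^t)$ at $t=1$ is at the same level of rigor as the paper's own assertion that constrained critical points satisfy $Q_a=0$ — a fully rigorous justification would invoke the Pohozaev identity or the decay of solutions guaranteeing $x\cdot\nabla u$ is admissible, as the paper implicitly does.
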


Finally  we analyze what happen when $(\lambda_1,\lambda_2)$ moves  from {\it unstable region} towards the border of the {\it stable region}. 
\begin{thm}\label{asymtotic}
Let $c >0$  and assume that \eqref{AS} holds. Calling 
$\lambda_1' =\lambda_1-\frac 43 \pi \lambda_2$ when $\lambda_2>0$ ($\tilde \lambda_1' =\lambda_1+\frac 83 \pi \lambda_2$ when $\lambda_2<0$)  we have
when $\lambda_1' \rightarrow 0^-$ ($\tilde \lambda_1' \rightarrow 0^-$ respectively)
\begin{enumerate}
\item The $H$-norm of the mountain pass solution obtained in Theorem \ref{thm:standing} goes to infinity.
\item We can allow any $a_0 >0$ in Theorem \ref{thm: mainn}.
\end{enumerate}
\end{thm}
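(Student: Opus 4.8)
The plan is to isolate the single quantity that measures how close $(\lambda_1,\lambda_2)$ sits to the stable border, namely
\[
m := \inf_{\xi \in \R^3}\bigl(\lambda_1 + \lambda_2 \hat K(\xi)\bigr),
\]
and to track its appearance everywhere. Since $\hat K(\xi)\in[-\tfrac43\pi,\tfrac83\pi]$, one has $m=\lambda_1'$ when $\lambda_2>0$ and $m=\tilde\lambda_1'$ when $\lambda_2<0$; hypothesis \eqref{AS} says exactly $m<0$, and the regime under study is $m\to 0^-$, i.e. $|m|\to 0$. The one estimate driving both parts is that, by Plancherel and the definition of $B$ together with the three–dimensional Gagliardo--Nirenberg inequality (with universal constant $C_{GN}$),
\[
-B(u)\ \le\ |m|\,\|u\|_4^4\ \le\ |m|\,C_{GN}\,A(u)^{3/2}c^{1/2},\qquad u\in S(c).
\]

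For part (1) I would recall from the proof of Theorem \ref{thm:standing} that $\gamma(c)=\inf_{u\in V(c)}E(u)$ and that on $V(c)$ one has $Q(u)=0$, hence $B(u)=-\tfrac23 A(u)$ and therefore $E(u)=\tfrac16 A(u)$; in particular the mountain pass solution $u_c\in\mathcal M_c$ satisfies $A(u_c)=6\gamma(c)=\inf_{V(c)}A$. Feeding $-B(u)=\tfrac23 A(u)$ into the displayed estimate gives $\tfrac23 A(u)\le |m|\,C_{GN}A(u)^{3/2}c^{1/2}$ for every $u\in V(c)$, whence
\[
A(u)\ \ge\ \frac{4}{9\,C_{GN}^2\,c\,|m|^2}.
\]
Taking the infimum over $V(c)$ yields the same bound for $A(u_c)$, so that $\|u_c\|^2=c+A(u_c)\ge c+\tfrac{4}{9C_{GN}^2 c\,|m|^2}$, and the right-hand side tends to $+\infty$ as $|m|\to 0^-$. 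This is assertion (1).

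For part (2) I would return to the geometric mechanism behind Theorem \ref{thm: mainn}. Combining the Heisenberg inequality \eqref{Heisenberg} with the estimate above gives $E_a(u)\ge f_a(A(u))$, where
\[
f_a(s):=\tfrac12 s+\frac{9a^2c^2}{8s}-\tfrac12|m|\,C_{GN}\,c^{1/2}\,s^{3/2},\qquad s>0.
\]
The existence of the topological local minimizer together with the higher mountain pass critical point hinges on $f_a$ having the $N$-shaped profile (decreasing from $+\infty$ at $s=0^+$ to a local minimum, then rising to a strictly higher local maximum, then falling to $-\infty$), which holds precisely when $\max_{s>0}f_a'(s)>0$. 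Rescaling $s=\bigl(\tfrac12|m|C_{GN}c^{1/2}\bigr)^{-2}\sigma$ turns this into a single scale-free condition $\kappa<\kappa_0$, with $\kappa_0>0$ universal and
\[
\kappa:=\frac{9\,C_{GN}^4\,c^4}{128}\,a^2\,|m|^4 .
\]
Since $\kappa$ is increasing in $a$, the geometry holds for every $a\in(0,a_0]$ as soon as $\kappa(a_0)<\kappa_0$, i.e. $a_0\le C_*\,|m|^{-2}$ for a constant $C_*=C_*(c)$; and the remaining ingredients of Theorem \ref{thm: mainn} — compactness from the compact embedding $\Sigma\hookrightarrow L^p$ and the strict energy orderings — are insensitive to the value of $a_0$. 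Hence, given any prescribed $a_0>0$, it suffices to take $|m|$ (equivalently $\lambda_1'$, or $\tilde\lambda_1'$) small enough that $C_*\,|m|^{-2}\ge a_0$, and Theorem \ref{thm: mainn} then applies with this $a_0$, proving (2).

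The main obstacle I anticipate is in part (2): one must verify that the single condition $\kappa<\kappa_0$ genuinely governs the constant $a_0$ produced in the proof of Theorem \ref{thm: mainn}, and that enlarging $a_0$ as $|m|\to 0^-$ does not degrade the compactness step or the strict inequality $E_a(u_a^1)<E_a(u_a^2)$. Part (1), by contrast, is essentially immediate once the identity $E=\tfrac16 A$ on $V(c)$ is combined with the explicit $|m|$-dependence of the bound on $-B(u)$.
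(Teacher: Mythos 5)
Your Part (1) is correct and is essentially the paper's own argument: both reduce to the observation that at any critical point one has $Q(u)=0$, hence $A(u)=-\tfrac32 B(u)\le \tfrac32\,|m|\,C\,A(u)^{3/2}c^{1/2}$ with $|m|=|\lambda_1'|$, whence $A(u)\gtrsim |m|^{-2}$. The paper phrases this as ``for any $k>0$, $Q>0$ on $A_k$ once $\lambda_1'$ is close enough to $0$, while critical points must satisfy $Q=0$'', which is the same estimate in a different wrapper.

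For Part (2) your scaling $a_0\lesssim |m|^{-2}$ is the right one, but there is a genuine gap in the pivotal claim that the local-minimum/mountain-pass geometry ``holds precisely when'' the profile $f_a$ is $N$-shaped. Since $f_a$ is only a \emph{lower} bound, $E_a(u)\ge f_a(A(u))$, its $N$-shape produces the barrier, namely $\inf_{u\in C_{s_1}}E_a(u)\ge f_a(s_1)$ at the local maximum $s_1$ of $f_a$; it cannot, by itself, produce a point of $S(c)$ inside the well whose energy lies \emph{strictly below} that barrier, and without such a point one cannot verify the strict inequality in Definition \ref{localminimizer} nor the analogue of \eqref{test}, so neither critical point is obtained. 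This is exactly where the paper works: using the upper bound $\lambda_1+\lambda_2\hat K(\xi)\le 4\pi\lambda_2$ it shows that $\sup_{u\in A_1}E(u)$ is bounded \emph{independently of} $\lambda_1'$, fixes one $v\in S(c)$ with $A(v)=1$ so that $E_a(v)\le \sup_{A_1}E+\tfrac{a^2}{2}D(v)$ is a fixed finite number for the given $a$, and then lets $\lambda_1'\to 0^-$ so that $\sup_{k>0}\inf_{u\in C_k}E(u)\to+\infty$ overtakes it. To repair your argument you must supply this companion upper bound (e.g.\ a fixed test function rescaled by \eqref{def:sca}, whose $E_a$-energy is controlled uniformly in $\lambda_1'$) and check that it sits below $f_a(s_1)\sim|m|^{-2}$; once that is done the two routes coincide in substance, and the remaining ingredients (compactness via $\Sigma\hookrightarrow L^p$, the ordering $E_a(u_a^1)<E_a(u_a^2)$) are indeed insensitive to $a_0$, as you say.
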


We have choosen not to consider in this paper the stability of the standing wave corresponding to $u_a^2$. We conjecture that it is strongly unstable. Note that, due to the fact that the geometry of $E_a(u)$ on $S(c)$ is more complex than the one of $E(u)$, in particular the analogue of Lemma \ref{lem:growth} does not hold, the treatment of this question probably requires new ideas.  \medskip

We end our paper by an Appendix in which we prove a technical result concerning the Palais-Smale sequences associated to $E_a(u)$. 
 \medskip

In the sequel  we mainly consider  the first  case of \eqref{AS}, namely $\lambda_2>0,  \ \ \lambda_1-\frac{4}{3}\pi \lambda_2<0$, the second case follows by a similar treatment. \medskip

\textbf{Acknowledgements.} The second author thanks P. Antonelli and C. Sparber for a discussion on the interest of showing that the solutions of \cite{AS} are orbitally unstable.  The first author thanks Nicola Visciglia for fruitful discussion concerning small data scattering. The two authors also thank W. Bao, H. Hajaiej and A. Montaru for stimulating discussions on a first version of this work. The authors thank Giovanni Stegel for Figure . The second author warmly thanks Holger Kadau for sharing with him his physical insight of the problem. In particular Remark \ref{Holger} and Theorem \ref{asymtotic} originate from our interactions. Finally we thanks the two referees whose comments have permit to improve our manuscript and to avoid to include a wrong result.
\\

\section{Derivation of our dimensionless GPE }
In order to obtain a dimensionless GPE from \eqref{eq:evolution}  we introduce the new variables
\begin{equation}
\label{eq:newvariable}
\tilde{t} = t, \quad \tilde{x} = \gamma x \, \mbox{ where } \, \gamma = \sqrt{\frac{m}{h}}, \quad \tilde{\psi}(\tilde{x}, \,  \tilde{t}) = \frac{1}{\sqrt{N}}\frac{1}{\gamma^{3/2}} \psi(x,t).
\end{equation}
Plugging (\ref{eq:newvariable}) into (\ref{eq:evolution}), dividing by $ h \sqrt{N} \gamma^{3/2}$ 
and then removing all $\, \tilde{ }\, $  we obtain the  dimensionless GPE
$$i  \frac{\partial \psi(x,t)}{\partial t} = - \frac{1}{2}\nabla^2 \psi + \frac{a^2}{2} |x|^2 \psi + \lambda_1 |\psi|^2 \psi + \lambda_2 (K \star |\psi|^2) \psi, \quad x \in \R^3, \quad t>0,
$$
under the normalization
$$N(\psi(\cdot, t)):= ||\psi(\cdot, t)||^2 = \int_{\R^3}|\psi(x,t)|^2 dx = \int_{\R^3}|\psi(x,0)|^2 dx = 1.$$
Here 
\begin{equation}
\label{eq:parameters}
\lambda_1 = 4 \pi a_s N \gamma, \quad \lambda_2 = \frac{mN \mu_0 \mu_{dip}^2 }{4 \pi h^2}\gamma
\end{equation}
and  the dimensionless long-range dipolar interaction potential $K(x)$ is given by
\begin{equation}
\label{eq:dipolar}
K(x) =  \frac{1- 3 cos^2(\theta)}{|x|^3}, \quad x \in \R^3.
\end{equation}

\section{Proof of Theorem \ref{thm:standing}}
First we show that any constrained critical point belongs to $V(c)$ and that the associated Lagrange multiplier is strictly positive.
\begin{lem}\label{lem:poho}
If $v$ is a weak solution of
$$
-\frac 12 \Delta u +\lambda_1|u|^2u+\lambda_2(K\star u^2)u+\mu u=0
$$
then $Q(v)=0$. If we assume $v\neq 0$ then $\mu >0$.
\end{lem}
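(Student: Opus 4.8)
The plan is to extract from the equation two independent scaling identities and combine them. The first is a \emph{Nehari-type} identity obtained by testing the equation against $v$ itself; the second is a \emph{Pohozaev-type} identity obtained from the $L^2$-preserving dilation $v_t(x):=t^{3/2}v(tx)$. The Nehari identity gives a linear relation between $A(v)$, $B(v)$, $\mu$ and $\|v\|_2^2$, the dilation identity gives exactly $Q(v)=0$, and eliminating $B(v)$ between the two isolates the sign of $\mu$.

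For the dilation identity I first record how the three pieces of the energy scale under $v_t$. A direct change of variables (and, for $B$, Plancherel \eqref{def:Plancherel} together with the fact that $K$ is homogeneous of degree $-3$, equivalently that the multiplier $\hat K$ is homogeneous of degree $0$) yields
\[
\|v_t\|_2^2=\|v\|_2^2,\qquad A(v_t)=t^2A(v),\qquad B(v_t)=t^3B(v).
\]
Since $v$ is a weak solution with multiplier $\mu$, it is an unconstrained critical point of the action $S_\mu(u):=E(u)+\tfrac{\mu}{2}\|u\|_2^2$, so $\langle S_\mu'(v),w\rangle=0$ for every admissible direction $w$. Taking $w=\tfrac{d}{dt}v_t|_{t=1}$ and using the explicit, smooth expression $S_\mu(v_t)=\tfrac12 t^2A(v)+\tfrac12 t^3B(v)+\tfrac{\mu}{2}\|v\|_2^2$, I differentiate at $t=1$ to obtain
\[
0=\frac{d}{dt}S_\mu(v_t)\Big|_{t=1}=A(v)+\frac32 B(v)=Q(v).
\]

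For the sign of $\mu$, testing the equation against $\bar v$ (an admissible $H$ test function) gives the Nehari identity
\[
\frac12 A(v)+B(v)+\mu\|v\|_2^2=0.
\]
From $Q(v)=0$ one has $B(v)=-\tfrac23 A(v)$; substituting yields $\mu\|v\|_2^2=\tfrac16 A(v)$. If $v\neq0$ then $\|v\|_2^2>0$, and moreover $A(v)>0$, since $A(v)=0$ would force $v$ to be constant and hence identically zero in $H^1(\R^3)$. Therefore $\mu>0$.

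I expect the main obstacle to be the rigorous justification of the dilation identity, namely that one may legitimately differentiate along $t\mapsto v_t$ and identify the derivative with the pairing of $S_\mu'(v)=0$ against the generator $\tfrac32 v+x\cdot\nabla v$. The difficulty is twofold: the dilation field $x\cdot\nabla v$ need not a priori lie in $H$, and the nonlocal term involves the singular kernel $K$. The homogeneity of $K$ (transparent on the Fourier side through the bounded, degree-$0$ multiplier $\hat K$) makes the scaling of $B$ exact and harmless; the differentiability issue is resolved either by invoking the regularity and decay of $v$ established in \cite[Theorem 1.1]{AS}, or by deriving the Pohozaev identity directly through a truncated integration by parts against $x\cdot\nabla v$ and passing to the limit. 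Once this identity is secured, the remaining algebra is immediate.
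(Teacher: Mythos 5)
Your proof is correct and essentially the same as the paper's: the paper combines the Pohozaev identity $\tfrac14 A+\tfrac34 B+\tfrac32\mu\|v\|_2^2=0$ with the Nehari identity $\tfrac12 A+B+\mu\|v\|_2^2=0$ to get $Q(v)=0$ and $A(v)=6\mu\|v\|_2^2$, and your mass-preserving dilation identity is precisely the linear combination of these two that eliminates $\mu$, so the two arguments use the same information in a different order. Your remark on the need to justify the scaling/Pohozaev identity for weak solutions matches the paper's reliance on \cite{AS} for that point.
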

\begin{proof}
The proof is essentially contained in \cite{AS}. It follows from Pohozaev identity that 
$$\frac{1}{4}A(u)+\frac{3}{4}B(u)+\frac{3}{2}\mu ||u||_2^2=0.$$
Moreover, multiplying the equation by $u$ and integrating one obtains 
$$\frac{1}{2}A(u)+B(u)+\mu ||u||_2^2=0.$$
The two equalities imply that
$$Q(u)= A(u)+\frac 32 B(u)=0 \quad \mbox{and} \quad A(u)=6\mu ||u||_2^2.$$
\end{proof}

\noindent To understand the geometry of $E(u)$ on $S(c)$ we introduce the scaling
\begin{equation}\label{def:sca}
u^t(x)=t^{\frac 32}u(t x), \quad t>0.
\end{equation}
Observing that $\mathcal{F}{(u^t)^2}(\xi)=\mathcal{F} {u^2}(\frac{\xi}{t})$ the energy rescales as
\begin{equation}\label{def:mainscal}
t \to E(u^{t})=\frac{t^2}{2}A(u)+\frac{t^3}{2}B(u).
\end{equation}
\begin{lem}\label{lem:base}
Let $u \in S(c)$ be such that $ \int_{\R^3} (\lambda_1+\lambda_2 \hat K(\xi))|\hat {u^2}|^2d\xi <0$ then:\\
(1)\   $A(u^t) \to \infty$ and $E(u^t)  \to -\infty$, as $t \to \infty$.\\
(2)\  There exists $k_0 >0$ such that $Q(u)>0$ if $||\nabla u||_2\leq k_0.$\\
(3)\ If $E(u)<0$ then $Q(u)<0.$
\end{lem}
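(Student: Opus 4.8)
The plan is to treat the three claims in turn, the only analytic input being the boundedness of $\hat K$ and the Gagliardo--Nirenberg inequality; everything else is the explicit scaling \eqref{def:mainscal} together with elementary algebra between $A$, $B$ and $Q$.

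For (1) I first record how the kinetic term scales. Since $u^t(x)=t^{3/2}u(tx)$ gives $\nabla u^t(x)=t^{5/2}\nabla u(tx)$, a change of variables yields $A(u^t)=t^2A(u)$. As $u\in S(c)$ with $c>0$ we have $u\not\equiv 0$, hence $A(u)>0$ (a function with vanishing gradient in $L^2(\R^3)$ is zero), and therefore $A(u^t)=t^2A(u)\to\infty$ as $t\to\infty$. For the energy I invoke \eqref{def:mainscal}, namely $E(u^t)=\frac{t^2}{2}A(u)+\frac{t^3}{2}B(u)$; the hypothesis says exactly that $B(u)<0$, so the cubic term dominates the quadratic one and $E(u^t)\to-\infty$.

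For (2) I bound $B(u)$ by $A(u)$. Because $\hat K(\xi)\in[-\frac43\pi,\frac83\pi]$ the factor $\lambda_1+\lambda_2\hat K(\xi)$ is bounded, so Plancherel (which gives $\frac{1}{(2\pi)^3}\int_{\R^3}|\hat{u^2}|^2\,d\xi=||u||_4^4$) yields $|B(u)|\le C||u||_4^4$ for some $C=C(\lambda_1,\lambda_2)>0$. The Gagliardo--Nirenberg inequality in $\R^3$ reads $||u||_4^4\le C_{GN}||\nabla u||_2^3\,||u||_2$, so on $S(c)$ one gets $|B(u)|\le C'c^{1/2}A(u)^{3/2}$. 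Substituting into $Q(u)=A(u)+\frac32B(u)$ gives
$$Q(u)\ge A(u)-\frac32C'c^{1/2}A(u)^{3/2}=A(u)\Bigl(1-\frac32C'c^{1/2}||\nabla u||_2\Bigr),$$
which is strictly positive once $||\nabla u||_2\le k_0$ with $k_0>0$ chosen small enough that $\frac32C'c^{1/2}k_0<1$ (using again $A(u)>0$).

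For (3) nothing beyond sign-chasing is needed. From $E(u)=\frac12A(u)+\frac12B(u)<0$ we obtain $A(u)+B(u)<0$, hence $B(u)<-A(u)$, and therefore
$$Q(u)=A(u)+\frac32B(u)<A(u)-\frac32A(u)=-\frac12A(u)<0.$$
The one genuinely analytic ingredient in the whole statement is the Gagliardo--Nirenberg estimate underlying (2); I do not expect any of the three parts to present a real obstacle, since (1) and (3) reduce to pure scaling and algebra.
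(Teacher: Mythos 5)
Your proof is correct and follows essentially the same route as the paper: the scaling formula \eqref{def:mainscal} for (1), the algebraic identity $E(u)-\frac13 Q(u)=\frac16 A(u)$ (which your sign-chasing in (3) reproduces), and Plancherel plus Gagliardo--Nirenberg to get $Q(u)\geq A(u)-CA(u)^{3/2}\|u\|_2$ for (2). The only cosmetic difference is that in (2) you bound $|B(u)|$ two-sidedly via the boundedness of $\hat K$, whereas the paper uses the one-sided bound $\lambda_1+\lambda_2\hat K(\xi)\geq \lambda_1-\frac43\pi\lambda_2$; both yield the same estimate.
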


\begin{proof}
Using \eqref{def:mainscal} and since it always holds that
\begin{equation}\label{star}
E(u)-\frac{1}{3}Q(u)=\frac{1}{6}A(u)
\end{equation}
  we get (1) and (3). Now thanks to Gagliardo-Nirenberg inequality and Plancherel identity there exists a constant $C>0$ such that
$$Q(u)> A(u)+\frac{3}{2}\frac{1}{(2\pi)^{\frac 32}}\int_{\R^3} (\lambda_1-\frac 43 \pi \lambda_2) |\hat {u^2}|^2d\xi=A(u)-C||u||_4^4 \geq A(u)- C A(u)^{\frac 32}||u||_2,$$
and this proves (2). 
\end{proof}

Our next lemma is inspired by \cite[Lemma 8.2.5]{TC}.
\begin{lem}\label{lem:growth}
Let $u \in S(c)$ be such that $ \int_{\R^3} (\lambda_1+\lambda_2 \hat K(\xi))|\hat {u^2}|^2d\xi <0$ then we have: \\
(1)\ There exists a unique $t^{\star}(u)>0$, such that $u^{t ^{\star}} \in V(c)$;\\
(2)\ The mapping $t \longmapsto E(u^{t})$ is concave on $[t ^{\star}, \infty)$;\\
(3)\ $t ^{\star}(u)<1$ if and only if $Q(u)<0$;\\
(4)\ $t ^{\star}(u)=1$ if and only if $Q(u)=0$;\\
(5)\ $$Q(u^t)\left\{
\begin{matrix}
\ >0,\ \forall\ t &\in& (0,t^*(u));\\
\ <0, \ \forall\ t&\in& (t^*(u),+\infty).
\end{matrix}\right.$$
(6)\ $E(u^{t})<E(u^{t ^{\star}})$, for any $t>0$ and $t \neq t ^{\star}$;\\
(7)\ $\frac{\partial}{\partial t} E(u^{t})=\frac{1}{t}Q(u^{t})$, $\forall t >0$.
\end{lem}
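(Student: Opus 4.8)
The plan is to exploit the explicit rescaling formula \eqref{def:mainscal} and reduce everything to elementary calculus of the single-variable function $\varphi(t) := E(u^t) = \frac{t^2}{2}A(u) + \frac{t^3}{2}B(u)$, where under the standing hypothesis we have set $B(u) = \frac{1}{(2\pi)^3}\int_{\R^3}(\lambda_1 + \lambda_2\hat K(\xi))|\hat{u^2}|^2\,d\xi < 0$. The first step is to establish item (7), since it is the organizing identity from which several other items follow cheaply. Differentiating $\varphi$ directly gives $\varphi'(t) = tA(u) + \frac{3}{2}t^2 B(u)$, and a short computation shows this equals $\frac{1}{t}Q(u^t)$: indeed, from the scaling $A(u^t) = t^2 A(u)$ and $B(u^t) = t^3 B(u)$ one reads off $Q(u^t) = A(u^t) + \frac{3}{2}B(u^t) = t^2 A(u) + \frac{3}{2}t^3 B(u) = t\,\varphi'(t)$. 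This proves (7) on all of $(0,\infty)$.

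Next I would prove (1), (3), (4), (5) as consequences of the sign analysis of $\varphi'$. Since $A(u) > 0$ and $B(u) < 0$, the equation $\varphi'(t) = 0$ for $t > 0$ is equivalent to $A(u) + \frac{3}{2}t B(u) = 0$, which has the unique positive root
\begin{equation}\label{def:tstar}
t^\star(u) = -\frac{2A(u)}{3B(u)} > 0.
\end{equation}
By (7), $u^{t^\star} \in V(c)$ exactly when $Q(u^{t^\star}) = 0$, i.e. at this unique critical point, giving (1). The factor $A(u) + \frac{3}{2}tB(u)$ is positive for $t < t^\star$ and negative for $t > t^\star$, and multiplying by $t > 0$ gives the sign of $\varphi'(t)$ and hence, via (7), the sign of $Q(u^t)$; this is precisely (5). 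For (3) and (4) I would use that $Q(u) = Q(u^1)$ corresponds to $t = 1$ in the sign pattern of (5): $Q(u) < 0$ forces $t^\star < 1$ and conversely, while $Q(u) = 0$ forces $t^\star = 1$ and conversely. One must check the equivalences run both ways, but this is immediate from strict monotonicity of the sign of $Q(u^t)$ across $t^\star$.

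Finally, (6) and (2) follow from the shape of $\varphi$. From the sign analysis of $\varphi'$ just obtained, $\varphi$ is strictly increasing on $(0,t^\star)$ and strictly decreasing on $(t^\star,\infty)$, so $t^\star$ is the unique strict global maximizer, giving $E(u^t) < E(u^{t^\star})$ for all $t \neq t^\star$, which is (6). For the concavity in (2), I would compute $\varphi''(t) = A(u) + 3tB(u)$, which is an affine function of $t$ with negative slope $3B(u)$; it vanishes at $t = -A(u)/(3B(u)) = \frac{1}{2}t^\star < t^\star$. Hence $\varphi''(t) \le 0$ for all $t \ge \frac{1}{2}t^\star$, and in particular on $[t^\star,\infty)$, which is exactly the concavity claimed in (2). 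I do not anticipate a genuine obstacle here: the lemma is entirely a matter of the explicit cubic-in-$t$ structure of the rescaled energy, and the only care needed is bookkeeping the sign conventions and verifying the two-directional equivalences in (3)–(4); the one substantive input, the identity \eqref{star} relating $E$, $Q$ and $A$, is already available from Lemma \ref{lem:base}.
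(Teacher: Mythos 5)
Your proof is correct and follows essentially the same route as the paper: both reduce the lemma to the elementary calculus of the cubic $t\mapsto \frac{t^2}{2}A(u)+\frac{t^3}{2}B(u)$, identify $Q(u^t)=t\,\frac{\partial}{\partial t}E(u^t)$, and read off all seven items from the sign of $A(u)+\frac{3}{2}tB(u)$ (the paper phrases this through the auxiliary function $y(t)=tA(u)+\frac{3}{2}t^2B(u)$, you through the explicit root $t^\star=-2A(u)/(3B(u))$, which is only a cosmetic difference). Your closing remark about the identity \eqref{star} is superfluous --- neither your argument nor the paper's uses it here --- but this does not affect correctness.
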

\begin{proof}
Since $$E(u^{t})=\frac{t^2}{2}A(u)+\frac{t^3}{2}B(u)$$
we have that
$$ \frac{\partial}{\partial t} E(u^{t}) = t A(u)+\frac{3}{2}t^{2}B(u)
= \frac{1}{t}Q(u^{t}).$$
Now we denote
$$y(t)= t A(u) + \frac{3}{2}t^2B(u),
$$
and observe that $Q(u^{t})= t \cdot y(t)$ which proves (7). After direct calculations, we  see that:
\begin{eqnarray*}
y'(t)&=& A(u) +3tB(u); \\
y''(t)&=&  3B(u).
\end{eqnarray*}

From the expression of $y'(t)$ and the assumption $B(u)<0$ we know that $y'(t)$ has a unique zero that we denote $t_0>0$ such that $t_0$ is the unique maximum point of $y(t)$. Thus in particular the function $y(t)$ satisfies:\\
(i)\ $y(t_0)=\max_{t >0}y(t)$;\\
(ii)\ $\lim_{t\to +\infty}y(t)=-\infty$;\\
(iii)\ $y(t)$ decreases strictly in $[t_0, +\infty)$ and increases strictly in $(0, t_0]$.\medskip

By the continuity of $y(t)$, we deduce that $y(t)$ has a unique zero $t^{\star}>0$. Then $Q(u^{t^*}) =0$ and point (1) follows. Points (2)-(4) and (5) are also easy consequences of (i)-(iii). Finally since $y(t) >0$ on $(0, t^*(u))$ and $y(t) <0$ on $(t^*(u), \infty)$ we get (6). 
\end{proof} \medskip

\begin{prop}\label{prop}
Let $(u_n) \subset S(c)$ be a bounded Palais-Smale sequence for $E(u)$ restricted to $S(c)$ such that $E(u_n) \to \gamma (c)$. Then there is a sequence $(\mu_n)\subset \mathbb{R}$, such that, up to a subsequence:\\
(1)\ $u_n \rightharpoonup \bar{u}$ weakly in $H$;\\
(2)\ $\mu_n \to \mu$ in $\mathbb{R}$;\\
(3) $- \frac{1}{2}\Delta u_n + \lambda_1 |u_n|^2 u_n + \lambda_2 (K \star  |u_n|^2) u + \mu u_n  \to 0$ in $H^{-1}$;\\
(4) $- \frac{1}{2}\Delta \bar{u} + \lambda_1 |\bar{u}|^2 \bar{u}+ \lambda_2 (K \star  |\bar{u}|^2) \bar{u} + \mu \bar{u}= 0$ in $H^{-1}.$
\end{prop}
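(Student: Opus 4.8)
The plan is to prove the four assertions in the order stated, treating this as a fairly standard weak-convergence argument for a constrained Palais--Smale sequence, with the main subtlety being the control of the Lagrange multipliers and the passage to the limit in the nonlocal term. First I would exploit the hypothesis that $(u_n)$ is \emph{bounded} in $H$: boundedness immediately yields, up to a subsequence, a weak limit $u_n \rightharpoonup \bar u$ in $H$, which is assertion (1). Since the embedding $H \hookrightarrow L^p_{loc}$ is compact for $p \in [2,6)$, I would also extract strong convergence $u_n \to \bar u$ in $L^p_{loc}(\R^3)$ and a.e.\ convergence, which are the tools needed to identify the weak limit of the nonlinear terms.

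Next I would produce the sequence $(\mu_n)$ of Lagrange multipliers. Because $\|E'|_{S(c)}(u_n)\|_{H^{-1}} = o(1)$, the standard characterization of the constrained differential gives that $E'(u_n) + \mu_n u_n \to 0$ in $H^{-1}$ for the natural choice
$$
\mu_n = -\frac{1}{c}\,\langle E'(u_n), u_n\rangle = -\frac{1}{c}\bigl(A(u_n) + 2B(u_n)\bigr) + o(1),
$$
using $\|u_n\|_2^2 = c$. Assertion (3) is then just the explicit form of $E'(u_n) + \mu_n u_n \to 0$. To get assertion (2), namely convergence of $\mu_n$, I would argue that the right-hand side of the displayed formula converges: $A(u_n)$ and $B(u_n)$ are bounded (by boundedness of $(u_n)$ and continuity of the convolution operator on the relevant $L^p$ spaces, via \cite[Lemma 2.1]{CMS}), so after passing to a further subsequence $\mu_n \to \mu$ for some $\mu \in \R$. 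Here one must be slightly careful: a priori one only obtains convergence along a subsequence, which is exactly what the statement claims.

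For assertion (4) I would pass to the limit in assertion (3) against a fixed test function $\varphi \in H$. The linear term $\langle \nabla u_n, \nabla \varphi\rangle \to \langle \nabla \bar u, \nabla \varphi\rangle$ follows from weak convergence in $H$; the mass term $\mu_n \langle u_n, \varphi\rangle \to \mu \langle \bar u, \varphi\rangle$ combines (2) with weak $L^2$ convergence. The cubic term $\lambda_1 |u_n|^2 u_n$ converges weakly to $\lambda_1 |\bar u|^2 \bar u$ using a.e.\ convergence together with boundedness in $L^4$ and a standard Brezis--Lieb / dominated-convergence-type argument on compact supports of $\varphi$. The genuinely delicate step is the nonlocal term $\lambda_2 (K \star |u_n|^2) u_n$: here I would use that $u \mapsto K \star u$ is continuous on $L^p$ for $1<p<\infty$, so that $K \star |u_n|^2$ is bounded in a suitable $L^p$, combined with the a.e.\ convergence of $|u_n|^2$ and the local strong convergence of $u_n$, to pass to the limit in $\int (K\star |u_n|^2)\,u_n\,\varphi$ for each fixed compactly supported $\varphi$.

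The main obstacle I anticipate is precisely the identification of the weak limit of the nonlocal convolution term. Unlike a purely local nonlinearity, $K \star |u_n|^2$ depends on $u_n$ globally, so a.e.\ or local strong convergence of $u_n$ does not by itself control it; one needs the mapping properties of the singular kernel $K$ (continuity on $L^p$) to convert weak control of $|u_n|^2$ into convergence of the convolution in an appropriate topology. I would handle this by fixing $\varphi$ with compact support, splitting the convolution into a contribution on a large ball (where local compactness applies) and a tail estimated uniformly using the $L^p$ boundedness of $K\star(\cdot)$ and the boundedness of $(u_n)$, and then letting the ball exhaust $\R^3$. Once this step is secured, assertions (3) and (4) follow routinely, and no claim of strong convergence $u_n \to \bar u$ is needed at this stage --- that compactness is established separately, as flagged in the Remark.
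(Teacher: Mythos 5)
Your argument is correct and follows exactly the standard scheme the paper has in mind: the paper itself gives no details for this proposition, simply declaring the proof standard and referring to \cite[Proposition 4.1]{BJL}, whose argument is the one you reconstruct (Lagrange multipliers $\mu_n=-c^{-1}\langle E'(u_n),u_n\rangle=-c^{-1}(A(u_n)+2B(u_n))+o(1)$, boundedness giving convergence along a subsequence, and weak--strong pairing together with the $L^p$-continuity of $K\star\cdot$ from \cite[Lemma 2.1]{CMS} to pass to the limit in the nonlocal term). Nothing to correct.
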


\begin{proof}
The proof of Proposition \ref{prop} is standard and we refer to \cite[Proposition 4.1]{BJL} for a proof in a similar context.
\end{proof}

\begin{proof}[Proof of Theorem \ref{thm:standing}]
The proof  requires several steps. \medskip

\noindent{\bf Step 1: } {\it  $E(u)$ has a Mountain-Pass Geometry on $S(c)$.}  \medskip

First let us show that letting
$$C_k=\left\{ u \in S(c) \ s.t. \ \ A(u)=k\right\}$$
it is possible to choose $0 <k < 2k_0$, where $k_0>0$ is given in Lemma \ref{lem:base}, such that
\begin{equation}\label{def: mp}
0 \leq \inf_{u \in A_{k}}E(u) \leq  \sup_{u\in A_{k}} E(u) < \inf_{u \in C_{2k}} E(u).
\end{equation}
Indeed observe that by Gagliardo-Nirenberg inequality and Plancherel identity, for some positive constants $\tilde C_i$, $i=1,\cdots, 4$,
\begin{equation}\label{useful}
E(u)\leq \frac{A(u)}{2}+\tilde C_1||u||_4^4\leq   \frac{A(u)}{2}+ \tilde C_2 A(u)^{\frac 32}||u||_2.
\end{equation}
$$E(u)\geq \frac{A(u)}{2}+\frac{1}{2}\frac{1}{(2\pi)^3}\int_{\R^3} (\lambda_1-\frac 43 \pi \lambda_2) |\hat {u^2}|^2d\xi \geq \frac{A(u)}{2}-\tilde C_3||u||_4^4 \geq \frac{A(u)}{2}- \tilde C_4 A(u)^{\frac 32}||u||_2.$$
The proof of \eqref{def: mp} follows directly from these two estimates taking $k >0$ small enough.  Now for an arbitrary $v \in S(c)$ consider the scaling given by
\begin{equation}\label{def:scad1}
v^t(x)=t^{\frac 54}v(t x_1, t x_2, t^{\frac 12} x_3), \quad t>0.
\end{equation}
We have $v^t \in S(c)$ for all $t>0$ and the energy rescales as 
$$E(v^t)=\frac{t^2}{2}\int_{\R^3} |\nabla_{x_1,x_2} v|^2dx+\frac{t}{2}\int_{\R^3} |\nabla_{x_3} v|^2dx+\frac{t^{\frac{5}{2}}}{2}\frac{1}{(2\pi)^3}\int_{\R^3} (\lambda_1+\frac43 \pi \lambda_2 \frac{2t \xi_3^2-t^2\xi_1^2-t^2\xi_2^2}{t^2\xi_1^2+t^2\xi_2^2+t \xi_3^2})|\hat {v^2}|^2d\xi.$$
This expression of $E(v^t)$  follows observing that $\mathcal{F}{(u^t)^2}(\xi)=\mathcal{F} {u^2}(\frac{\xi_1}{t}, \frac{\xi_2}{t}, \frac{\xi_3}{\sqrt{t}})$
and by a change of variable. Now under \eqref{AS} we have that
$$\lim_{t \rightarrow \infty }\lambda_1+\frac 43 \pi \lambda_2 \frac{2t \xi_3^2-t^2\xi_1^2-t^2\xi_2^2}{t^2\xi_1^2+t^2\xi_2^2+t \xi_3^2}=\lambda_1-\frac{4}{3}\pi \lambda_2<0,$$
which implies that 
$\lim_{t \rightarrow \infty}E(v^t)=-\infty$ thanks to Lebesgue's theorem. Just note that in the second case in \eqref{AS}, $\lambda_2<0, \ \lambda_1+\frac{8}{3}\pi \lambda_2<0$, the same conclusion follows choosing the scaling 
\begin{equation}\label{def:scad1}
\tilde{v}^t(x)=\lambda^{\frac 54}v(t^{\frac 34} x_1, t^{\frac 34 } x_2, t x_3), \quad t >0.
\end{equation}
Thus in both cases the class of paths $\Gamma(c)$ defined in \eqref{Gamma} is non void. Now if $g \in \Gamma(c)$ there exists a $\overline{t} \in (0,1)$ such that $g(\overline{t}) \in C_{2k}$. Thus
$$\max_{t \in [0,1]}E(g(t)) \geq E(g(\overline{t})) \geq \inf_{u \in C_{2k}}E(u) > \sup_{u \in A_{k}}E(u),$$
and this implies that $\gamma(c) >0$ where $\gamma(c)$ is given in \eqref{gamma}. Thus $E(u)$ admits on $S(c)$ 	a mountain pass geometry.
 \medskip

{\bf Step 2: } {\it  $\gamma(c) = \displaystyle \inf_{V(c)} E$.} \medskip

Let $v\in V(c)$. Since  $Q(v) =0$ we get that  $B(v) <0$ and considering the scaling 
$v^t(x)=t^{\frac 32}v(t x), \, t>0$ we deduce from \eqref{def:mainscal} that there exists a  $t_1<<1$ and a $t_2>>1$ such that $v^{t_1}\in A_{k}$ and $E(v^{t_2})<0$. Thus if we define
$$g(\lambda)=v^{(1-\lambda)t_1+\lambda t_2} \ \ \ \lambda\in[0,1],$$
we obtain a path in $\Gamma(c)$. By the definition of $\gamma(c)$ 
$$\gamma(c)\leq \max_{\lambda\in [0,1]}E(g(\lambda))=E(g(\frac{1-t_1}{t_2-t_1}))=E(v).$$
On the other hand any path $g(t)$ in $\Gamma(c)$ by continuity and Lemma \ref{lem:base} crosses $V(c)$. This shows that
$$\max_{t \in [0,1]} E(g(t))\geq \inf_{u \in V(c)} E(u).$$
\medskip

{\bf Step 3:} {\it Existence of a  bounded Palais-Smale sequence $(u_n) \subset S(c)$ at the level $\gamma(c)$.} \medskip

As stated in the Introduction the mountain pass geometry implies the existence of a sequence $(u_n) \subset S(c)$ such that
$$E(u_n)=\gamma(c)+o(1), \ \ \ ||E'|_{S(c)}(u_n)||_{H^{-1}}=o(1).$$
By using an argument due to \cite{Gh}  we can strenghten this information and select a specific sequence localized around $V(c)$, namely such that
$dist(u_n, V(c))=o(1)$.  
To be more precise taking $F$ as $V(c)$ in \cite[Theorem 4.1]{Gh} we obtain the existence of a sequence $(u_n) \subset S(c)$ such that
$$E(u_n)=\gamma(c)+o(1), \ \ \ ||E'|_{S(c)}(u_n)||_{H^{-1}}=o(1), \ dist(u_n, V(c))=o(1).$$ The fact that taking $F= V(c)$ is  possible  follows from Steps 1 and 2. \medskip

Now, for any fixed $c>0$, it follows directly from \eqref{star} that the set
$$L:= \{ u \in V(c), E(u) \leq \gamma(c) +1 \}$$
is bounded. 
On the other hand $||dQ(\cdot)||_{H^{-1}}$ is bounded on any bounded set of $H$ and thus in particular in a neighborhood of $L$. Now, for any $n \in \N$ and any $w \in V(c)$ we can write
$$Q(u_n) = Q(w)  + dQ(au_n + (1-a)w) (u_n-w) = dQ(au_n + (1-a)w) (u_n-w)$$
where $a \in [0,1]$. Thus  choosing $(w_m) \subset V(c)$ such that
$$||u_n - w_m|| \to dist(u_n, V(c)) \mbox{ as } m \to \infty$$
 we obtain, since $dist(u_n, V(c)) \to 0$, that $Q(u_n) = o(1).$
At this point, using again \eqref{star}, we deduce that
$$E(u_n) = \frac{1}{6} ||\nabla u_n||_2^2+o(1)$$
which proves the boundedness of $(u_n) \subset S(c)$.\medskip

{\bf Step 4:} {\it For all} $c_1\in (0,c)$ \ \  $\gamma(c_1) > \gamma(c).$ \  \medskip

We use here the characterization 
\begin{equation}\label{addc}
 \gamma(c)=\inf_{u\in S(c)} \max_{t>0}E(u^t).
\end{equation}
To show \eqref{addc} let us denote the right hand side by $\gamma_1(c)$. On one hand by Lemma \ref{lem:growth} it is clear that for any $u \in S(c)$ such that $\max_{t>0}E(u^t) < \infty$ there exists a unique $t_0>0$ such that $u^{t_0} \in V(c)$ and $\max_{t>0}E(u^t) = E(u^{t_0}).$ Now $E(u^{t_0}) \geq \gamma(c)$ by Step 2 and we thus get that $\gamma_1(c) \geq \gamma(c)$. On the other hand, for any $u \in V(c)$, $\max_{t>0}E(u^t) = E(u)$ and this readily implies that $\gamma_1(c) \leq \gamma(c)$. \medskip

Now, recording \eqref{def:mainscal}, we get after a simple calculation,  that
\begin{eqnarray}\label{estima}
\max_{t>0} E(u^t)= \frac{2}{27} \, \frac{A(u)^3}{B(u)^2}.
\end{eqnarray}
Next take $u_1\in S(c_1)$, such that
$$
\max_{t>0} E(u^t_1) < \frac{c}{c_1}\gamma(c_1).$$
From the scaling $u_{\theta}(x):=\theta^{-\frac{1}{2}}u_1(\frac{x}{\theta})$ with $ \theta >0$, we have
$$\|u_{\theta}\|_2^2=\theta^2\|u_1\|_2^2, \quad A(u_{\theta})= A(u_1) \quad \mbox{and} \quad B(u_{\theta})=\theta\,  B(u_1).$$
Thus taking $ \displaystyle \theta^2 = \frac{c}{c_1}$ we obtain that $u_{\theta} \in S(c)$ and it follows from \eqref{addc} that
$$ \gamma(c) \leq \max_{t>0} E(u_{\theta}^t)= \frac{2}{27} \, \frac{A(u_1)^3}{B(u_1)^2} \frac{c_1}{c} < \gamma(c_1).$$
\medskip

{\bf Step 5:} {\it Convergence of the Palais-Smale sequence $(u_n) \subset S(c)$.} \medskip

From Step 3 we know that there exists a bounded Palais-Smale sequence  $(u_n) \subset S(c)$ such that $E(u_n)\rightarrow \gamma(c)$ and $Q(u_n)=o(1)$. Proposition \ref{prop} then implies that $u_n \rightharpoonup \bar{u}$ with $\bar{u}$ a solution of \eqref{eq:main}. Let us first show that we can assume  $\bar u \neq 0$. Notice that 
$$\int_{\R^3} (\lambda_1-\frac 43 \pi \lambda_2)|\hat {u_n^2}|^2d\xi<\int_{\R^3} (\lambda_1+\lambda_2 \hat K(\xi))|\hat {u_n^2}|^2d\xi=\frac{2}{3}Q(u_n)- \frac{2}{3}A(u_n)=o(1)-4 \gamma(c).$$
This implies by Plancherel identity that  $||u_n||_4\geq C>0.$ At this point since $||u_n||_2^2=c$, $||u_n||_6\leq C A(u_n)^{\frac12}<C$,  the classical pqr-Lemma \cite{FLL} implies that 
there exists a $\eta>0$, such that 
\begin{equation}\label{hyp2}
\inf_n \left|\{ |u_n|>\eta \}\right|>0 \,.
\end{equation}
Here  $|\cdot |$ denote the Lebesgue measure. This fact, together with Lieb Translation lemma \cite{Lieb}, assures  the existence of a sequence $(x_n)\subset\R^3$ such that a subsequence of $u_n(\cdot+ x_n)$ has a weak limit $\bar u \not\equiv 0$ in $H$. Now let us prove the strong convergence. Since  $\bar u$ is non trivial and is a solution of  \eqref{eq:main} we can assume by
Lemma \ref{lem:poho} that $\bar{u} \in V(c_1)$ for some  $0<c_1 \leq c$.
We recall that 
\begin{equation}\label{Splittings}
A(u-\bar u)+ A(\bar u)=A(u_n)+o(1), \ \ B(u-\bar u)+B(\bar u)=B(u_n)+o(1).
\end{equation}
For a proof of the  splitting property for $B(u)$ we refer to \cite{AS}. Since $E(u_n) \to \gamma(c)$ the splittings give
\begin{equation}\label{101}
\frac 12A(u_n-\bar u)+\frac 12 A(\bar u)+\frac 12B(u_n-\bar u)+\frac 12B(\bar u)=\gamma(c)+o(1)
\end{equation}
and we also have
\begin{equation}\label{1020}
Q(u_n-\bar u)+Q(\bar u)=Q( u_n)+o(1).
\end{equation}
Since $\bar u\in V(c_1)$ we have by Step 2 that $E(\bar{u}) \geq \gamma(c_1)$  and we deduce from \eqref{101} that
 \begin{equation}\label{eq:mono}
 E(u_n -\bar u)+\gamma(c_1)\leq \gamma(c)+o(1).
 \end{equation}
At this point from \eqref{1020}, \eqref{eq:mono}, Step 4 and using the fact that
$$\frac 16 A(u_n-\bar u) = E(u_n-\bar u)-\frac{1}{3}Q(u_n-\bar u)$$
we deduce that necessarily $c_1 = c$ and $A(u_n-\bar u)=o(1)$. This proves the strong convergence of $(u_n) \subset S(c)$ in $H$.
\medskip

{\bf Step 5:} {\it Conclusion} \medskip

Since  $(u_n) \subset S(c)$ converges we deduce from Proposition \ref{prop} the existence of a couple $(u_c, \mu_c) \in H \times \R$ which satisfies \eqref{eq:main} and such that $E(u_c) = \gamma(c)$. By Lemma \ref{lem:poho} we see that $\mu_c >0$. Still from Lemma \ref{lem:poho} and using Step 2 we deduce that $u_c$ is a ground state.
\end{proof}

\section{Proofs of Lemmas \ref{naturalconstraint} and \ref{description}}\label{Section51}

In this section we show that $V(c)$ acts as a natural constraint and derive some properties of the set of ground states of $E(u)$ on $S(c)$.

\begin{proof}[Proof of Lemma \ref{naturalconstraint}]
The fact that $V(c)$ is a $C^1$ manifold is standard  by the implicit function theorem. Let $u$ be a critical point of $E_{|_{V(c)}}$, then there exist $\mu_1$ and $\mu_2$ such that
$$E'(u) -\mu_1Q'(u)-2\mu_2u=0.$$
We need to show that  $\mu_1=0$. Notice that $u$ fulfills the following equation
\begin{equation}\label{100}
(1-2\mu_1)(-\Delta u) +2(1-3\mu_1)\left(\lambda_1|u|^2u+\lambda_2(K\star u^2)u\right)-2\mu_2 u=0.
\end{equation}
Multiplying \eqref{100} by $u$ and integrating we get
\begin{equation}\label{eq:nat1}
(1-2\mu_1)A(u) +2(1-3\mu_1)B(u)-2\mu_2 ||u||_2^2=0.
\end{equation}
Also from Pohozaev identity
\begin{equation}\label{eq:nat2}
\frac 12 (1-2\mu_1)A(u) +\frac{3}{2}(1-3\mu_1)B(u)-3\mu_2 ||u||_2^2=0. 
\end{equation}
Combining \eqref{eq:nat1} and \eqref{eq:nat2} we get
\begin{equation}\label{102}
(1-2\mu_1)A(u) +\frac 32 (1-3\mu_1)B(u)=0.
\end{equation}
Now using the fact that $u \in V(c)$, i.e  $A(u) +\frac 32 B(u)=0$, it follows from \eqref{102} that $\mu_1 A(u)=0$. Thus necessarily $\mu_1 = 0$.
\end{proof}

\begin{proof}[Proof of Lemma \ref{description}]
Let $u_c \in H$ with $u_c\in V(c)$. Since $\|\nabla |u_c|\|_2\leq \|\nabla u_c \|_2$ we have that $E(|u_c|)\leq E(u_c)$ and $Q(|u_c|)\leq Q(u_c)=0$. In addition, by Lemma \ref{lem:growth}, there exists $t_0\in (0,1]$ such that $Q(|u_c|^{t_0})=0$. Observe that, since  $Q(u_c)=Q(|u_c|^{t_0})=0$, we have 
$$
E(|u_c|^{t_0}) =  \frac 16 A(|u_c|^{t_0})  
 = t_0^2\cdot \frac 16 A(|u_c|)  
= t_0^2 \cdot E(|u_c|) \leq t_0^2 E(u_c).
$$
Thus if $u_c \in H$ is a minimizer of $E(u)$ on $V(c)$ we have
$$E(u_c)=\inf_{u \in V(c)}E(u)\leq E(|u_c|^{t_0}) \leq t_0^2 \cdot E(u_c),$$
which implies that $t_0=1$ since $t_0 \in (0,1]$. Then $Q(|u_c|)=0$ and we conclude that
\begin{eqnarray}
\|\nabla |u_c|\|_2 =  \|\nabla u_c \|_2\quad \mbox{and\quad } E(|u_c|)=E(u_c).
\end{eqnarray}
Thus point (i) follows. Now since $|u_c|$ is a minimizer of $F(u)$ on $V(c)$ we know by Lemmas \ref{naturalconstraint} and \ref{lem:poho}
that it satisfies \eqref{eq:main} for some $\mu_c > 0$. By elliptic regularity theory and the maximum principle it follows that
$|u_c| \in C^1(\R^3, \R)$ and $|u_c|>0$. At this point, using that $\|\nabla |u_c|\|_2 =  \|\nabla u_c \|_2$ the rest of the proof of point (ii) is exactly the same as in the proof of Theorem 4.1 of \cite{HAST}. 
\end{proof}


\begin{remark}\label{Jacopo}
Clearly in the stable regime, $B(u) \geq 0$, for any $u \in S(c)$. Then one always have that $Q(u) >0$ on $S(c)$ and, in view of Lemma \ref{lem:poho}, $E(u)$ has no constrained critical points on $S(c)$. Thus no solution of \eqref{eq:main}  exists. In Step 1 of the Proof of Theorem \ref{thm:standing} we show that \eqref{AS} is a sufficient condition for the existence of a  $u \in S(c)$, such that $E(u) <0$ and thus $B(u) <0$. Thus  \eqref{AS} is equivalent to  the existence of at least one $u \in S(c)$ such that $B(u) <0$.
\end{remark}

\section{Proof of Theorem \ref{thm:AS}}
The aim of this section is to  prove that the solutions obtained by \cite{AS} coincide  with minimizers of $E(u)$ on $V(c)$. In \cite[Theorem 1.1]{AS} the solutions of \eqref{eq:main} are obtained as minimizer of the functional
$$J(v):=\frac{A(v)^{\frac{3}{2}}||v||_2}{-B(v)}.$$
Let us call $u$ the minimizer of $J(v)$ that solves
for $\mu >0$
\begin{equation}\label{eq:mainbis}
-\frac 12 \Delta u +\lambda_1|u|^2u+\lambda_2(K\star u^2)u+\mu u=0
\end{equation}
and set $||u||_2^2 = c$. Our aim is to show that $E(u) = \gamma(c)$. Note that scaling properties of $J(u)$ allows to find a solution for any $\mu >0$. \medskip

Since $u$ satisfies (\ref{eq:mainbis}) then,  by Lemma \ref{lem:poho}, $Q(u)=0$ and this implies that
$$E(u) = \frac{1}{6}A(u) \quad \mbox{and} \quad B(u) = - \frac{2}{3}A(u).$$
It then follows by a direct calculation that
\begin{equation}\label{level}
J(u) = \frac{1}{4}\,6^{3/2}c^{1/2}E^{1/2}(u).
\end{equation}
Now assume that $u$ is not a minimizer of $E(u)$ on $V(c)$. Then denoting by $u_0 \in V(c)$ a minimizer of $E(u)$ on $V(c)$ (we know that it exists by Theorem \ref{thm:standing}) we have that $E(u_0) < E(u)$. Since $u_0 \in V(c)$ we also have that
$$ A(u_0) = 6 E(u_0) \quad \mbox{ and } \quad B(u_0) = - 4 E(u_0).$$
Thus 
\begin{equation}\label{levelbis}
J(u_0) = \frac{1}{4}\,6^{3/2}c^{1/2}E^{1/2}(u_0).
\end{equation}
Comparing \eqref{level} and \eqref{levelbis} we derive that
$J(u_0) < J(u)$
which provides a contradiction with the fact that $u$ minimizes $J(v)$.

\section{Proof of Theorem \ref{thm:global}}
Let $u(x,t)$ be the solution of \eqref{eq:evolutionbis} with $u(x,0)=u_0$ and $T_{max} \in (0, \infty]$ its maximal time of existence. Then classically we have either
$$T_{max}=+\infty$$
or
\begin{equation}\label{def: blowup}
T_{max} < + \infty \quad \mbox{ and } \lim_{t \rightarrow T_{max}}||\nabla u(x,t)||_2^2=\infty.
\end{equation}
Since
$$
E(u(x,t))-\frac 13 Q(u(x,t))=\frac 16 A(u(x,t))
$$
and  $E(u(x,t))=E(u_0)$ for all $t<T_{max}$, if \eqref{def: blowup} happens then,
we get
$$\lim_{t \rightarrow T_{max}}Q(u(x,t))=-\infty.$$
Since $Q(u(x,0))>0$, by continuity it exists $t_0 \in (0, T_{max})$ such that $Q(u(x, t_0))=0$ with
$E(u(x,t_0))=E(u_0)<\gamma(c)$. This contradicts the definition $\gamma(c) = \inf_{u\in V(c)}E(u)$.

\section{Proof of Theorem \ref{thm:scat}}

We recall the Duhamel formula associated to the evolution equation \eqref{eq:evolutionbis} when $a=0$
$$\psi(t)=U(t)\psi_0- i \lambda_1 \int_0^t U(t-s)(|\psi|^2\psi)(s)ds -i \lambda_2 \int_0^t U(t-s)((K\star |\psi|^2)\psi)(s)ds$$
where 
$$U(t)=e^{ it\frac{\Delta}{2}}$$
generates the time evolution of the linear Schr\"odinger equation. We also recall the Strichartz estimates in $\R^d$, $d\geq 3$
\begin{eqnarray}
& ||U(\cdot)\varphi||_{L^q_tL^r_x}\leq C||\varphi||_{L^2} \label{eq:se1}\\
& || \int_0^t U(t-s) F(s) ds||_{L^q_tL^r_x}\leq C||F||_{L^{q_1'}L^{r_1'}} \label{eq:se2}
\end{eqnarray}
where the pairs $(q,r)$, $(q_1,r_1)$ are  admissible, i.e $2\leq r\leq \frac{2d}{d-2}$ and $\frac{2}{q}=d(\frac{1}{2}-\frac{1}{r})$ (analogous for $(q_1,r_1)$).
The local Cauchy theory for  equation  \eqref{eq:evolutionbis} is proved in \cite{CMS}.
\begin{thm}[\cite{CMS}]
There exists $T>0$ depending only on $||\psi_0||$ such that  \eqref{eq:evolutionbis} with initial data $\psi_0$ has a unique solution $\psi \in X_{T}$, where
$$X_T=\left\{\psi \in C([0,T]; H^1(\R^3)); \ \ \psi, \nabla \psi \in C([0,T]; L^2(\R^3))\cap L^{\frac{8}{3}}([0,T];L^4(\R^3))\right\}.$$ 
\end{thm}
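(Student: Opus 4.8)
The plan is to solve the Duhamel (integral) formulation by a contraction mapping argument in the space $X_T$, exactly in the spirit of the standard Cazenave--Weissler theory for the cubic nonlinear Schr\"odinger equation; the only genuinely new ingredient is the nonlocal dipolar term. Working with $a=0$, as in the Duhamel formula recalled above, I would define for $\psi \in X_T$
\[
\Phi(\psi)(t) = U(t)\psi_0 - i\lambda_1\int_0^t U(t-s)(|\psi|^2\psi)(s)\,ds - i\lambda_2\int_0^t U(t-s)\big((K\star|\psi|^2)\psi\big)(s)\,ds,
\]
and equip $X_T$ with the norm built from the two admissible pairs $(\infty,2)$ and $(\tfrac83,4)$, applied to both $\psi$ and $\nabla\psi$. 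First I would observe that, by \eqref{eq:se1} and the fact that $U(t)$ commutes with $\nabla$, the free evolution $U(\cdot)\psi_0$ lies in $X_T$ with norm controlled by $\|\psi_0\|$, uniformly in $T$.

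The crucial observation is that the two nonlinearities obey \emph{the same} estimates. Since $\hat K \in L^\infty$, the map $u\mapsto K\star u$ is bounded on every $L^p(\R^3)$ with $1<p<\infty$, see \cite[Lemma 2.1]{CMS}; hence by H\"older $\|(K\star|\psi|^2)\psi\|_{L^{r'}} \lesssim \|\psi\|_{L^{3r'}}^3$, the same bound enjoyed by the local cubic term. Writing $\nabla\big((K\star|\psi|^2)\psi\big) = (K\star|\psi|^2)\nabla\psi + (K\star\nabla|\psi|^2)\psi$, the gradient is controlled by the very same H\"older products, because $K\star\nabla|\psi|^2$ is again estimated through the $L^p$ continuity of the convolution. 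I would therefore perform a single nonlinear estimate: distributing one derivative, using H\"older in space and the Sobolev embedding $H^1(\R^3)\hookrightarrow L^6(\R^3)$, one bounds $\|\nabla(|\psi|^2\psi)\|_{L^{r_1'}_x}$ by $\|\psi\|_{L^6}^2\|\nabla\psi\|_{L^4}$ for a suitable admissible pair $(q_1,r_1)$. Inserting this into the inhomogeneous Strichartz estimate \eqref{eq:se2} and applying H\"older in the time variable to pass from the $L^{8/3}_t$ control to the dual time exponent produces a positive power $T^\theta$ in front of the cubic expression.

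This yields estimates of the form $\|\Phi(\psi)\|_{X_T} \le C\|\psi_0\| + CT^\theta\|\psi\|_{X_T}^3$ and, for differences, $\|\Phi(\psi)-\Phi(\tilde\psi)\|_{X_T} \le CT^\theta(\|\psi\|_{X_T}^2+\|\tilde\psi\|_{X_T}^2)\|\psi-\tilde\psi\|_{X_T}$. Restricting $\Phi$ to a closed ball of radius $R\sim\|\psi_0\|$ and choosing $T=T(\|\psi_0\|)$ small enough that $CT^\theta R^2<\tfrac12$, the Banach fixed point theorem provides a unique fixed point $\psi\in X_T$; the dependence of $T$ only on $\|\psi_0\|$ is built into this choice. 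The regularity $\psi\in C([0,T];H^1)$ and the continuity of $t\mapsto\nabla\psi(t)$ in $L^2$ then follow from the Strichartz bounds by a standard argument.

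The main obstacle, and the only point where the dipolar structure truly intervenes, is the gradient estimate on the nonlocal term, namely controlling $K\star\nabla|\psi|^2$; this is resolved precisely by the boundedness of $u\mapsto K\star u$ on $L^p$. Once this is secured the convolution term is entirely parallel to the local cubic one, and the remaining work---selecting the admissible pairs and tracking the power of $T$---is routine.
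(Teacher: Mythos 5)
Your argument is correct and is essentially the proof of the cited result: the paper itself does not prove this theorem but quotes it from \cite{CMS}, where it is established by exactly this contraction-mapping argument in the Strichartz space built on the admissible pairs $(\infty,2)$ and $(\tfrac83,4)$, with the dipolar term treated in parallel with the local cubic term via the $L^p$-boundedness of $u\mapsto K\star u$ and the splitting $\nabla\big((K\star|\psi|^2)\psi\big)=(K\star|\psi|^2)\nabla\psi+(K\star\nabla|\psi|^2)\psi$, just as the paper does later in the proof of Theorem \ref{thm:scat}. One small caution: the $L^p$-boundedness for $p\neq 2$ does not follow from $\hat K\in L^\infty$ alone (that only gives $p=2$) but from the Calder\'on--Zygmund structure of the kernel, which is precisely the content of \cite[Lemma 2.1]{CMS} that you correctly invoke.
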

For the proof of Theorem \ref{thm:scat} we shall need the following
\begin{prop}\label{prop:scat}
There exists $\delta>0$ such that if $||\psi_0||<\delta$ then the solution $\psi(t)$ of  \eqref{eq:evolutionbis} is global and $\sup_t ||\psi||<\infty$.
\end{prop}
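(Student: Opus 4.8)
The plan is to run a small-data global well-posedness argument from the Duhamel formula and the Strichartz estimates \eqref{eq:se1}--\eqref{eq:se2}, controlling a single globally time-summable Strichartz norm over the whole half-line $[0,\infty)$. Two structural remarks guide the choice of norm. First, since we work in the unstable regime the energy $E$ is not coercive, so conservation of energy yields no a priori bound on $\|\nabla\psi(t)\|_2$; the uniform $H$ bound must therefore be produced \emph{directly} from the global Strichartz estimate rather than from a conservation law. Second, mass conservation gives $\|\psi(t)\|_2=\|\psi_0\|_2$ for free, so only the gradient part needs genuine work.

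First I would fix the admissible pair $(q,r)=(8/3,4)$ in $\R^3$ (it satisfies $\tfrac2q=3(\tfrac12-\tfrac1r)$) and, on a time interval $[0,T)$, introduce
$$\|\psi\|_{S_T}:=\|\psi\|_{L^\infty([0,T);H)}+\|\psi\|_{L^{8/3}([0,T);W^{1,4})}.$$
Applying \eqref{eq:se1}--\eqref{eq:se2} to the Duhamel formula, with output pairs $(\infty,2)$ and $(8/3,4)$ and the dual pair $(8/5,4/3)$ for the inhomogeneous terms, gives
$$\|\psi\|_{S_T}\le C\|\psi_0\|+C\big\|\nabla\!\big(|\psi|^2\psi\big)\big\|_{L^{8/5}L^{4/3}}+C\big\|\nabla\!\big((K\star|\psi|^2)\psi\big)\big\|_{L^{8/5}L^{4/3}}+(\text{undifferentiated terms}),$$
the undifferentiated pieces being estimated by the same, in fact easier, scheme.

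Second, I would establish the nonlinear estimate $\|\psi\|_{S_T}\le C\|\psi_0\|+C\|\psi\|_{S_T}^3$. Distributing the derivative by the Leibniz rule, a typical term is $|\psi|^2\nabla\psi$, and H\"older in space gives $\||\psi|^2\nabla\psi\|_{L^{4/3}_x}\le\|\psi\|_{L^4_x}^2\|\nabla\psi\|_{L^4_x}$. The crux is to recover the time integrability $L^{8/5}_t$: placing $\nabla\psi$ in $L^{8/3}_tL^4_x$ forces $\|\psi\|_{L^4_x}^2$ into $L^4_t$, i.e. $\psi\in L^8_tL^4_x$, and this auxiliary norm is controlled by interpolating the Strichartz norm against the energy norm,
$$\|\psi\|_{L^8_tL^4_x}\le\|\psi\|_{L^{8/3}_tL^4_x}^{1/3}\,\|\psi\|_{L^\infty_tL^4_x}^{2/3}\le C\,\|\psi\|_{S_T},$$
using the Sobolev embedding $H\hookrightarrow L^4(\R^3)$. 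Crucially, the nonlocal term obeys the very same bound: $\nabla$ commutes with $K\star$, and since $K\star$ is bounded on every $L^p$, $1<p<\infty$, by \cite[Lemma 2.1]{CMS}, each factor $K\star(\cdot)$ may be discarded up to the operator norm, so $(K\star|\psi|^2)\psi$ is no harder than $|\psi|^2\psi$.

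Finally, I would close by a standard continuity (bootstrap) argument. For $\|\psi_0\|<\delta$ with $\delta$ small, the inequality $\|\psi\|_{S_T}\le C\delta+C\|\psi\|_{S_T}^3$, together with the continuity of $T\mapsto\|\psi\|_{S_T}$ and the local theory, forces $\|\psi\|_{S_T}\le 2C\delta$ for every $T$, so $\psi\in L^{8/3}([0,\infty);W^{1,4})$ with norm bounded independently of $T$. Finiteness of this global Strichartz norm rules out the blow-up alternative, giving global existence, and substituting it back into the Duhamel formula produces $\sup_t\|\psi(t)\|<\infty$. The main obstacle is the second step: arranging the exponents so that the cubic nonlinearity maps $S_T$ into its dual \emph{globally} in time. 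The gain of time integrability through the interpolation $L^8_tL^4_x\hookleftarrow L^{8/3}_tL^4_x\cap L^\infty_tL^4_x$ is precisely what makes the estimate summable over the entire half-line rather than only locally, and hence what upgrades local existence to scattering-ready global control.
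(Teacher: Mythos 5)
Your argument is correct, but it takes a genuinely different route from the paper. The paper proves Proposition \ref{prop:scat} by purely variational means: by Theorem \ref{thm:global} it suffices to check that $Q(\psi_0)>0$ and $E(\psi_0)<\gamma(\|\psi_0\|_2^2)$ when $\|\psi_0\|$ is small. The first condition follows from the Gagliardo--Nirenberg bound $Q(u)\geq A(u)(1+C(\lambda_1-\tfrac43\pi\lambda_2)A(u)^{1/2}\|u\|_2)$; the second from applying the same inequality to a ground state $u_c\in V(c)$ (where $Q(u_c)=0$ and $E(u_c)=\tfrac16A(u_c)$), which forces $A(u_c)\to\infty$ and hence $\gamma(c)\to+\infty$ as $c\to0$. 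The uniform bound on $\|\psi(t)\|$ then comes from the identity $E-\tfrac13Q=\tfrac16A$ together with the fact that $Q(\psi(t))$ remains positive along the flow; no dispersive estimate is used at this stage. Your Strichartz bootstrap is a valid alternative and does close: the pair $(8/3,4)$ is admissible, the H\"older splitting into $L^{8/5}_tL^{4/3}_x$ and the interpolation of $L^{8}_tL^{4}_x$ between $L^{8/3}_tL^{4}_x$ and $L^{\infty}_tL^{4}_x$ (via $H^1\hookrightarrow L^4$) check out, and the $L^p$-boundedness of $K\star$ reduces the dipolar term to the cubic one. What each approach buys is different. Yours is self-contained, works identically in the stable and unstable regimes, and already produces the global $L^{8/3}_tW^{1,4}_x$ bound that the paper only obtains afterwards in the proof of Theorem \ref{thm:scat} --- where Proposition \ref{prop:scat} is precisely the input giving the a priori $L^\infty_tH^1$ control --- so your argument in effect merges the proposition with the scattering theorem. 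The paper's proof is softer (conservation laws plus the mountain-pass level only), yields as a byproduct global existence on the much larger set $\{Q(u_0)>0,\ E(u_0)<\gamma(c)\}$ of Theorem \ref{thm:global}, and records the divergence $\gamma(c)\to+\infty$ as $c\to0$, which is of independent interest elsewhere in the paper.
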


\begin{proof}
The proof in the stable regime is a direct consequence of the energy conservation since $E(u)$ is then coercive \cite{BaCaWa,CaHa} and the global well-posedness holds for any initial data in $H$.  Under conditions \eqref{AS} there exists initial data that blows up in finite time and hence not all initial data have bounded kinetic energy for all times.  We consider for simplicity the case $\lambda_2>0$, $\lambda_1-\frac{4}{3}\pi \lambda_2<0$, the other case is identical. According to Theorem \ref{thm:global} we just need to prove that when $||\psi_0||$ is small one has $Q(\psi_0) >0$ and $E(\psi_0) < \gamma(||\psi_0||_2^2)$.  Observe that thanks to Plancherel identity we can write $B(u)$ as
$$B(u)=(\lambda_1-\frac 43 \pi \lambda_2)||u||_4^4+\lambda_2 \int_{\R^3} (\tilde K\star |u|^2)|u|^2dx$$
where the fourier transform of $\tilde K$ is $\hat{\tilde K}=4 \pi \frac{|\xi_3|^2}{|\xi|^2}$.
Hence
$$Q(u)\geq A(u)+ \frac 32 (\lambda_1-\frac 43 \pi \lambda_2)||u||_4^4\geq A(u)\left(1 +C(\lambda_1-\frac 43 \pi \lambda_2)A(u)^{\frac 12}||u||_2\right).$$
In particular $Q(u)>0$ when $||u||$ is sufficiently small. Now consider the ground states energy $\gamma(c)$ and let $u_c \in S(c)$ be a groundstate.
We have
$$0=Q(u_c)\geq A(u_c)\left(1 +C(\lambda_1-\frac 43 \pi \lambda_2)A(u_c)^{\frac 12}||u_c||_2\right)$$
which implies that
$$\lim_{c \rightarrow 0} A(u_c)=+\infty.$$
Now since $E(u_c)=\frac 16 A(u_c)$ we deduce that  $\lim_{c \rightarrow 0} \gamma(c)=+\infty.$ Thus when $||\psi_0||$ is small we certainly have $E(\psi_0) < \gamma(||\psi_0||_2^2)$ and the proof is completed.
\end{proof}
\begin{proof}[Proof of Theorem \ref{thm:scat}]
We follow the classical strategy to show that $||\psi||_{L^p_tW^{1,q}_x}$ is globally bounded in time: this implies scattering. 
The admissible pair that we use is $(p,q)=(\frac{8}{3},4)$ with conjugated pair $(p',q')=(\frac 85, \frac 43)$. By using the  Duhamel formula we have
\begin{eqnarray}||\psi||_{L^{\frac{8}{3}}_tW^{1,4}_x}\leq ||U(t)\psi_0||_{L^{\frac{8}{3}}_tW^{1,4}_x} + \lambda_1 ||\int_0^t U(t-s)(|\psi|^2\psi)(s)ds||_{L^{\frac{8}{3}}_tW^{1,4}_x} + \nonumber \\
+\lambda_2|| \int_0^t U(t-s)((K\star |\psi|^2)\psi)(s)ds||_{L^{\frac{8}{3}}_tW^{1,4}_x}. \nonumber
\end{eqnarray}
Using Strichartz  estimates \eqref{eq:se1}, \eqref{eq:se2} we get
 \begin{eqnarray}||\psi||_{L^{\frac{8}{3}}_tW^{1,4}_x}\leq c||\psi_0||_{H^1} + c |||\psi|^2\psi||_{L^{\frac{8}{5}}_tW^{1,\frac{4}{3}}_x} + \nonumber \\
+c||(K\star |\psi|^2)\psi||_{L^{\frac{8}{5}}_tW^{1,\frac{4}{3}}_x}. \nonumber
\end{eqnarray}
First  we estimate the terms $|||\psi|^2\psi||_{L^{\frac{8}{5}}_tL^{\frac{4}{3}}_x}$ and $||(K\star |\psi|^2)\psi||_{L^{\frac{8}{5}}_tL^{\frac{4}{3}}_x}.$
By H\"{o}lder  inequality we have
$$|||\psi|^2\psi||_{L^{\frac{8}{5}}_tL^{\frac{4}{3}}_x}\leq c||\psi||_{L^{\frac{8}{3}}_tL^{4}_x}|||\psi|^2||_{L^{4}_tL^{2}_x}=c||\psi||_{L^{\frac{8}{3}}_tL^{4}_x}||\psi||_{L^{8}_tL^{4}_x}^2$$
and 
$$||(K\star |\psi|^2)\psi||_{L^{\frac{8}{5}}_tL^{\frac{4}{3}}_x}\leq c ||\psi||_{L^{\frac{8}{3}}_tL^{4}_x}||K\star |\psi|^2||_{L^{4}_tL^{2}_x}^2\leq c ||\psi||_{L^{\frac{8}{3}}_tL^{\frac{4}{3}}_x}||\psi||_{L^{8}_tL^{4}_x}^2.$$
Notice that in the last step we used  that $||K\star f||_p\leq c||f||_p$, namely the $L^p-L^p$ continuity of $K$ established in \cite[Lemma 2.1]{CMS}.
Now using Proposition \ref{prop:scat}  and Sobolev embedding $$||\psi||_{L^{8}_tL^{4}_x}^2\leq ||\psi||_{L^{\frac{8}{3}}_tL^{4}_x}^{\frac{2}{3}}||\psi||_{L^{\infty}_tL^{4}_x}^{\frac 43}\leq c ||\psi||_{L^{\frac{8}{3}}_tL^{4}_x}^{\frac{2}{3}}||\psi||_{L^{\infty}_tH^{1}_x}^{\frac 43}\leq c  ||\psi||_{L^{\frac{8}{3}}_tL^{4}_x}^{\frac{2}{3}}$$
and we obtain 
$$||\psi|^2\psi||_{L^{\frac{8}{5}}_tL^{\frac{4}{3}}_x}\leq c||\psi||_{L^{\frac{8}{3}}_tL^{4}_x}^{\frac 53}.$$
Now  we estimate the terms $||\nabla (|\psi|^2\psi)||_{L^{\frac{8}{5}}_tL^{\frac{4}{3}}_x}$ and $||\nabla (K\star |\psi|^2)\psi||_{L^{\frac{8}{5}}_tL^{\frac{4}{3}}_x}$.
By H\"{o}lder  inequality again and arguing as before we get
$$||\nabla (|\psi|^2\psi)||_{L^{\frac{8}{5}}_tL^{\frac{4}{3}}_x}\leq c|| \nabla \psi ||_{L^{\frac{8}{3}}_tL^{4}_x}||\psi^2||_{L^{4}_tL^{2}_x}=c||\nabla \psi||_{L^{\frac{8}{3}}_tL^{4}_x}||\psi||_{L^{8}_tL^{4}_x}^2\leq c  ||\psi||_{L^{\frac{8}{3}}_tW^{1,4}_x}^{\frac{5}{3}}.$$
The term $||(K\star |\psi|^2)\nabla \psi||_{L^{\frac{8}{5}}_tL^{\frac{4}{3}}_x}$ behaves identically
$$||(K\star |\psi|^2)\nabla \psi||_{L^{\frac{8}{5}}_tL^{\frac{4}{3}}_x}\leq c ||\nabla \psi||_{L^{\frac{8}{3}}_tL^{4}_x}||K\star |\psi|^2||_{L^{4}_tL^{2}_x}\leq c ||\nabla \psi||_{L^{\frac{8}{3}}_tL^{4}_x}||\psi||_{L^{8}_tL^{4}_x}^2\leq c  ||\psi||_{L^{\frac{8}{3}}_tW^{1,4}_x}^{\frac{5}{3}}.$$
The last term to compute is $||(K\star \nabla |\psi|^2) \psi||_{L^{\frac{8}{5}}_tL^{\frac{4}{3}}_x}.$ For this term we argue as before
$$||(K\star \nabla |\psi|^2) \psi||_{L^{\frac{8}{5}}_tL^{\frac{4}{3}}_x}\leq c||\psi||_{L^{8}_tL^{4}_x}||K\star \nabla |\psi|^2||_{L^{2}_tL^{2}_x}\leq c||\psi||_{L^{8}_tL^{4}_x}||(\nabla \psi) \psi||_{L^{2}_tL^{2}_x}$$
and therefore by H\"{o}lder  inequality 
$$||(K\star \nabla |\psi|^2) \psi||_{L^{\frac{8}{5}}_tL^{\frac{4}{3}}_x}\leq ||\nabla \psi||_{L^{\frac 83}_tL^{4}_x}||\psi||_{L^{8}_tL^{4}_x}^2\leq  c ||\psi||_{L^{\frac{8}{3}}_tW^{1,4}_x}^{\frac{5}{3}}.$$
Eventually we proved that
\begin{equation}\label{eq:finalscat}
||\psi||_{L^{\frac{8}{3}}_tW^{1,4}_x}\leq c||\psi_0||_{H^1}+c||\psi||_{L^{\frac{8}{3}}_tW^{1,4}_x}^{\frac 53}.
\end{equation}
Now calling $||\psi||_{L^{\frac{8}{3}}_tW^{1,4}_x}=y$ and $||\psi_0||=b$ and looking at the function $f(y)=y-b-y^{\frac 53}$
we notice that if $b$ is sufficiently small then $\left\{ y \ \ s.t. \ \ f(y)\leq 0 \right\}$ has two connected components.
This implies that choosing  $||\psi_0||$ sufficiently small we obtain, for some $C>0$,
\begin{equation}\label{eq:stricbound}
||\psi||_{L^{\frac{8}{3}}_{[0, \infty]}W^{1,4}_x}\leq C.
\end{equation}
Scattering follows now from classical arguments, see e.g \cite{TC}. To conclude it is indeed enough  to show that $U(-t)\psi(t) \rightarrow \psi_{+}$ in $H^1(\R^3)$. Notice  that for $0<t<\tau$ and calling $g:= \lambda_1 |\psi|^2 \psi + \lambda_2 (K \star |\psi|^2) \psi$
 and $v(t):=U(-t)\psi(t)$,  by \eqref{eq:stricbound} one gets
$$||v(t)-v(\tau)||_{H^1}\leq c ||g(\psi)||_{L^{\frac{8}{5}}_{[t, \tau]} W^{1,\frac{4}{3}}_x}\rightarrow_{t, \tau \rightarrow \infty} 0.$$
Therefore it exists $\psi_{+}$ such that $\lim_{t \rightarrow \infty} ||v(t) -\psi_{+}||=0$ and hence
$$\lim_{t \rightarrow \infty}||\psi(t) -U(t)\psi_{+}||=\lim_{t \rightarrow \infty} ||U(-t)\psi(t)- \psi_{+}||=0.$$
\end{proof}

\section{Proof of Theorem \ref{thm:instability}}
The proof of Theorem  \ref{thm:instability} is standard and follows the original approach by Glassey \cite{G} and Berestycki-Cazenave \cite{BECA}. We recall the virial identity, see \cite{CMS}  and \cite{Lu},
\begin{equation}\label{virial}
\frac{\mathrm{d}^2}{\mathrm{d} t^2} \left \|xv(t)\right\|_2^2=2\int_{\R^3} |\nabla v|^2dx+3 \int_{\R^3} \lambda_1|v|^4+\lambda_2(K\star |v|^2)|v|^2 dx =2 Q(v)
\end{equation}
and the fact that all real positive solutions of \eqref{eq:main} belongs to $\Sigma$ as given in \eqref{Sigma}. This follow from the decay estimates obtained in \cite{AS}, see also \cite{TC}. \medskip

For any $c>0$, we define the set
\begin{eqnarray*}
\Theta= \left \{v \in H\setminus \{0\} \ s.t. \  E(v)<E(u_c),\ \left \| v \right \|_2^2=\left \| u_c \right \|_2^2, \ Q(v)<0   \right \}.
\end{eqnarray*}
The set $\Theta$ contains elements arbitrary close to $u_c$ in $H$. Indeed, letting $v_0(x)=u_c^{\lambda}=\lambda^{\frac{3}{2}}u_c(\lambda x)$, with $\lambda <1$, we see from Lemma \ref{lem:growth} that $v_0 \in \Theta$ and that $v_0 \to u_c$ in $H$ as $\lambda \to 1$. \medskip
Let $v(t)$ be the maximal solution of \eqref{eq:evolutionbis} with initial datum $v(0)=v_0$ and  $T \in (0, \infty]$ the maximal time of existence. Let us show that $v(t) \in \Theta$ for all $ t \in [0,T)$. From the conservation laws
$$\left \| v(t) \right \|_2^2 = \left \| v_0 \right \|_2^2 =\left \| u_c \right \|_2^2,$$ and  $$E(v(t))=E(v_0)<E(u_c).$$ Thus it is enough to verify $Q(v(t))<0$. But $Q(v(t)) \neq 0$  for any $t \in (0, T)$. Otherwise, by the definition of $\gamma(c)$, we would get for a $t_0 \in (0,T)$ that $E(v(t_0)) \geq E(u_c)$ in contradiction with $E(v(t)) < E(u_c)$. Now by continuity of $Q$ we get that $Q(v(t))<0$ and thus that $v(t) \in \Theta$ for all $ t \in [0,T)$.
Now we claim that there exists $\delta >0$, such that
\begin{eqnarray}\label{le2.4}
Q(v(t))\leq - \delta , \ \forall t \in [0,T).
\end{eqnarray}
Let $t \in [0,T)$ be arbitrary but fixed and set $v = v(t)$. Since $Q(v) <0$ we know by Lemma \ref{lem:growth} that $\lambda^{\star}(v)<1$ and that $\lambda \longmapsto E(v^{\lambda})$ is concave on $[\lambda ^{\star}, 1)$.
Hence
\begin{eqnarray*}
E(v^{\lambda^{\star}})-E(v) &\leq& (\lambda^{\star}-1) \frac{\partial}{\partial \lambda}E(v^{\lambda})\mid _{\lambda =1} \\
&=& (\lambda^{\star}-1) Q(v).
\end{eqnarray*}
Thus, since $Q(v(t))<0$, we have
$$E(v)-E(v^{\lambda^{\star}})\geq (1- \lambda^{\star})Q(v)\geq Q(v).$$
It follows from $E(v)=E(v_0)$ and $v^{\lambda^{\star}} \in V(c)$ that
$$Q(v) \leq E(v)-E(v^{\lambda^{\star}})\leq E(v_0)-E(u_c)$$
and this proves the claim. Now from the virial identity \eqref{virial} we deduce that  $v(t)$ must blow-up in finite time. Recording that $v_0$ has been taken arbitrarily close to $u_c$, this ends the proof of the theorem.

\section{Proof of Theorem \ref{thm: mainn} and Corollary \ref{Cor}}

This section is devoted to the proof of Theorem \ref{thm: mainn}. Under the scaling given by \eqref{def:sca} we  have 
\begin{equation}\label{def:mainsca}
 t \to E_a(u^{t})=\frac{t^2}{2} A(u) + \frac{a^2}{2t^2} D(u)
 +\frac{t^3}{2}B(u) \quad \mbox{where we have set} \quad D(u)= \int_{\R^3}|x|^2 u^2 dx.
\end{equation}
Let us also define $$Q_a(u):= A(u) - a^2 D(u) 
+\frac{3}{2}B(u).$$
The proof of Theorem \ref{thm: mainn} requires several steps. \medskip

{\bf Step 1: } {\it  There exist a $a_0 >0$ such that, for any $a \in (0,a_0], \, E_a(u)$ has a topological local minima/mountain-pass geometry on $S(c)$.} \medskip

From \eqref{def: mp} we know that there exists a $k>0$ such that
\begin{equation}\label{def: mpp}
0 \leq \inf_{u \in A_{k}}E(u) \leq  \sup_{u\in A_{k}} E(u) < \inf_{u \in C_{2k}} E(u).
\end{equation}
Also by Theorem \ref{thm:standing} there exists a $u_c \in V(c)$ such that $E(u_c)= \gamma(c)$. We consider the path
\begin{equation}\label{sparticulier}
t \to v^t(x) := t^{\frac{3}{2}}u_c(tx), \quad t>0.
\end{equation}
First we fix a $t_1 <<1$ such that $v^{t_1}\in A_{k}$. Then, taking $a>0$ sufficiently small so that
$$\frac{a^2}{2t_1^2}D(v^{t_1}) < \inf_{u \in C_{2k}}E(u) - \sup_{u \in A_{k}}E(u),$$
we obtain that
\begin{equation}\label{test}
0< E_a(v^{t_1}) < \inf_{u \in C_{2k}}E(u) \leq \inf_{u \in C_{2k}}E_a(u).
\end{equation}
Thus in view of \eqref{blow} it is reasonable to search for a minima of $E_a(u)$ inside the set $A_{2k}$. \medskip

Now, since $D(v^t) \to 0$ as $t \to + \infty$, we still have that $E_a(v^t) \to - \infty$ as $t \to + \infty$. We fix a $t_2 >>1$ such that $E_a(v^{t_2}) <0$ and  define
$$\Gamma_a(c) =\{g \in C([0,1],S(c)) \ s.t. \ \ g(0)=v^{t_1}, g(1)=v^{t_2}\}.$$
Clearly $\Gamma_a(c) \neq \emptyset$ and from \eqref{test} it holds that
$$\gamma_a(c) := \inf_{g \in \Gamma_a(c)} \max_{t \in [0,1]}E_a(g(t)) > \max \{E_a(v^{t_1}), E_a(v^{t_2})\}>0.$$
Namely $E_a(u)$ has a {\it mountain pass geometry} on $S(c)$. \medskip

{\bf Step 2: } {\it  Existence of a topologial local minimizer.} \medskip

Let us  prove that there exists a $u_a^1 \in A_{2k}$ which satisfies
$$E_a(u_a^1)=\inf_{u \in A_{2k}} E_a(u)>0.$$
Because of \eqref{test} necessarily $u_a^1 \notin C_{2k}$ and thus $u_a^1$ will be a topological local minimizer for $E_a(u)$ restricted to $S(c)$. Let $(u_n) \subset A_{2k}$ be an arbitrary minimizing sequence     associated to
$$I_a(c)=\inf_{u \in A_{2k}} E_a(u).$$
This sequence, being in $A_{2k}$, is bounded and we can assume that it converges weakly to some $u_a^1$. To prove the strong convergence, we use the compactness of the embedding $\Sigma \hookrightarrow L^p(\R^3)$ for $p \in [2,6)$.  This gives directly that $u_a^1 \in S(c)$. Also since, for some $C>0$, 
\begin{equation}\label{ControlB}
|B(u_n-u_a^1)|\leq C ||u_n-u_a^1||_4^4=o(1)
\end{equation}
we get that
$$E_a(u_a^1)\leq \liminf E_a(u_n)=I_a(c).$$
This implies that $E_a(u_a^1)=I_a(c)$ and $A(u_n - u_a^1) \to 0.$ Thus $u_n \to u_a^1$ and $u_a^1$ is a minimizer of $I_a(c)$.  Note that since $I_a(|u|) \leq I_a(u), \forall u \in S(c)$ we can assume without restriction that $u_a^1$ is real. More generally a description of the set of topological local minimizers as in Lemma \ref{description} is available in a standard way, see for example \cite{CaHa}. \medskip

{\bf Step 3: } {\it  Existence of a mountain-pass critical point.} \medskip

Let us suppose for a moment the existence of a \emph{bounded} PS sequence  such that $E_a(u_n) \to \gamma_a(c)$. The proof of such claim requires some work. We posponed it until the Appendix.  The strong convergence then follows from the following equivalent of Proposition \ref{prop}.
\begin{prop}\label{propa}
Let $(u_n) \subset S(c)$ be a bounded Palais-Smale in $\Sigma$ for $E_a(u)$ restricted to $S(c)$ such that $E_a(u_n) \to \gamma_a (c)$. Then there is a sequence $(\mu_n)\subset \mathbb{R}$, such that, up to a subsequence:\\
(1)\ $u_n \rightharpoonup u_a^2$ weakly in $\Sigma$;\\
(2)\ $\mu_n \to \mu$ in $\mathbb{R}$;\\
(3) $- \frac{1}{2}\Delta u_n + \frac{a^2}{2}|x|^2u_n + \lambda_1 |u_n|^2 u_n + \lambda_2 (K \star  |u_n|^2) u_n + \mu u_n  \to 0$ in $\Sigma^{-1}$;\\
(4) $- \frac{1}{2}\Delta u_a^2 + \frac{a^2}{2}|x|^2u_a^2 + \lambda_1 |u_a^2|^2 u+ \lambda_2 (K \star  |u_a^2|^2) u+ \mu u_a^2= 0$ in $\Sigma^{-1}.$
\end{prop}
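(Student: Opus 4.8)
The plan is to follow the proof of Proposition \ref{prop}, for which \cite[Proposition 4.1]{BJL} is the model, the only genuinely new ingredients being the harmonic term $\frac{a^2}{2}|x|^2 u$ and the fact that we now work in $\Sigma$ rather than in $H$. This latter change is in fact an advantage: the compactness of the embedding $\Sigma \hookrightarrow L^p(\R^3)$ for $p \in [2,6)$ makes the passage to the limit in the nonlinear terms essentially automatic, so that no concentration-compactness or translation argument as in the proof of Theorem \ref{thm:standing} is needed here.

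First I would produce the Lagrange multipliers. Since $(u_n) \subset S(c)$ is a Palais--Smale sequence for $E_a$ constrained to the $C^1$ manifold $S(c)$, the standard multiplier rule yields a sequence $(\mu_n) \subset \R$ with
$$\frac{1}{2}E_a'(u_n) + \mu_n u_n \to 0 \quad \text{in } \Sigma^{-1},$$
which, written out, is precisely statement (3) with $\mu_n$ in place of $\mu$. To bound $(\mu_n)$ I would test this relation against $u_n$: using that $\langle E_a'(u_n), u_n\rangle = A(u_n) + a^2 D(u_n) + 2B(u_n)$ together with $\|u_n\|_2^2 = c$, one gets
$$\mu_n c = -\frac{1}{2}\bigl(A(u_n) + a^2 D(u_n) + 2B(u_n)\bigr) + o(1).$$
Since $(u_n)$ is bounded in $\Sigma$, the three quantities $A(u_n)$, $D(u_n)$, $B(u_n)$ are all bounded (for $B$ one invokes the Gagliardo--Nirenberg inequality and the $L^p$ continuity of $K\star$), hence $(\mu_n)$ is bounded and, up to a subsequence, $\mu_n \to \mu$; this is (2). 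Statement (1) then follows at once from the boundedness of $(u_n)$ in the Hilbert space $\Sigma$.

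To obtain (4) I would pass to the limit in the relation of (3), tested against an arbitrary $\varphi \in \Sigma$. The linear part $-\frac{1}{2}\Delta u_n + \frac{a^2}{2}|x|^2 u_n$ converges by weak convergence in $\Sigma$. For the remaining terms I would use the compact embedding $\Sigma \hookrightarrow L^4(\R^3)$ to get $u_n \to u_a^2$ strongly in $L^4$; then $|u_n|^2 u_n \to |u_a^2|^2 u_a^2$ in $L^{4/3}$, and, since $|u_n|^2 \to |u_a^2|^2$ in $L^2$ while $K\star$ is continuous on $L^2$ by \cite[Lemma 2.1]{CMS}, also $(K\star|u_n|^2)u_n \to (K\star|u_a^2|^2)u_a^2$ in $L^{4/3}$ by H\"older. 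Finally $(\mu_n-\mu)u_n \to 0$ in $\Sigma^{-1}$ because $\mu_n \to \mu$ and $(u_n)$ is bounded in $L^2$, which simultaneously upgrades the multiplier relation to statement (3) with the limit $\mu$ and, combined with the convergences just listed, yields $\frac{1}{2}E_a'(u_a^2) + \mu u_a^2 = 0$, that is (4).

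The only point requiring any care is the passage to the limit in the nonlocal convolution term, where the strong $L^p$ convergence furnished by the compact embedding must be combined with the $L^p$ continuity of $K\star$; the harmonic potential term, by contrast, is handled effortlessly, precisely because the compactness is already encoded in the space $\Sigma$.
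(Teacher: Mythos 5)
Your proposal is correct and is precisely the ``standard'' argument the paper has in mind: the paper gives no explicit proof of Proposition \ref{propa}, deferring (via the analogous Proposition \ref{prop}) to \cite[Proposition 4.1]{BJL}, and your steps --- extracting $\mu_n$ from the constrained Palais--Smale condition, bounding it by testing against $u_n$, and passing to the limit in the nonlinear and convolution terms using the compact embedding $\Sigma \hookrightarrow L^p(\R^3)$, $p\in[2,6)$, together with the $L^p$-continuity of $K\star$ --- are exactly the intended ones. No gaps.
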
 
Indeed, letting (3) and (4) act on $u_n$ we get 
$$\frac{1}{2}A(u_n)+\frac{a^2}{2}D(u_n)+B(u_n)+c \mu=o(1)$$
$$\frac{1}{2}A(u_a^2)+\frac{a^2}{2}D(u_a^2)+B(u_a^2)+c \mu=0.$$
Thus by substraction and using the splittings \eqref{Splittings} we get that
$$ \frac{1}{2}A(u_n -u_a^1) + \frac{a^2}{2}D(u_n - u_a^2) + B(u_n - u_a^2) = o(1).$$
Thus from \eqref{ControlB} we deduce that  $A(u_n-u_a^2)=o(1)$, $D(u_n-u_a^2)=o(1).$ Namely that $u_n \to u_a^2$.
\medskip

{\bf Step 4: }  {\it $E_a(u_a) \to 0$ and $E_a(u_a^2) \to \gamma(c) $ as $a \to 0$. } \medskip

To prove that $E_a(u_a) \to 0$ as $a \to 0$ we just need to observe that $k>0$ in  \eqref{def: mpp} can be taken arbitrarily small and that from \eqref{useful} we readily have that 
$\inf_{u \in C_{2k}}E(u) \to 0$ as $k \to 0$. Then the conclusion follows from \eqref{test} since $E_a(u_a) \leq E_a(v^{t_1}).$ \medskip

To show that $E_a(u_a^2) \to \gamma(c) $ namely that $\gamma_a(c) \to \gamma(c)$ as $a \to 0$ it suffices to observe that, on one hand, since $\Gamma_a(c) \subset \Gamma(c)$ and $E_a(u) \geq E(u)$ for all $u \in \Sigma$, then $\gamma_a(c) \geq \gamma(c)$. On the other hand considering the path \eqref{sparticulier} we have, as $a \to 0$,
$$ \gamma_a(c) \leq \sup_{t \in [t_1, + \infty]}E_a(v^t) \leq \sup_{t \in [t_1, + \infty]}E(v^t) + \frac{a^2}{2t_1^2}D(u_c) = \gamma(c) + \frac{a^2}{2t_1^2}D(u_c) \to \gamma(c).$$

{\bf Step 5: }  {\it $u_a^1$ is a ground state. In addition any ground state $u_a \in S(c)$ for $E_a(u)$ on $S(c)$ is a topological local minimizer for $E_a(u)$ on $A_{2k}$ and it satisfies $A(u_a) \to 0$ and $E_a(u_a) \to 0$ as $a \to 0$. } \smallskip

Notice that any constrained critical point $v$ fulfills $Q_a(v)=0$. From the definition of $E_a(u)$ and $Q_a(u)$ we get 
\begin{equation*}
E_a(v) - \frac{1}{3}Q_a(v) = \frac{1}{6}A(v)  + \frac{5}{6}a^2 D(v)
\end{equation*}
which implies  that
\begin{equation}\label{eq:ggr}
E_a(v) \geq  \frac{1}{6}A(v).
\end{equation}
Now let $u_a$ be  a constrained critical point such that  $E_a(u_a) \leq E_a(u_a^1)$. From  Step 4 we know that  $E_a(u_a^1) \to 0$ when $a \to 0$ and together with \eqref{eq:ggr} this implies that $A(u_a)\to 0$. Notice that $k$ does not depend on $a$ and therefore  $u_a \in A_{2k}$ when $a$ is sufficiently small. By definition of 
$u_a^1$ we obtain the opposite inequality $E_a(u_a) \geq  E_a(u_a^1)$. \medskip

\begin{proof}[Proof of Corollary \ref{Cor}]
In order to show that small data scattering cannot hold under the assumption of Theorem \ref{thm: mainn} it is sufficient to prove, for $a>0$ fixed, that our topological local minimizers $u_a$ fulfill $\lim_{c\rightarrow 0}||u_a||_{\Sigma}=0$. In turn it is sufficient to show, fixing an arbitrary $\delta >0$, that for any sufficiently small $c>0$, $u_a \in A_{2\delta}$ and $aD(u_a) \leq 2\delta$. \medskip

To establish this property we fix an arbitrary $u_0 \in S(1)$ with $u_0 \in A_{\delta}$ and consider again the mapping from $S(1)$ to $S(c^2)$ given by $u^c(x) = c^{-1/2}u(\frac{x}{c})$. After direct calculations we have that
$$||u_0^{c}||_2^2= c^2, \, A(u_0^c) = A(u_0) = \delta, \, B(u_0^c) = c B(u_0) \mbox{ and } D(u_0^c) = c^4 D(u_0).$$
This leads to 
$$E_a(u_a^c) = \delta + a^2 c^4 D(u_0) + cB(u_0).$$
Thus on one hand, when $c>0$ is sufficiently small, we have that
\begin{equation}\label{e1}
E_a(u_a^c) < \frac{3 \delta}{2}.
\end{equation}
On the other hand, we have that for $A(u) = 2\delta$ and $c>0$ small enough
\begin{equation}\label{e2}
E_a(u) \geq E(u) \geq \frac{3 A(u)}{2} = \frac{3 \delta}{2}.
\end{equation}
In view of \eqref{e1} and \eqref{e2} we deduce that $u_a \in A_{2\delta}$ for any $c >0$ small enough. Clearly also \eqref{e1} implies that $aD(u_0) \leq 2 \delta$.
\end{proof}


\section{Proof of Theorem \ref{stability} }
In this section we prove Theorem \ref{stability} following the ideas of \cite{CL}. First of all we recall
the definition of orbital stability.
We define
$$S_{a}=\{e^{i\theta }
u(x) \in S(c) \ s.t. \ \theta\in [0,2\pi), \|u\|_{2}^2=c, \ 
 E_a(u)=E_a(u_a^1), \ A(u)\leq 2k\}.$$
 Notice that here we are considering only the set of topological local minimizers. 
We say that $S_{a}$ is {\sl orbitally stable} if
for every $\varepsilon>0$ there exists $\delta>0$ such that for any $\psi_{0}\in \Sigma$ with
$\inf_{v\in S_{a}}\|v-\psi_{0}\|_{\Sigma}<\delta$ we have
$$\forall \, t>0 \ \ \ \inf_{v\in S_{a}
} \|\psi(t,.)-v\|_{\Sigma}<\varepsilon$$
where $\psi(t,.)$ is the solution of \eqref{eq:evolutionbis} with initial datum $\psi_{0}$. In order to prove Theorem \ref{stability} we argue by contradiction, i.e we assume that
there exists a $\varepsilon>0$  a sequence of initial
data $(\psi_{n,0})\subset \Sigma$ and  a sequence $(t_{n})\subset\R$ such that the maximal solution $\psi_{n}$ with $\psi_{n}(0,.)=\psi_{n,0}$ satisfies
\begin{equation*}
\lim_{n\rightarrow +\infty}\inf_{v\in S_{a}}\|\psi_{n,0}-v\|_{\Sigma}=0 \ \ \ \text{ and }\ \ \inf_{v\in S_{a}}\|\psi_{n}(t_{n},.)-v\|_{\Sigma}\ge\varepsilon.
\end{equation*}
Without restriction we can assume that $\psi_{n,0}\in S(c)$  such that $(\psi_{n,0})$ is a minimizing sequence for $E_a(u)$ inside $A_{2k}$. Also since $A(\psi_{n,0})\leq 2k$ and
\begin{equation}\label{conservationE}
E_a(\psi_{n}(.,t_{n}))=E_a(\psi_{n,0}),
\end{equation}
also $(\psi_{n}(.,t_{n}))$ is a minimizing sequence for $E_a(u)$ inside $A_{2k}$. Indeed since
$$\inf_{u \in A_{2k}}E_a(u) < \inf_{u \in C_{2k}}E_a(u)$$
by continuity we have that $\psi_{n}(.,t_{n})$ lies inside $A_{2k}$. This proves in particular that $\psi_n$ is global for $n \in \N$ large enough. Now since we have proved, in Step 2 of the proof of Theorem \ref{thm: mainn}, that every minimizing sequence in $A_{2k}$ has a subsequence converging in $\Sigma$ to a topological local minimum on $A_{2k}$ we reach a contradiction.

\section{Proofs of Theorems \ref{thm:signmu} and \ref{asymtotic}}

\begin{proof}[Proof of Theorem \ref{thm:signmu}]

Let $u \in S(c)$ be a topological local minimizer for $E_a(u)$ on $A_{2k}$. In particular it is a solution of
\begin{equation}\label{eq:solu}
- \frac{1}{2}\Delta u + \frac{a^2}{2} |x|^2 u  + \lambda_1 |u|^2 u + \lambda_2 (K \star  |u|^2) u+ \mu u =0.
\end{equation}
Notice also that, as any critical point of $E_a(u)$ on $S(c)$, it satisfies since $Q_a(u)=0$,
\begin{equation}\label{eq:pohomu}
\mu||u||_2^2=\frac 16 \int_{\R^3} |\nabla u|^2dx -\frac 56 a^2 \int_{\R^3} |x|^2|u|^2dx.
\end{equation}
Finally observe that, thanks to Plancherel identity we can write $B(u)$ as
$$B(u)=(\lambda_1-\frac 43 \pi \lambda_2)||u||_4^4+\lambda_2 \int_{\R^3} (\tilde K\star |u|^2)|u|^2dx$$
where the fourier transform of $\tilde K$ is $\hat{\tilde K}=4 \pi \frac{|\xi_3|^2}{|\xi|^2}.$
From now on we discuss separately the two cases $B(u)\geq0$ and $B(u)<0$.\medskip

{\it Case $B(u)\geq 0.$}  Since $Q_a(u)=0$ the fact that $B(u)\geq 0$ implies that
$\int_{\R^3} |\nabla u|^2dx \leq a^2\int_{\R^3} |x|^2|u|^2dx$. Thus thanks to \eqref{eq:pohomu} we conclude that $\mu<0$. \medskip

Case $B(u)<0.$ \medskip
Any constrained critical point is a critical point of the free functional
$$J_a(u):=\frac 12 E_a(u) +\frac 12 \mu ||u||_2^2.$$ 
Let us first compute  $ \langle   J''_a(u) \varepsilon, \varepsilon \rangle $  where $\varepsilon \in H$ is real valued.
It is easy to show that
\begin{eqnarray}
\frac 12 \langle   J''_a(u) \varepsilon, \varepsilon \rangle =\frac 14 \int_{\R^3} |\nabla \varepsilon|^2dx+\frac 14 a^2\int_{\R^3} |x|^2|\varepsilon|^2dx+\frac{3}{2}(\lambda_1-\frac 43 \pi \lambda_2)\int_{\R^3} |u|^2|\varepsilon|^2dx+ \nonumber \\
+\frac{\lambda_2}{2} \int_{\R^3} \left( \tilde K\star |u|^2\right) |\varepsilon|^2dx+\lambda_2 \int_{\R^3} \left( \tilde K\star |u\varepsilon|\right)|u \varepsilon|dx +\frac 12 \mu ||\varepsilon||_2^2 \label{eq:j22}.
\end{eqnarray} 
Using the fact that $u$ solves \eqref{eq:solu} we then get
$$\frac 12 \langle   J''_a(u) u, u  \rangle=(\lambda_1-\frac 43 \pi \lambda_2)\int_{\R^3} |u|^4 dx +\lambda_2 \int_{\R^3} \left( \tilde K\star |u|^2\right) |u|^2dx=B(u).$$
Now we claim that 
 $$ \langle   E''_a(u) \varepsilon, \varepsilon \rangle \geq c\left (\int_{\R^3} |\nabla \varepsilon|^2dx+a^2\int_{\R^3} |x|^2\varepsilon^2dx\right).$$
The claim clearly  implies that $\mu<0$.
 To prove the claim we shall use the fact, established in Step 5 of the proof of Theorem \ref{thm: mainn}, that $\int_{\R^3} |\nabla u|^2dx\rightarrow 0$  when $a\rightarrow 0$.
 For simplicity we consider only the case $\lambda_2>0$ and
 $\lambda_1-\frac 43 \pi \lambda_2<0$, the other one is identical.  It suffices to look at the functional 
 $$\tilde E_a(u):=\frac{1}{2}||\nabla u||_2^2 + \frac{a^2}{2} |||x|u||_2^2 + \frac 12 (\lambda_1-\frac 43 \pi  \lambda_2)\int_{\R^3} |u|^{4}dx.$$
Now, by H\"{o}lder   and Sobolev inequalities
we have
$$\langle \tilde E''_a(u)\varepsilon, \varepsilon \rangle \geq  \int_{\R^3} |\nabla \varepsilon|^2dx +a\int_{\R^3} |x|^2|\varepsilon|^2 dx +6(\lambda_1-\frac{4}{3}\pi \lambda_2)S^2 (\int_{\R^3}  |\nabla \varepsilon|^2dx)||u||_{3}^2$$
( S is the Sobolev best constant $||\varepsilon||_6\leq S||\nabla \varepsilon||_2$) and this implies that
$$\langle E''_a(u)\varepsilon, \varepsilon \rangle \geq   \left(\int_{\R^3} |\nabla \varepsilon_1|^2dx \right)(1+6(\lambda_1-\frac{4}{3}\pi \lambda_2)S^2||u||_3^2)+a\int_{\R^3} |x|^2|\varepsilon_1|^2 dx$$
Thus  if 
\begin{equation}\label{control}
||u||_3\leq \frac{1}{S\sqrt{6(-\lambda_1+\frac{4}{3}\pi \lambda_2)}}
\end{equation}
we obtain that $\langle \tilde E''_a(u)\varepsilon, \varepsilon \rangle \geq 0$. But \eqref{control} happens when $a\rightarrow 0$ thanks to Gagliardo-Nirenberg inequality
$$||u||_3^2\leq C||u||_2||\nabla u||_2$$
and the fact that  $\int_{\R^3} |\nabla u|^2dx\rightarrow 0$  when $a\rightarrow 0$.
\end{proof}

\begin{lem}[Asymptotics for $\mu$]
Let $a>0$ be sufficiently small and $u$ be a topological local minimizer for the constrained energy, then the following holds
\begin{enumerate}
\item $\lim_{a \rightarrow  0} \mu=0$;
\item $\mu<-\frac 32 a$ \text{ if } $B(u)\geq 0$;
\item $\mu<-3 a \sqrt{\frac 14 +\frac 32 S^2(\lambda_1-\frac{4}{3}\pi \lambda_2)||u||_3^2}$ \text{ if } $B(u)>0$ and $\lambda_2>0$;
\item $\mu<-3 a \sqrt{\frac 14 +\frac 32 S^2(\lambda_1+\frac{8}{3}\pi \lambda_2)||u||_3^2}$ \text{ if } $B(u)>0$ and $\lambda_2<0.$
\end{enumerate}
\end{lem}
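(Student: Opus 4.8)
The whole lemma is extracted from the Pohozaev-type identity \eqref{eq:pohomu}, which expresses $\mu\|u\|_2^2$ through $A(u)$ and $a^2D(u)$ alone. The plan is to feed into it the two extra constraints that a topological local minimizer satisfies, namely $Q_a(u)=0$ (satisfied by every constrained critical point) and the Heisenberg inequality \eqref{Heisenberg}, and then to argue according to the sign of $B(u)$.

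For (1) I would first note that, since $Q_a(u)=0$, the computation of Step 5 in the proof of Theorem \ref{thm: mainn} gives $E_a(u)=\frac{1}{6}A(u)+\frac{5}{6}a^2D(u)$, a sum of two nonnegative terms. By that same Step 5 we know $A(u)\to 0$ and $E_a(u)\to 0$ as $a\to 0$, whence also $a^2D(u)\to 0$. Substituting $A(u)\to 0$ and $a^2D(u)\to 0$ into \eqref{eq:pohomu}, and recalling that $\|u\|_2^2=c$ is fixed, gives $\mu\to 0$.

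For (2) I would use that $B(u)\ge 0$ together with $Q_a(u)=A(u)-a^2D(u)+\frac{3}{2}B(u)=0$ forces $A(u)\le a^2D(u)$. Heisenberg \eqref{Heisenberg} gives $aD(u)\ge \big(A(u)D(u)\big)^{1/2}\ge \frac{3}{2}c$, hence $a^2D(u)\ge \frac{3}{2}ac$; inserting both $A(u)\le a^2D(u)$ and this lower bound into \eqref{eq:pohomu} yields the stated bound on $\mu$. The inequality is strict because equality in \eqref{Heisenberg} would force $u$ to be a Gaussian, which is incompatible with $u$ solving \eqref{eq:solu}.

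The cases (3) and (4) are the heart of the matter and concern the complementary regime, where $Q_a(u)=0$ now gives $A(u)>a^2D(u)$ so that the sign of the right-hand side of \eqref{eq:pohomu} is no longer transparent. Here I would reuse the coercivity of the second variation established in the proof of Theorem \ref{thm:signmu}, which is available precisely because $A(u)\to 0$ forces $\|u\|_3\to 0$, so that the smallness condition \eqref{control} holds and $E_a''(u)$ is positive. Testing the quantitative estimate \eqref{eq:j22} against $\varepsilon=u$ and using $Q_a(u)=0$ converts coercivity into an upper bound $A(u)\le \big(1+\frac{3}{2}S^2(\lambda_1-\frac{4}{3}\pi\lambda_2)\|u\|_3^2\big)^{-1}a^2D(u)$, with $\lambda_1+\frac{8}{3}\pi\lambda_2$ replacing $\lambda_1-\frac{4}{3}\pi\lambda_2$ when $\lambda_2<0$ (through the corresponding bound on $\hat K$). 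Feeding this together with the Heisenberg lower bound for $a^2D(u)$ into \eqref{eq:pohomu} and balancing the two competing constraints, i.e. solving the resulting quadratic, produces the announced square-root expressions. The genuine obstacle is exactly this last regime: the negativity of $\mu$ is non-elementary and rests on the quantitative coercivity of $E_a''(u)$, which holds only for small $a$, so that pinning down the precise square-root constant (and the split on the sign of $\lambda_2$) requires a careful matching of the coercivity estimate against the Heisenberg inequality.
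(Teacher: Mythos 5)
Your item (1) is fine and matches the paper. The divergence, and the genuine gap, is in items (2)--(4), where the paper does not argue through \eqref{eq:pohomu} at all. For (2) the paper simply multiplies \eqref{eq:solu} by $u$ to get $\mu\|u\|_2^2=-\tfrac12 A(u)-\tfrac{a^2}{2}D(u)-B(u)\le-\tfrac12\bigl(A(u)+a^2D(u)\bigr)$ when $B(u)\ge0$, and then applies the Heisenberg inequality in the form \eqref{eq:hp} with $\omega=a$, i.e. $A(u)+a^2D(u)\ge 3ac$, to get $\mu\le-\tfrac32 a$ in one line. Your chain ($A(u)\le a^2D(u)$ from $Q_a(u)=0$, then $a^2D(u)\ge\tfrac32 ac$, then \eqref{eq:pohomu}) yields $\mu c\le(\tfrac16-\tfrac56)a^2D(u)=-\tfrac23a^2D(u)\le-ac$, i.e. only $\mu\le-a$ with the constants as printed in \eqref{eq:pohomu}; it does not reach the stated constant $\tfrac32$.

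The real problem is in (3)--(4) (which, note, concern the case $B(u)<0$; the ``$B(u)>0$'' in the statement is a slip, since (2) already covers $B(u)\ge0$). The intermediate bound $A(u)\le\bigl(1+\tfrac32 S^2(\lambda_1-\tfrac43\pi\lambda_2)\|u\|_3^2\bigr)^{-1}a^2D(u)$ that you propose to extract cannot come from testing \eqref{eq:j22} at $\varepsilon=u$ together with $Q_a(u)=0$: one has $\tfrac12\langle J_a''(u)u,u\rangle=B(u)$, and $Q_a(u)=0$ with $B(u)<0$ forces $A(u)>a^2D(u)$, while combining \eqref{eq:j22} at $\varepsilon=u$ with the Nehari identity is a tautology. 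The needed information is destroyed the moment you eliminate $\mu$. The paper instead keeps $\mu$ in the inequality: since $\lambda_2>0$ the two $\tilde K$-terms in \eqref{eq:j22} are nonnegative, so
\begin{equation*}
0>B(u)>\tfrac14 A(u)+\tfrac14 a^2D(u)+\tfrac32(\lambda_1-\tfrac43\pi\lambda_2)\|u\|_4^4+\tfrac12\mu\|u\|_2^2;
\end{equation*}
then $\|u\|_4^4\le S^2A(u)\|u\|_3^2$ turns the quartic term into $\beta A(u)$ with $\beta=\tfrac14+\tfrac32 S^2(\lambda_1-\tfrac43\pi\lambda_2)\|u\|_3^2>0$ for $a$ small (this is where $A(u)\to0$, hence $\|u\|_3\to0$ via Gagliardo--Nirenberg, enters --- not through any coercivity of $E_a''$), and \eqref{eq:hp} with the tuned frequency $\omega=a/(2\sqrt{\beta})$ gives $\beta A(u)+\tfrac14 a^2D(u)\ge\tfrac32 a\sqrt{\beta}\,c$, whence $\mu<-3a\sqrt{\beta}$ directly. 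There is no quadratic to solve and no second-order coercivity to invoke; the only input beyond \eqref{eq:j22} is the sign $\tfrac12\langle J_a''(u)u,u\rangle=B(u)<0$ and the choice of $\omega$. Case (4) is the same after rewriting $B(u)$ with the kernel $K_1$ of Fourier transform $-4\pi(\xi_1^2+\xi_2^2)/|\xi|^2$.
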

\begin{proof}
The fact that $\lim_{a \rightarrow  0} \mu=0$ follows  easily from the relations
$$E_a(u) - \frac{1}{3}Q_a(u) = \frac{1}{6}\int_{\R^3} |\nabla u|^2dx  + \frac{5}{6}a^2 \int_{\R^3} |x|^2 |u|^2 dx$$
and 
$$E_a(u)=\mu||u||_2^2+ \frac{5}{3}a^2 \int_{\R^3} |x|^2 |u|^2 dx.$$
The proof of the last three  points follows  from Heisenberg uncertainty principle written in the following form
\begin{equation}\label{eq:hp}
||\nabla u||_2^2 + \omega^2 |||x|u||_2^2-3\omega ||u||_2^2\geq 0  \ \ \ \  \forall u \in \Sigma, \omega>0
\end{equation}
\emph{ Case  $B(u)\geq 0$:}\\
The fact that  $\mu<-\frac 32 a$ follows from \eqref{eq:hp} since  $u$ solves \eqref{eq:solu}.\\
\emph{ Case  $B(u)< 0$ and $\lambda_2>0$:}\\
Here we use \eqref{eq:j22}. Using the fact that $\lambda_2>0$ we get
\begin{eqnarray}
0>\frac 12 \langle   J''_a(u) u, u \rangle >\frac 14 \int_{\R^3} |\nabla u|^2dx+\frac 14 a^2\int_{\R^3} |x|^2|u|^2dx+\\ \nonumber
+\frac{3}{2}(\lambda_1-\frac 43 \pi \lambda_2)\int_{\R^3} |u|^4dx+\frac 12 \mu ||u||_2^2.  \nonumber
\end{eqnarray}
Now arguing as in the proof of Theorem \ref{thm:signmu} we obtain
$$0>(\frac 14 +\frac{3}{2}S^2(\lambda_1-\frac{4}{3}\pi \lambda_2)||u||_3^2)\int_{\R^3} |\nabla u|^2dx+\frac 14 a^2\int_{\R^3} |x|^2|u|^2dx+\frac 12 \mu ||u||_2^2.$$
Calling $\beta=(\frac 14 +\frac{3}{2}S^2(\lambda_1-\frac{4}{3}\pi \lambda_2)||u||_3^2)$ we have from Heisenberg uncertainty principle \eqref{eq:hp}
$$\beta \left(\int_{\R^3} |\nabla u|^2dx+\frac{1}{4\beta} a^2\int_{\R^3} |x|^2|u|^2dx+\frac{1}{2\beta} \mu ||u||_2^2\right)\geq \beta\left( \frac{3a}{2\sqrt{\beta}}+\frac{\mu}{2\beta}\right)||u||_2^2.$$
Remembering that $\beta>0$ for $a>0$ small we obtain the required estimate.\\
\emph{ Case  $B(u)< 0$ and $\lambda_2<0$:}\\
This case is  identical to the previous one just observing that we can write
$$B(u)=(\lambda_1+\frac{8}{3}\pi \lambda_2)||u||_4^4+\lambda_2\int_{\R^3} \left( K_1\star |u|^2\right) |u|^2dx$$ 
where $\hat{K_1}=-4\pi \frac{(\xi_1^2+\xi_2^2)}{|\xi|^2}.$
\end{proof}

\begin{proof}[Proof of Theorem \ref{asymtotic}]

We consider just the case $\lambda_2>0$. Since $\hat K(\xi)=\frac{4}{3}\pi (\frac{2\xi_3^2-\xi_1^2-\xi_2^2}{|\xi|^2})\in [-\frac{4}{3}\pi, \frac{8}{3}\pi]$ writing $\lambda_1 = \lambda_1' + \frac{4}{3}\pi \lambda_2$ we have, when $\lambda_1' \leq 0$,
\begin{equation}\label{bounds}
\lambda_1' \, \leq \, \lambda_1 + \lambda_2 \hat K(\xi) \, \leq \, 4 \pi \lambda_2.
\end{equation}
Thus
\begin{eqnarray*}
Q(u) &\geq & A(u) +\frac{3}{2}\frac{1}{(2\pi)^{3}}\int_{\R^3} (\lambda_1-\frac 43 \pi \lambda_2) |\hat {u^2}|^2d\xi 
\geq  A(u) + C \lambda_1' ||u||_4^4 \\
&\geq & A(u) + \lambda_1' C A(u)^{3/2}||u||_2.
\end{eqnarray*}
In particular, for any $k>0$, taking $\lambda_1' <0$ sufficiently close to $0$ it follows that $Q(u) >0$ on $A_k$. Recording that $Q(u)=0$ for any critical point this proves Point (1). To prove Point (2) first observe that,
\begin{equation}
E(u) \leq  \frac{A(u)}{2} + 4 \pi \lambda_2 ||u||^4  
\leq  \frac{A(u)}{2} + \lambda_2 C A(u)^{3/2}||u||_2
\end{equation}
and thus for any $k>0$,
\begin{equation}\label{fix}
\sup_{u \in A_k} E(u) \quad \mbox{ does not depend on } \lambda_1'.
\end{equation}
Also from \eqref{bounds} we have that
\begin{equation}
E(u) \geq \frac{A(u)}{2} +  \frac{\lambda_1'}{2} ||u||^4   \geq \frac{A(u)}{2}  + \lambda_1' C A(u)^{3/2}||u||_2
\end{equation}
and then a direct calculation shows that
\begin{equation}\label{limit}
\sup_{k>0}\inf_{u \in C_k}E(u) \to + \infty
\end{equation}
as $\lambda_1' \to 0^-$. Now fix a $v \in S(c)$ with $A(v)=1$. We have
$$ E_a(v) \leq \sup_{u \in A_1}E(u) + \frac{a^2}{2}D(v).$$
From \eqref{fix} and \eqref{limit} we deduce that, for any $a>0$,
$$E_a(v) < \sup_{k>0}\inf_{u \in C_k}E(u)$$
if $|\lambda_1'|$ is sufficiently small. Arguing as in Step 1 of the Proof of Theorem \ref{thm: mainn} this proves Point (2).
\end{proof}

\section{Appendix}

In Step 3 of the proof of Theorem \ref{thm: mainn} we have assumed that $E_a(u)$ constrained to $S(c)$ possesses a bounded Palais-Smale sequence at the level $\gamma_a(c)$. We now prove that it is indeed the case and we also derive additional properties of this sequence. \medskip


\begin{lem}\label{lm2}
For any fixed $c> 0$ and any $a \in (0,a_0]$ there exists a sequence $(u_n)\subset S(c)$ and a sequence $(v_n)\subset \Sigma$ of real, non negative functions such that
\begin{equation}\label{SPSC}
\left\{
\begin{array}{l}
E_a(u_n)\to \gamma_c(c)>0,\\
\|E'_a|_{S(c)}(u_n)\|_{\Sigma^{-1}}\to 0,\\
Q_a(u_n)\to 0,\\
\|u_n-v_n\|_{\Sigma}\to 0,\\
\end{array}
\right.
\end{equation}
as $n\to \infty$. Here $\Sigma^{-1}$ denotes the dual space of $\Sigma$.
\end{lem}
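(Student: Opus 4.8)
The plan is to obtain the sequence by a minimax argument for an \emph{augmented functional} on the enlarged manifold $\R\times S(c)$, using the scaling \eqref{def:sca} as an extra free variable so that the constraint $Q_a=0$ becomes a partial derivative. Writing $(s\star u)(x):=e^{\frac32 s}u(e^s x)$, so that by \eqref{def:mainsca} one has $A(s\star u)=e^{2s}A(u)$, $D(s\star u)=e^{-2s}D(u)$ and $B(s\star u)=e^{3s}B(u)$, I would set
$$\tilde E_a(s,u):=E_a(s\star u)=\frac{e^{2s}}{2}A(u)+\frac{a^2e^{-2s}}{2}D(u)+\frac{e^{3s}}{2}B(u).$$
The decisive feature is the identity $\partial_s\tilde E_a(s,u)=Q_a(s\star u)$, so that any point critical in the $s$--direction automatically lies on $\{Q_a=0\}$. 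Taking as reference the path \eqref{sparticulier} embedded as $\tilde g_0(t)=(0,g_0(t))$ with $g_0\in\Gamma_a(c)$, and projecting augmented paths back via $(s,u)\mapsto s\star u$ (which preserves endpoints, since there $s=0$, and maps into $\Gamma_a(c)$ with equal maximal value), one checks that the augmented minimax level equals $\gamma_a(c)>0$ and that $\tilde E_a$ inherits the mountain pass geometry.

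Next I would run the minimax for $\tilde E_a$. Applying Ekeland's variational principle on the complete path space with the sup--distance, and with a reference path whose $s$--component vanishes, yields near-optimal paths whose $s$--component is uniformly $o(1)$; their near-maximum points $(s_n,w_n)$ form a Palais--Smale sequence for $\tilde E_a$ with $|s_n|\to0$. Setting $u_n:=s_n\star w_n$, one reads off $E_a(u_n)=\tilde E_a(s_n,w_n)\to\gamma_a(c)$ and $Q_a(u_n)=\partial_s\tilde E_a(s_n,w_n)\to0$. Because $s_n$ is bounded, the scaling $\psi\mapsto s_n\star\psi$ is a uniformly bounded isomorphism of $\Sigma$ carrying $T_{w_n}S(c)$ onto $T_{u_n}S(c)$, so the $u$--component of the derivative transfers to $\|E_a'|_{S(c)}(u_n)\|_{\Sigma^{-1}}\to0$. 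Boundedness of $(u_n)$ in $\Sigma$ then follows from the identity $E_a(u_n)-\tfrac13 Q_a(u_n)=\tfrac16 A(u_n)+\tfrac56 a^2 D(u_n)$, which controls both $A(u_n)$ and $D(u_n)$.

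For the real, non-negative approximants, I would exploit that $A(|u|)\le A(u)$ while $D$, $\|\cdot\|_2$ and $B$ are all unchanged under $u\mapsto|u|$ (the last because the nonlinearity depends only on $|u|^2$); hence $\tilde E_a(s,|u|)\le\tilde E_a(s,u)$, and the reference paths may be taken with values in the closed cone $\mathcal P=\{v\in\Sigma:\ v\ge0\}$ (using that $u_c$ in \eqref{sparticulier} is non-negative by Lemma \ref{description}) without raising the minimax level. Carrying out the minimax through Ghoussoub's principle \cite{Gh}, localizing against $\mathcal P$, produces a Palais--Smale sequence additionally satisfying $\mathrm{dist}_\Sigma(u_n,\mathcal P)\to0$; choosing $v_n\in\mathcal P$ with $\|u_n-v_n\|_\Sigma\le \mathrm{dist}_\Sigma(u_n,\mathcal P)+\tfrac1n$ supplies the required real non-negative functions.

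The main obstacle is that, in contrast to the free case $a=0$, the analogue of Lemma \ref{lem:growth} fails: the term $-a^2D(u)$ destroys the monotonicity of $t\mapsto Q_a(u^t)$ and the clean characterization $\gamma_a(c)=\inf_{\{Q_a=0\}}E_a$, so one cannot simply invoke \cite[Theorem 4.1]{Gh} with $F=\{Q_a=0\}$ as was done in Step~3 of Theorem \ref{thm:standing}. The augmented functional circumvents this, but at the cost of running a single scheme that simultaneously keeps the Palais--Smale points near $\{Q_a=0\}$ through the $s$--direction and near the cone $\mathcal P$. The delicate point, on which both the derivative transfer and the $\Sigma$--boundedness of $(u_n)$ rest, is verifying that the near-optimal paths produced by Ekeland genuinely have $s$--component tending uniformly to $0$; this is the step I expect to require the most care.
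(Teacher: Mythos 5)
Your proposal is correct and follows essentially the same route as the paper's Appendix: the augmented functional $\widetilde E_a(u,s)=E_a(H(u,s))$ on $S(c)\times\R$ with $\partial_s\widetilde E_a=Q_a(H(u,s))$, equality of the augmented and original minimax levels, Ekeland's variational principle on near-optimal paths with vanishing $s$-component, transfer of the constrained derivative via the bounded scaling isomorphism, and boundedness from $E_a-\tfrac13Q_a=\tfrac16A+\tfrac56a^2D$. The only cosmetic difference is that the paper obtains the non-negative approximants $v_n$ directly as the nearest points $|g_n(t_n)|$ on the modulus of the reference path (using the Ekeland distance estimate), rather than via a separate localization against the cone.
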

From the definition of $E_a(u)$ and $Q_a(u)$ we get that
\begin{equation*}
E_a(u) - \frac{1}{3}Q_a(u) = \frac{1}{6}A(u)  + \frac{5}{6}a^2 D(u)
\end{equation*}
and thus we immediately deduce that the Palais-Smale sequence given by Lemma \ref{lm2} is bounded. Note also that since $||u_n - v_n||_{\Sigma} \to 0$, if we manage to show that $u_n \to u$ strongly in $\Sigma$ then the limit $u \in S(c)$ will be a real, non negative, function. \medskip

Roughly speaking what Lemma \ref{lm2} says is that it is possible to incorporate into the variational problem the information that any critical point must satisfy the constraint $Q_a(u)=0$. For previous works in that direction we refer to \cite{Je,JeLuWa}. Clearly it is possible to prove Step 3 of Theorem \ref{thm:standing} by this approach, but here we have choosen to use the approach of \cite{Gh} for its simplicity. The proof of the lemma is inspired from  \cite[Lemma 3.5]{JeLuWa}. Before proving
Lemma \ref{lm2} we need to introduce some notations and to prove some preliminary results. For any fixed $\mu>0$, we introduce the auxiliary functional
$$\widetilde{E}_{a}: S(c) \times \R \to \R,\qquad  (u, s)\mapsto E_a(H(u,s)),$$
where $H(u,s)(x):= e^{\frac{N}{2}s}u(e^s x)$, and the set of paths
\begin{align*}
\widetilde{\Gamma}_a(c):= \Big\{\widetilde{g} \in C([0,1], S(c) \times \R) :  \ \widetilde{g}(0)= (v^{t_1}, 0),\ \widetilde{g}(1)= (v^{t_2}, 0) \Big\},
\end{align*}
where $v^{t_1}, v^{t_2} \in S(c)$ are defined in the proof of Theorem \ref{thm: mainn} (remember that they are real non negative).
 Observe that setting
$$\widetilde{\gamma}_{a}(c):= \inf_{\widetilde{g}\in \widetilde{\Gamma}_a(c)}\max\limits_{t\in [0,1]}\widetilde{E}_a(\widetilde{g}(t)),$$
we have that
\begin{eqnarray}\label{ggamma(c)}
\widetilde{\gamma}_{a}(c) = \gamma_a(c).
\end{eqnarray}
Indeed, by the definitions of $\widetilde{\gamma}_{a}(c)$ and $\gamma_a(c)$, \eqref{ggamma(c)} follows immediately from the observation that the maps
$$\varphi: \Gamma_a(c) \longrightarrow \widetilde{\Gamma}_a(c),\ g \longmapsto \varphi(g):=(g,0),$$
and
$$\psi: \widetilde{\Gamma}_a(c) \longrightarrow \Gamma_a(c),\ \widetilde{g} \longmapsto \psi(\widetilde{g}):=H \circ \widetilde{g},$$
satisfy
$$\widetilde{E}_a(\varphi(g)) = E_a(g)\ \mbox{ and }\ E_a(\psi(\widetilde{g})) = \widetilde{E}_a(\widetilde{g}).$$

In the proof of Lemma \ref{lm2}, the lemma below  which has been established by the Ekeland variational principle in \cite[Lemma 2.3]{Je} is used. Hereinafter we denote by $X$ the set $ \Sigma \times \R$ equipped with the norm $\|\cdot\|_X^2 = \|\cdot\|_{\Sigma}^2 + |\cdot|_{\R}^2$ and denote by $X^{-1}$ its dual space.

\begin{lem}\label{lm-ekeland}
Let $\varepsilon>0$. Suppose that $\widetilde{g}_0 \in \widetilde{\Gamma}_a(c)$ satisfies
$$\max\limits_{t\in [0,1]}\widetilde{E}_a(\widetilde{g}_0(t))\leq \widetilde{\gamma }_{a}(c)+ \varepsilon.$$
Then there exists a pair of $(u_0, s_0)\in S(c) \times \R$ such that:
\begin{itemize}
  \item [(1)] $\widetilde{E}_a(u_0,s_0) \in [\widetilde{\gamma}_{a}(c)- \varepsilon, \widetilde{\gamma}_{a}(c) + \varepsilon]$;
  \item [(2)] $\min\limits_{t\in [0,1]} \| (u_0,s_0)-\widetilde{g}_0(t) \|_X \leq \sqrt{\varepsilon}$;
  \item [(3)] $\| \widetilde{E}_a'|_{S(c) \times \R}(u_0,s_0)  \|_{X^{-1}}\leq 2\sqrt{\varepsilon}$,\ i.e.
$$|\langle \widetilde{E}_a'(u_0, s_0), z \rangle_{X^{-1}\times X}  |\leq 2\sqrt{\varepsilon}\left \| z \right \|_X,$$ holds for all $z \in \widetilde{T}_{(u_0,s_0)}:= \{(z_1,z_2) \in X, \langle u_0, z_1\rangle_2=0\}$.
\end{itemize}
\end{lem}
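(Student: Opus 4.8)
The plan is to recognise Lemma~\ref{lm-ekeland} as an instance of a standard minimax principle in which Ekeland's variational principle is applied on the space of paths rather than on $S(c)\times\R$ directly, and then to localise a near-critical point along an almost-optimal path by a quantitative deformation argument. First I would equip the path space $\widetilde{\Gamma}_a(c)$ with the uniform metric $d(\widetilde{g},\widetilde{h}) = \max_{t\in[0,1]}\|\widetilde{g}(t)-\widetilde{h}(t)\|_X$. Since $S(c)$ is a closed subset of $\Sigma$ (the constraint $\|\cdot\|_2^2=c$ being closed because $\Sigma \hookrightarrow L^2$ continuously) and the endpoints are fixed, $(\widetilde{\Gamma}_a(c),d)$ is a complete metric space. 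On it I define $\Phi(\widetilde{g}):=\max_{t\in[0,1]}\widetilde{E}_a(\widetilde{g}(t))$, which is continuous because $\widetilde{E}_a$ is $C^1$ on $S(c)\times\R$ and $[0,1]$ is compact, and which is bounded below with $\inf_{\widetilde{\Gamma}_a(c)}\Phi=\widetilde{\gamma}_a(c)$ by definition. Applying Ekeland's variational principle to $\Phi$ with parameter $\varepsilon$ and the approximate minimiser $\widetilde{g}_0$ produces a path $\widetilde{g}_\varepsilon\in\widetilde{\Gamma}_a(c)$ satisfying $\Phi(\widetilde{g}_\varepsilon)\le\Phi(\widetilde{g}_0)\le\widetilde{\gamma}_a(c)+\varepsilon$, the bound $d(\widetilde{g}_\varepsilon,\widetilde{g}_0)\le\sqrt{\varepsilon}$, and the almost-minimality inequality $\Phi(\widetilde{g}_\varepsilon)\le\Phi(\widetilde{g})+\sqrt{\varepsilon}\,d(\widetilde{g},\widetilde{g}_\varepsilon)$ for every $\widetilde{g}\in\widetilde{\Gamma}_a(c)$.

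The conclusion would then be reached by exhibiting a maximum point of $\widetilde{g}_\varepsilon$ at which (1)--(3) hold. Setting $(u_0,s_0)=\widetilde{g}_\varepsilon(t_\varepsilon)$ for a suitable $t_\varepsilon$ in the maximum set $M_\varepsilon:=\{t:\widetilde{E}_a(\widetilde{g}_\varepsilon(t))=\Phi(\widetilde{g}_\varepsilon)\}$, item (1) is immediate since $\widetilde{\gamma}_a(c)\le\Phi(\widetilde{g}_\varepsilon)\le\widetilde{\gamma}_a(c)+\varepsilon$, and item (2) follows from $d(\widetilde{g}_\varepsilon,\widetilde{g}_0)\le\sqrt{\varepsilon}$ because $\min_t\|(u_0,s_0)-\widetilde{g}_0(t)\|_X\le\|\widetilde{g}_\varepsilon(t_\varepsilon)-\widetilde{g}_0(t_\varepsilon)\|_X$. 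The content is therefore entirely in item (3), the estimate on the constrained derivative.

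To obtain (3) I would argue by contradiction, assuming that at every $t\in M_\varepsilon$ one has $\|\widetilde{E}_a'|_{S(c)\times\R}(\widetilde{g}_\varepsilon(t))\|_{X^{-1}}>2\sqrt{\varepsilon}$. By continuity of $\widetilde{E}_a'$ this persists on a neighbourhood of $M_\varepsilon$ in the manifold $S(c)\times\R$, on which I would build a locally Lipschitz pseudo-gradient vector field $W$ tangent to the constraint, valued in the tangent spaces $\widetilde{T}_{(u,s)}$, normalised by $\|W\|_X\le 1$ and satisfying $\langle\widetilde{E}_a'(u,s),W(u,s)\rangle>2\sqrt{\varepsilon}$. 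After multiplying $W$ by a cut-off $\eta$ equal to $1$ near $M_\varepsilon$ and supported away from the endpoints and from the sublevel set $\{\widetilde{E}_a\le\Phi(\widetilde{g}_\varepsilon)-\rho\}$ for small $\rho>0$, the associated flow yields a deformation $\theta_\sigma$ of $S(c)\times\R$; the endpoints of $\widetilde{g}_\varepsilon$ are left fixed because their energy is strictly below $\widetilde{\gamma}_a(c)$ and hence below the maximum level, so that $\widetilde{g}_\varepsilon^{\,\sigma}:=\theta_\sigma\circ\widetilde{g}_\varepsilon$ remains in $\widetilde{\Gamma}_a(c)$.

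Finally, the usual quantitative deformation estimate would give, for $\sigma>0$ small, a displacement $d(\widetilde{g}_\varepsilon^{\,\sigma},\widetilde{g}_\varepsilon)\le\sigma$ together with an energy drop $\Phi(\widetilde{g}_\varepsilon^{\,\sigma})\le\Phi(\widetilde{g}_\varepsilon)-\tfrac{3}{2}\sqrt{\varepsilon}\,\sigma$ along the portion of the path near the maximum set, while the maximum over the rest of $[0,1]$ stays below $\Phi(\widetilde{g}_\varepsilon)$ for $\sigma$ small by compactness and continuity. This contradicts the Ekeland inequality $\Phi(\widetilde{g}_\varepsilon)\le\Phi(\widetilde{g}_\varepsilon^{\,\sigma})+\sqrt{\varepsilon}\,d(\widetilde{g}_\varepsilon^{\,\sigma},\widetilde{g}_\varepsilon)$, so some $t_\varepsilon\in M_\varepsilon$ must satisfy the gradient bound in (3). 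I expect the main obstacle to be precisely this deformation step: constructing the tangent pseudo-gradient on the product constraint manifold $S(c)\times\R$ and controlling how the maximum of $\widetilde{E}_a$ along the deformed path moves, so that the localised energy decrease strictly beats the Ekeland penalty $\sqrt{\varepsilon}\,d$. Everything else is bookkeeping.
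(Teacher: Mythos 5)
Your argument is correct and is essentially the proof the paper relies on: the paper does not prove Lemma \ref{lm-ekeland} itself but refers to \cite[Lemma 2.3]{Je}, where the result is established exactly along the lines you propose, namely by applying Ekeland's variational principle to the complete metric space of paths endowed with the uniform metric and then extracting a near-critical point from the maximum set of the almost-minimal path via a tangent pseudo-gradient deformation that fixes the endpoints (whose energy lies strictly below the minimax level $\widetilde{\gamma}_a(c)$). The only point deserving extra care, which you correctly single out, is the construction of the deformation on the product manifold $S(c)\times\R$ so that the flow preserves the $L^2$-constraint while the energy decrease near the maximum set strictly beats the Ekeland penalty.
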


\begin{proof}[Proof of Lemma \ref{lm2}] For each $n\in \N$, by the definition of $\gamma_a(c)$, there exists a $g_n\in \Gamma_a(c)$ such that
$$\max\limits_{t\in [0,1]}E_a(g_n(t))\leq \gamma_a(c) + \frac{1}{n}.$$
Observe that $|g_n| \in \Gamma_a(c)$ and because $E_a(|u|) \leq E_a(u)$ for all $ \in \Sigma$ we have 
$$\max_{t\in [0,1]}E_a(|g_n(t)|)\leq \max_{t\in [0,1]}E_a(g_n(t)).$$
Since $\widetilde{\gamma}_{a}(c)=\gamma_a(c)$, then for each $n\in \N$, $\widetilde{g}_n:=(|g_n|, 0)\in \widetilde{\Gamma}_a(c)$  satisfies
$$\max\limits_{t\in [0,1]}\widetilde{E}_a(\widetilde{g}_n(t))\leq \widetilde{\gamma}_{a}(c) + \frac{1}{n}.$$

Thus applying Lemma \ref{lm-ekeland}, we obtain a sequence $\{(w_n,s_n)\}\subset S(c) \times \R$ such that:
\begin{itemize}
  \item [(i)] $\widetilde{E}_a(w_n, s_n) \in [\gamma_a(c)-\frac{1}{n}, \gamma_a(c)+\frac{1}{n}]$;
  \item [(ii)] $\min\limits_{t\in [0,1]} \| (w_n, s_n)-(|g_n(t)|, 0) \|_X \leq\frac{1}{\sqrt{n}}$;
  \item [(iii)] $\| \widetilde{E}_a'|_{S_(c) \times \R}(w_n, s_n)  \|_{X^{-1}}\leq \frac{2}{\sqrt{n}}$.
\end{itemize}

For each $n\in \N$, let $t_n\in [0,1]$ be such that the minimum in (ii) is reached. We claim that setting  $u_n:=H(w_n, s_n)$ and $v_n:= |g_n(t_n)|$ the corresponding sequences  satisfy \eqref{SPSC}. Indeed, first, from (i) we have that $E_a(u_n) \to \gamma_a(c)$, since $E_a(u_n)=E_a(H(w_n, s_n))=\widetilde{E}_a(w_n, s_n).$
Secondly, by simple calculations, we have that
$$Q_a(u_n) = \langle \widetilde{E}_a'(w_n, s_n), (0,1) \rangle_{X^{-1}\times X},$$
and $(0,1)\in \widetilde{T}_{(w_n, s_n)}$. Thus (iii) yields that $Q_a(u_n) \to 0$. To verify that $\|E'_{a}|_{S(c)}(u_n)\|_{\Sigma^{-1}}\to 0$, it suffices to prove, for $n\in \N$ sufficiently large, that
\begin{eqnarray}\label{F'u}
|\langle E_a'(u_n), \phi\rangle_{\Sigma^{-1}\times \Sigma}  |\leq \frac{4}{\sqrt{n}}\left \| \phi \right \|_\Sigma,\ \mbox{ for all }\ \phi \in T_{u_n},
\end{eqnarray}
where $T_{u_n}:=\{\phi \in \Sigma,\  \langle u_n,\phi\rangle_{2}=0\}$. To this end, we note that, for each $\phi\in T_{u_n}$, setting $\widetilde{\phi}=H(\phi, -s_n)$, one has by direct calculations that
$$
\langle E'_{a}(u_n), \phi \rangle_{\Sigma^{\ast}\times \Sigma} =\ \langle \widetilde{E}_a'(w_n, s_n), (\widetilde{\phi}, 0)\rangle_{X^{-1}\times X}.
$$
If $(\widetilde{\phi},0)\in \widetilde{T}_{(w_n,s_n)}$ and $\|(\widetilde{\phi},0)\|_X^2\leq 4\|\phi\|_\Sigma^2$ for $n\in \N$ sufficiently large, then from (iii) we conclude that \eqref{F'u} holds. To check this claim one may observe that 
$(\widetilde{\phi},0)\in \widetilde{T}_{(w_n,s_n)} \Leftrightarrow \phi \in T_{u_n}$, and that from (ii) we have
\begin{eqnarray}\label{value-s_n}
|s_n|=|s_n-0|\leq \min\limits_{t\in [0,1]}\|(w_n, s_n)-(|g_n(t)|, 0)\|_X\leq \frac{1}{\sqrt{n}},
\end{eqnarray}
by which we deduce that
\begin{eqnarray*}
\|(\widetilde{\phi},0)\|_X^2 &=& \|\widetilde{\phi}\|_\Sigma^2 = ||\phi||_2^2 + e^{-2s_n} ||\nabla \phi||_2^2 + e^{2s_n} |||x| \phi||_2^2\\
& \leq & 2 \ \|\phi\|_{\Sigma}^2,
\end{eqnarray*}
holds for $n\in \N$ large enough. Thus \eqref{F'u} has been proved. Finally, since  $\| (w_n, s_n)-(v_n, 0) \|_X \to 0$ we have in particular that 
$\| w_n - v_n \|_\Sigma \to 0.$
Thus from \eqref{value-s_n} and since
$$\| u_n - v_n \|_\Sigma = \| H(w_n, s_n) - v_n \|_\Sigma \leq \| H(w_n, s_n) - w_n\|_\Sigma + \|w_n- v_n \|_\Sigma,$$
we conclude that $\| u_n - v_n \|_\Sigma \to 0$ as $n\to \infty$. At this point, the proof of the lemma is complete.
\end{proof}


\end{document}